\newcommand\coolover[2]{\mathrlap{\smash{\overbrace{\phantom{%
    \begin{matrix} #2 \end{matrix}}}^{\mbox{$#1$}}}}#2}
\newcommand\coolunder[2]{\mathrlap{\smash{\underbrace{\phantom{%
    \begin{matrix} #2 \end{matrix}}}_{\mbox{$#1$}}}}#2}
\newcommand\coolleftbrace[2]{%
#1\left\{\vphantom{\begin{matrix} #2 \end{matrix}}\right.}
\newcommand{\R}{\mathbb{R}}
\newcommand{\J}{\mathbb{J}}
\newcommand{\Q}{\mathbb{Q}}
\newcommand{\Z}{\mathbb{Z}}
\newcommand{\s}{\mathcal{S}}
\newcommand{\Disk}{\mathcal{D}\mathsf{isk}}
\newcommand{\p}{\mathfrak{p}}
\newcommand{\tZ}{{\widetilde{Z}}}
\newcommand{\tZU}{{\widetilde{Z}/U}}
\newcommand{\sd}{{SD}}
\newcommand{\id}{{\rm id}}
\renewcommand{\H}{\mathcal{H}}
\renewcommand{\S}{\mathbb{S}}
\newcommand{\sS}{\mathscr{S}}
\newcommand{\h}{\mathcal{H}}
\newcommand{\wh}{{\rm Wh}}
\newcommand{\diff}{{\rm Diff}}
\newcommand{\bdiff}{{\rm BDiff}}
\newcommand{\btop}{{\rm BTop}}
\newcommand{\bo}{{\rm BO}}
\newcommand{\Th}{{\rm Th}}
\DeclareMathOperator*{\holim}{\sf holim}
\DeclareMathOperator{\disk}{\sf Disk}
\DeclareMathOperator{\tr}{\sf tr}
\DeclareMathOperator{\ind}{\sf Ind}
\theoremstyle{plain}
\newtheorem{theorem}{Theorem}
\newtheorem*{theorem*}{Theorem}
\newtheorem*{maintheorem*}{Main Theorem}
\newtheorem*{dualitytheorem*}{Duality Theorem}
\newtheorem*{vanishingtheorem*}{Vanishing Theorem}
\newcounter{lemma}[section]
\theoremstyle{definition}\newtheorem{lemma}[lemma]{Lemma}
\theoremstyle{plain}\newtheorem{numberedtheorem}[lemma]{Theorem}
\theoremstyle{plain}\newtheorem{corollary}[lemma]{Corollary}
\theoremstyle{plain}
\theoremstyle{definition}\newtheorem{example}[lemma]{Example}
\theoremstyle{definition}\newtheorem{prop}[lemma]{Proposition}
\theoremstyle{definition}\newtheorem{rmk}[lemma]{Remark}
\theoremstyle{definition}\newtheorem{defn}[lemma]{Definition}
\theoremstyle{definition}
\theoremstyle{definition}
\title{Duality and Vanishing Theorems for Topologically Trivial Families of Smooth h-cobordisms}
\author{Yajit Jain}
\thanks{The author is supported by NSF Grant No. DMS-2103276.
}
\email{yajit\_jain@brown.edu}
\address{Department of Mathematics, Brown University}
\date{\today}
\begin{document}
\maketitle
\begin{abstract}
Using the work of Dwyer, Weiss, and Williams we associate an invariant to any topologically trivial family of smooth h-cobordisms. This invariant is called the smooth structure class, and is closely related to the higher Franz–Reidemeister torsion of Igusa and Klein. We compute the smooth structure class in terms of a fiberwise generalized Morse function using fiberwise Poincar\'e–Hopf theory. This computation gives rise to a duality theorem for the smooth structure class that generalizes Milnor's duality theorem for the Whitehead torsion. From this result we deduce a vanishing theorem that implies the Rigidity Conjecture of Goette and Igusa. This conjecture states that, after rationalizing, there are no stable exotic smoothings of manifold bundles with closed even dimensional fibers. 
\end{abstract}

\tableofcontents

\section{Introduction}

Recall the h-cobordism theorem of Smale and the s-cobordism theorem of Barden, Mazur, and Stallings. For a connected smooth manifold $M$ of dimension $d\ge 5$ these assert that the Whitehead torsion gives a bijection 
\begin{align*}
\{\text{h-cobordisms on $M$}\}/\text{diffeo.}&\xrightarrow{\cong} \wh_1(\pi_1(M))\\
[W]&\mapsto\tau(W,M)
\end{align*}
between h-cobordisms on $M$ up to diffeomorphism and the Whitehead group. 

\medskip

The stable parametrized h-cobordism theorem~\cite{WJR13} is a generalization of this classical result into the setting of infinite loop spaces. It provides a homotopy equivalence between the stable parametrized smooth h-cobordism space and the loopspace of the smooth Whitehead space:
$$
\h(M)\simeq \Omega\wh^\diff(M)
$$
Smooth manifold bundles over a base $B$ whose fibers are h-cobordisms are classified by maps from $B$ into $\h(M)$. 
Using the homotopy equivalence above, we can associate to such bundles a map $B\to \Omega\wh^\diff(M)$, which we refer to as a \textit{higher torsion invariant}. 
Using the fact that $\pi_0\wh^\diff(M)\cong\wh_1(\pi_1(M))$, we see that when $B=*$, the higher torsion invariant is equivalent to the Whitehead torsion. 
One can also construct higher torsion invariants for bundles with closed acyclic fibers, or under  conditions on the monodromy action on the fiber homology. 

\smallskip

In this paper we are concerned with \textit{topologically trivial families of smooth h-cobordisms}. Such a family is defined to be a bundle of smooth h-cobordisms that is fiberwise homeomorphic, but not necessarily fiberwise diffeomorphic, to a bundle of product h-cobordisms. 
It follows from work of Burghelea and Lashof~\cite{BL77} (in the unparameterized setting), as well as Dwyer, Weiss, and Williams~\cite{DWW03} (in the parameterized setting) that there is a specialization of the stable parameterized h-cobordism theorem to such h-cobordism bundles. 
In particular, for a smooth manifold bundle $p:M\to B$, there is a homotopy equivalence
\begin{align*}
\h_B^{t/d}(M)&\xrightarrow{\simeq} \Gamma_B\h^\%_B(M)
\end{align*}
between the space of topologically trivial families of smooth h-cobordisms on $M$ and the space of sections of the parameterized ring-spectrum obtained by taking a fiberwise smash product of $M$ with the stable parameterized h-cobordism space of a point (see \cite{GIW14} for details). 
Given a topologically trivial family of smooth h-cobordisms $W$, the homotopy equivalence gives rise to an invariant $\theta(W,M)\in \Gamma_B\h^\%_B(M)$ which we refer to as the \textit{smooth structure characteristic}, and its associated rational component $\Theta(W,M)\in\pi_0\Gamma_B\h_B^\%(M)\otimes\Q$ is the \textit{smooth structure class}. 
The smooth structure characteristic is a variation on the higher torsion invariant from above, and is closely related to the higher Franz–Reidemeister torsion of Igusa and Klein, as well as to the higher smooth torsion of Dwyer, Weiss, and Williams.

\smallskip

We prove two main theorems in this paper. The first is a duality theorem for the smooth structure class which generalizes Milnor's duality theorem for Whitehead torsion to the setting of topologically trivial families of smooth h-cobordisms. The second is a vanishing theorem that resolves the Rigidity Conjecture of Goette and Igusa~\cite{GI14} (see Subsection~\ref{subsec:comparison_to_GIW} for details).

\begin{dualitytheorem*}[Theorem~\ref{thm:duality}]
For $p:W\to B$ a topologically trivial bundle of smooth h-cobordisms with fiber dimension $n$,
$$
\Theta(W,\partial_0W) = (-1)^{n-1}\Theta(W,\partial_1W)
$$
\end{dualitytheorem*}
\begin{vanishingtheorem*}[Theorem~\ref{thm:rigidity}]
If the fibers of $p_0:M\to B$ are even dimensional and closed, then for any topologically trivial family of smooth h-cobordisms $p:W\to B$ from $M$ to $M'$, $\Theta(W,M)-\Theta(W,M')$ is trivial. 
\end{vanishingtheorem*}

\medskip

In the remainder of this introduction, we contextualize the duality and vanishing theorems above as manifestations of similar theorems for related invariants including the Euler characteristic, Reidemeister torsion, the Becker–Gottlieb transfer, and the higher Franz–Reidemeister torsion. Vanishing theorems for each of these invariants are proven in stylistically equivalent ways, though the technical ingredients vary widely. They generally follow by applying a Poincar\'e–Hopf type theorem in combination with Poincar\'e duality. In order to motivate the contents of this paper, we will briefly describe these theorems and their proofs. This discussion is summarized in the table below. At the end of this section we summarize the proof of our main theorem and give an outline of the contents of this paper.

\begin{table}[h]
\centering
\begin{adjustbox}{width=\columnwidth,center}
\begin{tabular}{ | p{2cm} | p{4cm} | p{4cm} | p{4cm} | p{4cm} | p{4cm} |}
\hline
 & Euler characteristic &  Reidemeister torsion &Becker--Gottlieb transfer & higher torsion $\tau^{IK}$& smooth structure class $\Theta$ \\ \hline
Poincar\'e--Hopf theorem &

$\chi(M)=\displaystyle\sum_{z\in Z}(-1)^{\ind_X(z)}$ &
Compute torsion from explicit choice of cellular decomposition&
 Fiberwise Poincar\'e–Hopf theorem&
Framing Principle &
Theorem~\ref{thm:PHTheta}
\\ \hline
Poincar\'e duality &
$\ind_X(z)=-\ind_{-X}(z)$ for odd dimensional manifolds &
Recompute for dual chain complex&
Recompute fiberwise index map after negating Morse function& 
Apply the framing principle for a fiberwise GMF $f$ and compare with results for $-f$&
Duality Theorem --- Theorem~\ref{thm:duality}
\\ \hline

$\Rightarrow$Vanishing result  &
$\chi(M)=0$ when $\dim M$ is odd &
 Torsion is trivial on even dimensional manifolds&
 transfer map on real cohomology vanishes for odd dimensional fibers &
 higher torsion of even dimensional manifold bundles is MMM class&
Vanishing Theorem --- Theorem~\ref{thm:rigidity}
\\ \hline

\end{tabular}
\end{adjustbox}
\end{table}

 Recall the classical proof that the Euler characteristic of a closed odd dimensional manifold is zero. 
The Poincar\'e–Hopf theorem states that the Euler characteristic of a closed manifold $M$ is equal to the sum of the indices of isolated critical points of a Morse function $f$ on $M$:
$$
\chi(M)=\displaystyle\sum_{z\in Z}(-1)^{\ind_{\nabla f}(z)}
$$
On an odd dimensional manifold, ${\ind_{\nabla (-f)}(z)}$ and ${\ind_{\nabla f}(z)}$ have opposite parity.
It follows that $\chi(M)=0$. 

\smallskip

We adopt a stylized view of this proof: the vanishing result for the Euler characteristic is proven by applying the Poincar\'e–Hopf theorem in combination with Poincar\'e duality. 

\medskip

The Reidemeister torsion is a K-theoretic generalization of the Euler characteristic that admits an analogous vanishing theorem: the Reidemeister torsion of a closed even dimensional manifold is zero. To prove this, recall that the Reidemeister torsion of a manifold is computed in terms of an acyclic chain complex that can be obtained from a Morse function. We can compare the torsion of one chain complex to the torsion of the dual cell decomposition obtained by inverting the Morse function. The specific computation is due to Milnor~\cite{Mil62}, and when the dimension is even it follows that the torsion must be zero. Stylistically this proof is the same as the proof of the vanishing result for the Euler characteristic: the formula for torsion in terms of data obtained from the Morse function is an instance of a Poincar\'e–Hopf theorem. The comparison to the dual chain complex is an instance of Poincar\'e duality. A nearly identical argument is also used to prove a duality theorem for Whitehead torsion~\cite{Mil66}. 

\medskip

The Becker–Gottlieb transfer is a generalization of the Euler characteristic to families. For a smooth bundle of smooth manifolds $p:E\to B$, one can associate a wrong way map of spectra $\Sigma^\infty B_+ \to \Sigma^\infty E_+$. If the base is a point, this is equivalently a map of infinite loop spaces from $S^0$ to $\Omega^\infty\Sigma^\infty E_+$. On components,  if $E$ is connected, we have a map $S^0\to \Z$, and the non-basepoint element maps to $\chi(E)\in \Z$.

\smallskip

One can also prove a vanishing result for the Becker–Gottlieb transfer. Fiberwise Poincar\'e–Hopf theorems for the Becker–Gottlieb transfer have been proven by~\cite{BM76} and \cite{Dou06}. Briefly, assume that $X$ is a smooth nondegenerate vertical vector field on the total space of a smooth bundle $p:E\to B$. This vector field might be obtained by computing the gradient of a fiberwise Morse function, if such a function exists. Let $Z$ be the vanishing locus of the vector field, which forms a covering space $\pi:Z\to B$. Then the fiberwise Poincar\'e–Hopf theorem is expressed in terms of the following homotopy commutative diagram of spectra:

$$
\xymatrix{
\Sigma^\infty B_+ \ar[rr]^{{\tr}_p}\ar[dd]_{{\tr}_\pi}&&
\Sigma^\infty M_+\\
\\
 \Sigma^\infty Z_+
\ar[rr]^{{\ind}_X}&&
 \Sigma^\infty Z_+
\ar[uu]_{+}
}
$$

In the diagram above, $\tr_\pi$ and $\tr_p$ denote the transfers associated to $\pi$ and $p$. 
The map $\ind_X$ denotes a fiberwise index map associated to the vertical vector field $X$. 
On real cohomology one can prove that $(\ind_X)^* = (-1)^d(\ind_{-X})^*$, where $d$ denotes the fiber dimension of $M$. It follows that $(\tr_p)^* = (-1)^d (\tr_p)^*$. Thus the transfer map on cohomology vanishes when the fiber dimension is odd, if $p$ admits a smooth nondegenerate vertical vector field. In this case the classical Poincar\'e–Hopf theorem was replaced by a parametrized version, and Poincar\'e duality arose in the comparison of the vector field and its negative.

\medskip

A common generalization of the Euler characteristic to both the K-theoretic and parametrized settings is the higher Franz–Reidemeister torsion. This invariant is a characteristic class in the cohomology of the base of a smooth fiber bundle. The primary tool that enables computations of this invariant is Igusa's framing principle~\cite{Igu05}. The framing principle describes the higher Franz–Reidemeister torsion as the sum of an `exotic' class and a `tangential' term.

A consequence of the framing principle is that for smooth manifold bundles with closed even dimensional fibers, the torsion class is congruent to a Miller–Morita–Mumford class. To prove this, Igusa compares the framing principle for a fiberwise generalized Morse function $f$ to the analogous formula for $-f$. By studying the canonical involution on the Whitehead space, one can prove that the exotic term is two-torsion. Thus we are left only with the tangential term which agrees with a Miller–Morita–Mumford class. 

\smallskip

Once again, this proof is analogous to those from above: the framing principle resembles a Poincar\'e–Hopf theorem, and the comparison of the formulas for $f$ and $-f$ resembles an application of Poincar\'e duality. However, the proof of this vanishing theorem requires significantly more technology than those which came previously.  In particular, to define the exotic term in the framing principle, Igusa uses a Waldhausen category model for the Whitehead space which encodes the combinatorics of colliding critical points of fiberwise generalized Morse functions. The proof of the framing principle requires an understanding of the deformation properties of the critical loci of fiberwise generalized Morse functions. 

\medskip

We postpone giving a precise definition of the smooth structure class until Section~\ref{subsec:theta}, however we point out that the smooth structure class is closely related to the invariants discussed above. One explanation for this is that the higher Franz–Reidemeister torsion and the smooth structure class can both be defined in terms of nullhomotopies of maps that factor through the Becker–Gottlieb transfer. Given this relationship and the discussion above, one should expect that a proof of the vanishing theorem should follow from a sufficiently general version of a Poincar\'e–Hopf theorem along with an application of Poincar\'e duality. This paper provides such a proof, which is summarized in the next section.

\newpage
\subsection{Proof Summary}

In this paper we prove a vanishing result for the smooth structure class, an invariant of smooth structures on fiber bundles introduced by Goette, Igusa, and Williams in~\cite{GIW14}, after work of Dwyer, Weiss, and Williams~\cite{DWW03}. In analogy with the examples above, the proof is an application of a Poincar\'e–Hopf type theorem in combination with Poincar\'e duality. In this section we give an outline of the proof.

\medskip

By a fiberwise Poincar\'e–Hopf theorem we broadly mean a computation of a fiberwise characteristic, e.g. the Becker–Gottlieb transfer, the excisive A-theory Euler characteristic, etc., in terms of the critical locus of a fiberwise generalized Morse function. Examples of such theorems can be found in \cite{BM76,CJ98,Dou06}. These theorems generalize the classical Poincar\'e–Hopf theorem to the setting of smooth manifold bundles that admit fiberwise Morse functions. Unfortunately, bundles rarely admit fiberwise Morse functions, but by work of ~\cite{Igu84,Lur09,EM12}, smooth bundles always admit fiberwise generalized Morse functions. By working with such functions, Theorem~\ref{thm:PHQthy} generalizes existing fiberwise Poincar\'e–Hopf theorems to apply to all smooth manifold bundles. 

\medskip\noindent The following is a bullet-pointed outline of the proof of the main theorem. 

\begin{enumerate}\addtocounter{enumi}{-1}
    \item These background items are necessary for this outline: 
    \begin{itemize}
        \smallskip\item The smooth structure class $\Theta(W,M)$ is an element of $\pi_0\Gamma_B\h^\%_B(M)\otimes \Q$. The space $\Gamma_B\h_B^\%(M)$ is the space of sections of the fiberwise homology bundle obtained by taking fiberwise smash products with the stable h-cobordism space of a point. See Section~\ref{subsec:theta} for a precise definition. 
        \smallskip\item By the stable parametrized h-cobordism theorem, $\Gamma_B\h^\%_B(M)$ is the homotopy fiber of the map $\Gamma_B Q_B(M)\to \Gamma_B A^\%_B(M)$, which is induced by the unit map from the sphere spectrum to $A(*)$. The spectrum $A(*)$ is the Waldhausen K-theory of spaces functor, otherwise known as A-theory, evaluated at a point. The functor $A^\%$ is the excisive approximation to A-theory. 
        \smallskip\item All smooth bundles admit fiberwise generalized Morse functions by~\cite{Igu84,Lur09,EM12}. In stark contrast, smooth bundles rarely admit fiberwise Morse functions.
        \smallskip\item If a family of h-cobordisms $p:W\to B$ with boundaries $p_0:M_0\to B$ and $p_1:M_1\to B$ is \textit{topologically trivial}, then $W$ is fiberwise tangentially homeomorphic to $M_0\times I$. This data produces a nullhomotopy of the excisive A-theory Euler characteristic of $\chi^\%(W,\partial_0W)$, which is used to define the smooth structure characteristic $\theta(W,\partial_0W)$ in Definition \ref{defn:theta}. Likewise, we define the smooth structure characteristic $\theta(W,\partial_1W)$.
    \end{itemize}
    \smallskip\item We prove a fiberwise Poincar\'e–Hopf theorem for the Becker–Gottlieb transfer, an element of $\Gamma_B Q_B(W)$. This result, Theorem~\ref{thm:PHQthy}, expresses the transfer in terms of the critical locus of a fiberwise generalized Morse function on $W$. 
    \smallskip\item In Theorem~\ref{thm:PHAthy}, we further generalize the fiberwise Poincar\'e–Hopf theorem for the Becker–Gottlieb transfer to the excisive A-theory Euler characteristic, an element of $\Gamma_B A^\%_B(W)$. Furthermore, this factorization is compatible with the factorization of the Becker–Gottlieb transfer in the previous theorem. This is Theorem~\ref{thm:PHcube}.
    \smallskip\item We generalize Theorems \ref{thm:PHQthy}, \ref{thm:PHAthy}, and \ref{thm:PHcube} to be invariant under any \emph{stratified deformation} of the critical locus of a fiberwise generalized Morse function. This yields Theorems \ref{thm:PHHSD} and \ref{thm:PHcubeSD}.
    \smallskip\item Theorem~\ref{thm:PHTheta} gives a fiberwise Poincar\'e–Hopf theorem for the smooth structure characteristic. This theorem is different from those preceding it in that it gives a congruence between elements of a rational group as opposed to proving the homotopy commutativity of a diagram of commutative maps. This theorem relies on Theorem~\ref{thm:PHHSD} to express the smooth structure class in terms of a particular stratified deformation of the critical locus of a fiberwise generalized Morse function. The stratified deformation that we use encodes a parametrized handle cancellation argument used by Hatcher in~\cite{Hat75} and Igusa in~\cite{Igu84,Igu88,Igu02,Igu05}. We summarize this construction in Subsection \ref{subsec:strat_def}.
    \smallskip\item Theorem~\ref{thm:PHTheta} is used to prove a duality theorem for the smooth structure class, Theorem~\ref{thm:duality}, by inverting the Morse function. Theorem~\ref{thm:rigidity} follows from this duality theorem. 
\end{enumerate}

To complete the analogy in the exposition from the previous section, we will indicate how the proof of the vanishing theorem, Theorem~\ref{thm:rigidity}, can be thought of as an application of a Poincar\'e–Hopf type theorem and Poincar\'e duality. The Poincar\'e–Hopf type theorem that we ultimately apply is Theorem~\ref{thm:PHTheta}, and as the outline indicates, this is a generalization of other Poincar\'e–Hopf theorems that we prove along the way. The duality theorem for the smooth structure class, Theorem~\ref{thm:duality}, is an instance of Poincar\'e duality.

\subsection{Acknowledgements}

I wish to express my gratitude to John Francis for his patience and support and for many helpful conversations about this project. I owe a tremendous intellectual debt to Kiyoshi Igusa who has generously shared his ideas with me and whose work has provided much inspiration for my own. I also want to thank Oscar Randal-Williams for many helpful conversations, and especially for pointing out mistakes, conceptual and otherwise, in earlier versions of this work.

\smallskip

Special thanks to Sam Nariman for the many hours spent discussing manifold topology and early ideas about this project, and to Piotr Pstr{\c a}gowski for his support and for reading early drafts of this work. At various stages this project also benefitted from conversations with Mauricio Bustamante, Mannuel Krannich, and Wolfgang Steimle. I also wish to thank Chris Douglas and John Klein for helpful email correspondences.

\section{Related Work}
\label{sec:RelatedWork}

\subsection{Comparison to prior work of Goette, Igusa, and Williams}

\label{subsec:comparison_to_GIW}

This work grew out of the author's attempts to prove the Rigidity Conjecture of Goette and Igusa. A careful reader will notice that the statement of the vanishing theorem given here is different from the statement of the conjecture given in~\cite{GI14}. Furthermore, the papers~\cite{GIW14} and~\cite{GI14} are about exotic smoothings of manifold bundles, and do not make explicit use of topologically trivial families of smooth h-cobordisms. We will explain these differences below. 

In~\cite{GIW14}, Goette, Igusa, and Williams define an exotic smoothing of a manifold bundle $p:M\to B$ to be another manifold bundle $p':M'\to B$ along with a \textit{fiberwise tangential homeomorphism} from $M$ to $M'$. A fiberwise tangential homeomorphism is a fiberwise homeomorphism $h$ covered by an isomorphism of the vertical tangent bundles that is compatible with the topological derivative of $h$. They construct the moduli space of exotic smoothings of $p$, $\widetilde{\s}_B(M)$, and its stabilization with respect to linear disk bundles $\widetilde{\s}^s_B(M)$. The main smoothing theorem of their paper is a homotopy equivalence
\[
\widetilde{\s}^s_B(M)\simeq\Gamma_B\h_B^\%(M)
\]
between the stable space of exotic smoothings of $M$ and the sections of the parameterized ring-spectrum obtained by taking a fiberwise smash product of $M$ with the stable parameterized h-cobordism space of a point. 

In this paper, Corollary \ref{cor:DWWpullback} gives a homotopy equivalence between the space of topologically trivial families of smooth h-cobordisms on $M$, denoted $\h_B^{t/d}(M)$, and $\Gamma_B\h_B^\%(M)$. Thus, there is a homotopy equivalence between the spaces $\h_B^{t/d}(M)$ and $\widetilde{\s}^s_B(M)$. In a subsequent paper we will construct an explicit homotopy equivalence $\h_B^{t/d}(M)\to \widetilde{\s}^s_B(M)$. This homotopy equivalence allows us to interchange topologically trivial familes of smooth h-cobordisms with exotic smoothings of manifold bundles. 

\medskip

In~\cite{GI14}, the authors provide a construction called the \textit{immersed Hatcher construction}. 
From the perspective used in our paper, their construction uses Hatcher's construction of exotic disk bundles to produce topologically trivial families of smooth h-cobordisms on an arbitrary smooth manifold bundle $p:M\to B$. 
In~\cite{GI14}, the authors claim that the immersed Hatcher construction produces ``exotic smooth structures on smooth manifold bundles with closed fibers." 
As was pointed out by Oscar Randal-Williams to the author of this paper, exotic smooth structures on smooth manifold bundles with closed fibers do not exist. 
Indeed, if there is a fiberwise tangential homeomorphism between manifold bundles with closed fibers, then by smoothing theory these bundles are fiberwise diffeomorphic.
However, \textit{stable} exotic smooth structures do exist on manifold bundles with closed fibers, e.g. if $W$ is a topologically trivial family of smooth h-cobordisms from $M$ to $M'$ then $M\times I$ is an exotic smoothing of $M'\times I$. 

Unfortunately, the false assertion that exotic smoothings of manifold bundles with closed fibers exist causes the statements of some results in~\cite{GI14} to be false, and affects the proofs of other statements. These specific errors were also uncovered by Oscar Randal-Williams. 

Of particular concern to us is the statement of the Rigidity Conjecture in~\cite{GI14}, which is vacuously true. 
In the original statement the authors are comparing two smooth manifold bundles $p:M\to B$ and $p':M'\to B$ with closed even dimensional fibers so that $p'$ is an exotic smoothing of $p$, i.e. $M'$ is fiberwise tangentially homeomorphic to $M$. 
The conjecture asserts that the smooth structure class comparing the two bundles vanishes.
As Randal-Williams has pointed out, with the conditions above $M$ must be fiberwise diffeomorphic to $M'$, and so the smooth structure class must vanish. 

The statement of the vanishing theorem given in this paper is the most reasonable restatement of the Rigidity Conjecture of Goette and Igusa. 
It compares two manifold bundles $p:M\to B$ and $p':M'\to B$ with closed even dimensional fibers that arise as the boundaries of a topologically trivial family of smooth h-cobordisms. 
Given this setup, $M\times I$ is fiberwise tangentially homeomorphic to $M'\times I$, and so $M'\times I$ is an exotic smoothing of $M\times I$. 
Furthermore, we know from the homotopy equivalence above that all exotic smoothings arise in this way.
Now the two bundles $M\times I$ and $M'\times I$ are not necessarily fiberwise diffeomorphic, and so the smooth structure class is not obviously zero.

We wish to point out that the errors in~\cite{GI14} are contained within that publication and do not affect the results in~\cite{GIW14}. Furthermore the geometric constructions in~\cite{GI14} are sound, and the errors in that paper only pertain to the interpretation of those constructions and the computation of related invariants. Finally, the present paper is mathematically independent of ~\cite{GI14}. In a subsequent paper we will supply corrected proofs of the main theorems of~\cite{GI14}, using the perspective of topologically trivial families of smooth h-cobordisms.

\subsection{A survey of nearby literature}

Two main ideas arising in this paper, fiberwise Poincar\'e–Hopf theorems and the Hatcher construction, are of independent interest. In this section we give a brief survey of the literature and recent progress pertaining to both keywords. We omit a discussion of the most immediate literature pertaining to the Rigidity Conjecture, including~\cite{DWW03, GIW14, GI14}, as detailed descriptions of these works appear elsewhere in this paper.

\medskip

Hatcher's construction associates to an element of the kernel of the J-homomorphism a disk bundle which is fiberwise homeomorphic to a trivial disk bundle but not fiberwise diffeomorphic. The construction gives rise to a stable map from G/O to $\Omega\wh^\diff(*)$. Waldhausen gave a different formulation of the same map in~\cite{Wal82}, and in~\cite{Bok84} Bokstedt proved that this map is a rational homotopy equivalence. Exciting new developments by Kragh~\cite{Kra18} have identified the homotopy fiber of the Hatcher--Waldhausen map as a certain functional space $\mathcal{M}_\infty$ considered by Eliashberg and Gromov in~\cite{EG98}, establishing a connection between the study of Lagrangian submanifolds to algebraic K-theory. Kragh associates to every exact Lagrangian of $T^*D^n$ which agrees with the zero section at $\partial D^n$ an element of $\pi_*(\mathcal{M}_\infty)$. If any of these examples were proven to be nontrivial, they would be counterexamples to the nearby Lagrangian conjecture in symplectic geometry. In essence, counter examples to the nearby Lagrangian conjecture might be found in the kernel of Hatcher's construction. Related work by Igusa and \'Alvarez-Gavela produces torsion invariants for Legendrian submanifolds of Euler type~\cite{GI19}.

\medskip

Goodwillie, Igusa, and Ohrt have developed an equivariant version of Hatcher's construction~\cite{GIO15}. Ordinarily, the space $G/O$ classifies vector bundles whose spherical fibrations are fiber homotopy trivial. In the equivariant version, $G/O$ is replaced by the space $G_n/U$, which is the colimit of spaces $G_n(N)/U(N)$, classifying rank $N$ complex vector bundles together with a $C_n$-equivariant fiber homotopy trivialization of the associated sphere bundle. The equivariant Hatcher construction is then a map $G_n/U\to \h^s(BC_n)$, where the target is the stable h-cobordism space of the classifying space of $C_n$. The geometric outcome of the construction is no longer a disk bundle, but instead a bundle of h-cobordisms on the product of a disk with lens spaces.

\medskip

Bunke and Gepner have reformulated the Becker--Gottlieb transfer in the context of derived algebraic K-theory~\cite{BG13}. Their work leads to the Transfer Index Conjecture, essentially a derived version of the parametrized index theorem of Dwyer, Weiss, and Williams. This conjecture suggests as a corollary the existence of certain classes in algebraic K-theory of a ring of integers in a number field. The authors prove that the Hatcher construction produces nontrivial representatives for these classes in special cases.

\medskip

The Farell--Hsiang~\cite{FH78} results on diffeomorphism groups of disks relative to their boundary prove that in the pseudoisotopy stable range,

\begin{equation}
\pi_i\bdiff_\partial(D^n)\otimes\Q=\left\{
\begin{aligned}
    \Q \ && \text{if $i=0\mod{4}$ and $n$ odd}\\ 
     0 \ && \text{otherwise}
\end{aligned}
\right.
\end{equation}
According to~\cite{Igu02}, the nontrivial generator for odd dimensional disks can be obtained from Hatcher's construction. There has been much recent progress outside of the stable range by a number of authors, including distinct but related work by Kupers, Randal-Williams, Watanabe, and Weiss. In~\cite{Wei15} Weiss identifies nontrivial Pontryagin classes $p_{n+k}\in H^{4n+4k}(\btop(2n);\Q)$. These are shown to evaluate nontrivially on $\pi_{4n+4k}(\btop(2n);\Q)$. It then follows by the Morlet equivalence that there must be nonzero rational homotopy in $\pi_*(\bdiff_\partial(D^{2n}))$ outside of the stable range. It would be interesting to determine whether these classes could be used to produce unstable exotic smoothings of manifold bundles. 

\medskip

Fiberwise Poincar\'e--Hopf theorems first appeared in work by Brumfiel and Madsen~\cite{BM76}, and have proven to be a useful computational tool with many applications. For instance, in~\cite{MT01} the authors apply the theorem towards early progress on the Mumford conjecture. Similar theorems are used in~\cite{ORW08} to compute the $\Z/2$ homology of the stable nonorientable mapping class group, as well as in~\cite{Rei19} to establish the existence of non-kinetic smooth bundles over the classifying space $BSU(2)$. Douglas~\cite{Dou06} gave alternative proofs to ~\cite{BM76} using Dold's Euclidean neighborhood rectracts. Both of these papers establish fiberwise Poincar\'e–Hopf theorems for smooth manifold bundles admitting fiberwise Morse functions, and provided motivation for the present work. Our results provide fiberwise Poincar\'e--Hopf theorems for bundles that admit fiberwise generalized Morse functions. This is a significant strengthening, as all bundles admit fiberwise generalized Morse functions, but bundles rarely admit fiberwise Morse functions.  

\medskip

This paper is concerned with a characteristic in the homotopy fiber of the fibration

$$\Gamma_B\h^\%_B(M)\to \Gamma_BQ_B(M)\to \Gamma_B A^\%_B(M)$$

\noindent that arises from a nullhomotopy of the excisive A-theory Euler characteristic. One could naturally ask about characteristics in the homotopy fiber of the fibration

$$\Gamma_B\Omega\wh^\text{PL}_B(M)\to \Gamma_BA^\%_B(M)\to \Gamma_B A_B(M)$$

\noindent given by a nullhomotopy of the ordinary A-theory characteristic. This is the premise of~\cite{Ste10}, in which the author studies the `parametrized excisive characteristic'. One of the main technical results of their work is an additivity theorem for the parametrized excisive characteristic, which is related to the Poincar\'e--Hopf theorems in the present work when they are restricted to bundles admitting fiberwise Morse functions. For comparison, the smooth structure class appearing in $\Gamma_B\h^\%_B(M)$ concerns the existence of stable exotic smoothings of fiber bundles, whereas the parametrized excisive characteristic appearing in $\Gamma_B\Omega\wh^\text{PL}_B(M)$ concerns the existence of topological manifold bundles whose projection maps are homotopic to stabilizations of a map of compact topological manifolds.

\section{Characteristics associated with topologically trivial families of h-cobordisms}
\label{sec:2}

In this section we formally introduce topologically trivial families of smooth h-cobordisms, with the goal of defining the smooth structure characteristic. 
The content of this section is used in Subsection~\ref{subsec:PHTheta} and Section~\ref{sec:ThetaCalc}, where we apply the theorems of Section~\ref{sec:PHthms} to topologically trivial families of smooth h-cobordisms.
We begin in Subsection~\ref{subsec:top_triv_families} with the definition of a topologically trivial family of smooth h-cobordisms, and introduce the immersed Hatcher construction as the main example of such a bundle. In Subsection~\ref{subsec:moduli_hcobs}, we identify the moduli space of topologically trivial h-cobordisms as the homotopy fiber of the forgetful map from the space of smooth h-cobordisms to the space of topological h-cobordisms. In Subsection~\ref{subsec:char_hcobs} we introduce the smooth structure characteristic, a characteristic of topologically trivial families of smooth h-cobordisms defined as a lift of the Becker–Gottlieb transfer.

\subsection{Topologically trivial families of h-cobordisms} 
\label{subsec:top_triv_families}

In this subsection we define topologically trivial families of smooth h-cobordisms and provide examples of such objects. 

\begin{defn}
A smooth family of h-cobordisms $p:W\to B$ with boundaries $\partial_0W:= M$ and $\partial_1W:=M'$ given as smooth manifold bundles $p_0:M\to B$ and $p_1:M'\to B$ is topologically trivial if there exists a fiberwise homeomorphism $h:W\to M\times I$ over $B$. 
\end{defn}

\begin{rmk}
When the base is a point, the Whitehead torsion of a topologically trivial h-cobordism must be trivial, and thus by the s-cobordism theorem $W$ must be a cylinder. This is not necessarily the case when the base is not contractible. In particular, there may be interesting maps from the base into the basepoint component of the loopspace of the smooth Whitehead space. 
\end{rmk}

\begin{example}
Hatcher's construction takes as input a vector bundle classified by $G/O$ and produces disk bundles that are fiberwise homeomorphic to a trivial disk bundle, but not fiberwise diffeomorphic. The immersed Hatcher construction~\cite{GI14} utilizes Hatcher's disk bundles to produce topologically trivial h-cobordisms. Briefly, given a smooth manifold bundle $p_0:M\to B$, they consider the bundle $p_0\times I: M\times I\to B$ and glue a family of cancelling handles parametrized by $B$ on the outgoing boundary $M\times 1\to B$. This family of handles is constructed to be one of Hatcher's disk bundles. The result of this construction is a topologically trivial family of smooth h-cobordisms, as the bundle remains fiberwise homeomorphic to $M\times I\to B$. This family of smooth h-cobordisms is not necessarily fiberwise diffeomorphic to $M\times I\to B$, as the smooth structure on the outgoing boundary has changed according to the smooth structure on Hatcher's disk bundle. See~\cite{GI14} for details of the construction. 
\end{example}

\subsection{Moduli spaces of h-cobordisms}
\label{subsec:moduli_hcobs}

Given a smooth manifold $F$, we now define the space of smooth h-cobordisms on $F$, the space of topological h-cobordisms on $F$, and the space of topologically trivial smooth h-cobordisms on $F$. We also introduce notation for the stable versions of these spaces. 

\begin{defn}
Let $H^{t}(F)$ denote the space of topological h-cobordisms on $F$. 
This space is defined to be the geometric realization of a simplicial set $H_\bullet^{t}(F)$. 
A $k$-simplex in $H_\bullet^{t}(F)$ is a  topological manifold bundle $\pi:E\to\Delta^q$ for which each fiber $W_p = \pi^{-1}(p)$ is a topological h-cobordism on $F$. 
We denote by $H^{t}_B(F)$ the mapping space $|H_\bullet^{t}(F)|^B$.
\end{defn}

\begin{defn}
Let $H^{d}(F)$ denote the space of smooth h-cobordisms on $F$. 
This space is defined to be the geometric realization of a simplicial set $H_\bullet^{d}(F)$. 
A $k$-simplex in $H_\bullet^{d}(F)$ is a  smooth bundle $\pi:E\to\Delta^q$ for whcih each fiber $W_p = \pi^{-1}(p)$ is a smooth h-cobordism on $F$. 
We denote by $H^{d}_B(F)$ the mapping space $|H_\bullet^{d}(F)|^B$.
\end{defn}

\begin{defn}
Let $H^{t/d}(F)$ denote the space of topologically trivial h-cobordisms on $F$. 
This space is defined to be the geometric realization of a simplicial set $H_\bullet^{t/d}(F)$. 
A $k$-simplex in $H_\bullet^{t/d}(F)$ is a pair $(\pi,h)$ for which the map $\pi:E\to\Delta^q$ is a smooth bundle such that each fiber $W_p = \pi^{-1}(p)$ is a smooth h-cobordism on $F$. 
The map $h$ is a homeomorphism from $E$ to $F\times I\times \Delta^k$ over $\Delta^k$. 
We denote by $H^{t/d}_B(F)$ the mapping space $|H_\bullet^{t/d}(F)|^B$.
\end{defn}

Let $\h^X_B(F)$ denote the stabilizations of the spaces $H^X_B(F)$ with respect to stabilization maps $H^X_B(F)\to H^X_B(F\times I)$ for $X$ being any of $t,d,\text{ or }t/d$. 

\begin{prop}
The space $\h^{t/d}(F)$ is the homotopy fiber of the forgetful map $\h^d(F)\to \h^t(F)$ over $F\times I$. 
\end{prop}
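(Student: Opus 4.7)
The plan is to construct a natural comparison map $\phi : \h^{t/d}(F) \to \hofiber(\h^d(F) \to \h^t(F); F\times I)$ and verify it is a weak equivalence by a fiberwise comparison over $\h^d(F)$. For the construction of $\phi$, I would send a $k$-simplex $(\pi : E \to \Delta^k, h)$ of $H^{t/d}_\bullet(F)$ to the pair $(\pi, \gamma_h)$, where $\pi$ contributes the smooth $k$-simplex of $H^d_\bullet(F)$, and $\gamma_h$ is the canonical 1-parameter family of topological h-cobordism bundles interpolating between the underlying topological bundle of $\pi$ and the trivial bundle $F \times I \times \Delta^k$. Explicitly, $\gamma_h$ arises as the mapping cylinder of $h$: one takes the topological h-cobordism bundle $E \times \Delta^1 \to \Delta^k \times \Delta^1$ and uses $h$ to relabel the fiber over $\Delta^k \times \{1\}$ as the trivial bundle. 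This assignment is compatible with the stabilization maps, producing a well-defined stable comparison map $\phi$.

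To prove that $\phi$ is a weak equivalence, I would analyze the induced maps on fibers over $\h^d(F)$. For a point $W \in \h^d(F)$, the fiber of $\h^{t/d}(F) \to \h^d(F)$ is the torsor $\mathrm{Iso}^{t}(W, F\times I)$ of topological h-cobordism trivializations of $W$ (empty unless $W$ is topologically trivial), while the fiber of $\hofiber \to \h^d(F)$ is the space of paths in $\h^t(F)$ from the underlying topological bundle of $W$ to $F\times I$. Since $\h^t_\bullet(F)$ is a classifying space for stable topological h-cobordism bundles, the component of the basepoint is equivalent to $B\mathrm{Top}^{h\text{-cob}}(F\times I)^s$, and the path-loop fibration identifies this path space (when nonempty) with the torsor $\mathrm{Iso}^{t}(W, F\times I)$. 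The construction of $\gamma_h$ realizes this identification on fibers, so $\phi$ is a weak equivalence fiberwise and hence globally via the long exact sequences of homotopy groups.

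The main technical obstacle is rigorously establishing the classifying-space identification, equivalently the loop-space formula $\Omega_{F\times I}\, \h^t(F) \simeq \mathrm{Top}^{h\text{-cob}}(F\times I)^s$. This is a standard but delicate assertion: it relies on topological isotopy extension, ensuring that a topological isomorphism of bundles can always be realized by a genuine path of bundles in $\h^t_\bullet(F)$, together with a bar-construction model of $\h^t_\bullet(F)$ identifying it with the expected classifying space. Once this identification is in hand, the weak equivalence of fibers of $\phi$ is immediate and the proposition follows formally.
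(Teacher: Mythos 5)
Your proposal follows the same basic strategy as the paper: construct a map $\h^{t/d}(F)\to\hofiber\bigl(\h^d(F)\to\h^t(F)\bigr)$ by converting the fiberwise homeomorphism $h$ into a one-parameter family of topological h-cobordisms, i.e.\ a path in $\h^t(F)$. The paper realizes this path by gluing $F\times I$ onto $E$ along $\partial_0$ and restricting to the subcobordisms $\pi'^{-1}[-t,1]$ for $t\in[0,1]$, whereas you use a mapping-cylinder / fiber-relabeling construction; these yield homotopic families, so the comparison map is the same up to homotopy. Where you go further is in the verification: the paper's argument stops at defining the map on $0$-simplices and does not explicitly establish that it is a weak equivalence, while you supply a fiberwise torsor comparison over $\h^d(F)$ and correctly flag the loop-space identification (that $\Omega_{F\times I}\h^t(F)$ is the stable group of topological self-homeomorphisms of $F\times I$, resting on topological isotopy extension and a bar-construction model of $H^t_\bullet(F)$) as the substantive technical input that both arguments implicitly invoke. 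Your outline is in this sense more complete than the paper's own. Two refinements you should make: work with homotopy fibers throughout rather than strict fibers, or else verify that the forgetful maps of simplicial sets are Kan fibrations so the distinction is immaterial; and the equivariance of $h\mapsto\gamma_h$ with respect to the two torsor actions should be checked explicitly rather than asserted, since this is what guarantees the fiberwise map is an equivalence and not merely a map between weakly equivalent spaces.
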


\begin{proof}
It suffices to see that the unstable space $H^{t/d}(F)$ is the homotopy fiber of the forgetful map $H^d(F)\to H^t(F)$. Let $(\pi,h)$ be a zero simplex in $H_\bullet^{t/d}(F)$. That is, $\pi:E\to *$ is a smooth manifold bundle with $E$ is a smooth h-cobordism on $F$, and $h$ is a homeomorphism from $F\times I$ to $E$. Then $\pi$ is clearly a point in $H^d(F)$. It remains to show that $h$ is equivalent to a 1-simplex in $H_\bullet^t(F)$. Thus from $h$ we must obtain a one parameter family of topological h-cobordisms on $F$ that starts at $F\times I$ and ends at $E$. Consider the h-cobordism $F\times I\cup_{\partial_0} E$ in $H^d(F)$, which is diffeomorphic to $E$, and homeomorphic to $F\times [-1,1]$. Let $\pi': E\cup F\times I\to [-1,1]$ be the new projection map for this  family of topological h-cobordisms, and consider the h-cobordisms given by $\pi'^{-1}[-t,1]$ for $t\in[0,1]$. This is the desired one parameter family of topological h-cobordisms.  
\end{proof}

\subsection{Characteristics of h-cobordisms and the Dwyer–Weiss–Williams pullback square}
\label{subsec:char_hcobs}

In this section we introduce the relative Becker–Gottlieb transfer and relative excisive A-theory Euler characteristic on families of smooth and topological h-cobordisms, respectively. 
We then recall a result from~\cite{DWW03} which situates these characteristics in a homotopy pullback square. 

Associated to a smooth manifold bundle $p:M\to B$ with compact fibers, our characteristics will be points in the section spaces $\Gamma_BQ_B(M_+)$ and $\Gamma_BA_B^\%(M)$. 
Roughly speaking, these spaces are sections of the fiberwise homology bundles obtained by taking a fiberwise smash product with the sphere spectrum $\S$ and the algebraic K-theory of spaces functor evaluated at a point, $A(*)$. Moreover, these section spaces are related by a map $\eta:\Gamma_BQ_B(M_+)\to\Gamma_BA_B^\%(M)$ induced by the unit map $\S\to A(*)$.

The Becker–Gottlieb transfer is a section $\tr(p)\in\Gamma_BQ_B(M_+)$, for which the composition of $\tr(p)$ and the inclusion map $Q_B(M_+)\hookrightarrow Q(M_+)$ is the usual transfer $B\to Q(M_+)$. The excisive A-theory Euler characteristic is a section $\chi^\%(p)\in\Gamma_BA_B^\%(M)$. More precise homotopical formulations of these spaces and exact definitions of these characteristics are given in Section~\ref{sec:PHthms}.

Given that our purpose is to study h-cobordisms, we also require \textit{relative} versions of these characteristics. When considering a smooth h-cobordism bundle $p:W\to B$ with boundaries $\partial_0W:= M$ and $\partial_1W:=M'$ given as smooth manifold bundles $p_0:M\to B$ and $p_1:M'\to B$, we require versions of the Becker–Gottlieb transfer and excisive A-theory Euler characteristics that are relative to $\partial_0W$.

We denote by $\tr_\partial(p)$ the section $\tr(p)-r^*\tr(p_0)$ in $\Gamma_BQ_B(W)$ where $r$ is the retraction of $W$ onto $M$. We denote by $\chi^\%_\partial(p)$ the section $\chi^\%(p)-r^*\chi^\%(p_0)$ in $\Gamma_BA_B^\%(W)$. 

\medskip

\noindent Dwyer, Weiss, and Williams proved the following theorem relating $\tr(p)$ and $\chi^\%(p)$.
\begin{numberedtheorem}[Index Theorem~\cite{DWW03}]
For $p:M\to B$ a bundle of compact smooth manifolds, $\chi^\%(p)\in\Gamma_BA_B^\%(M)$ is fiberwise homotopic to $\eta\circ\tr(p)$.
\end{numberedtheorem}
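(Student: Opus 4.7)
The plan is to interpret both sides of the claimed identification as instances of a single categorical trace construction in parametrized stable homotopy, so that the statement becomes naturality of traces under the symmetric monoidal functor induced by $\eta$. First I would set up the framework: the symmetric monoidal $\infty$-category of parametrized spectra over $B$, together with base change along $\eta:\S\to A(*)$ given by the fiberwise smash product with $A(*)$. Then I would establish fiberwise Atiyah duality for the smooth compact manifold bundle $p:M\to B$, showing that $\Sigma^\infty_B M_+$ is dualizable in parametrized spectra, with dual the Thom spectrum of the fiberwise stable normal bundle.

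Using this dualizability, I would identify the Becker--Gottlieb transfer $\tr(p)\in\Gamma_B Q_B(M_+)$ with the categorical trace of the identity endomorphism of $\Sigma^\infty_B M_+$; this is essentially the classical description of the transfer as a Pontryagin--Thom collapse composed with the diagonal. In parallel, using $A^\%_B(M)\simeq \Sigma^\infty_B M_+\wedge A(*)$, I would express $\chi^\%(p)\in\Gamma_B A^\%_B(M)$ as the categorical trace of the identity on the base change of $\Sigma^\infty_B M_+$ along $\eta$. This parallel identification is the geometric content of the theorem and requires unpacking the definition of the excisive A-theory Euler characteristic as the image of the identity class under the parametrized A-theoretic trace for compact fibers.

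Finally, I would invoke the naturality of categorical traces under symmetric monoidal functors. Base change along $\eta$ is symmetric monoidal, and any such functor carries the trace of the identity on a dualizable object to the trace of the identity on its image. Applied to $\Sigma^\infty_B M_+$, this immediately yields the desired fiberwise homotopy $\chi^\%(p)\simeq \eta\circ\tr(p)$.

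The main obstacle I anticipate is foundational rather than conceptual: one must construct the symmetric monoidal $\infty$-category of parametrized spectra carefully enough that fiberwise Atiyah duality lifts to a natural duality in families, and one must verify that the definition of the excisive A-theory Euler characteristic genuinely agrees with the abstract categorical trace after base change. Dwyer, Weiss, and Williams circumvent some of these technicalities with explicit parametrized constructions, but once the foundations are in place the theorem becomes a formal consequence of naturality of traces under a symmetric monoidal functor.
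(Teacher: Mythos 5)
The paper does not supply a proof of this statement; it cites it directly as the Index Theorem of Dwyer--Weiss--Williams~\cite{DWW03}, so there is no in-paper argument to compare yours against. Your sketch describes the modern ``categorical trace'' perspective, which is genuinely different in style from the argument in~\cite{DWW03}: Dwyer, Weiss, and Williams work with explicit point-set and Waldhausen-categorical constructions for both characteristics and build a concrete homotopy between them, whereas you propose to deduce the identity formally from naturality of traces under a symmetric monoidal base-change functor. That abstract route has since been developed (in work on traces in parametrized stable homotopy theory), and it is a legitimate alternative, but it is not the route taken in the cited source.

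Two points deserve sharpening. First, the Becker--Gottlieb transfer $\tr(p)\in\Gamma_BQ_B(M_+)$ is not the categorical trace of $\id_{\Sigma^\infty_B M_+}$ --- that trace is an endomorphism of the unit $\S_B$ and recovers the fiberwise Euler characteristic in $\pi_0$, not a section valued in $Q_B(M_+)$. The transfer is the \emph{refined} trace that uses, in addition to dualizability, the coalgebra structure on $\Sigma^\infty_B M_+$ supplied by the fiberwise diagonal. Naturality of this refined trace under a symmetric monoidal functor is still a theorem, but your statement should be phrased at that level of structure. Second, and more substantively, you identify $\chi^\%(p)$ with the refined trace of the identity after base change along $\eta$ as though this were definitional. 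It is not: $\chi^\%(p)$ is defined as the excisive approximation to the A-theory Euler characteristic, which is produced from the Waldhausen category of retractive spaces over $M$ with finiteness conditions --- a construction valid for topological manifold bundles that have no reason to be fiberwise dualizable in the smooth sense. Establishing that this Waldhausen-style characteristic agrees, for a smooth bundle, with the categorical trace after base change is precisely the nontrivial comparison that the index theorem encodes. You flag this under ``foundational obstacles,'' but it should be recognized as the mathematical heart of the theorem, not a technical preliminary; once that agreement is established, the naturality step you invoke is indeed formal.

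Your proposal is a reasonable blueprint for a modern proof, provided you fill in (a) the correct coalgebraic formulation of the transfer as a refined trace, and (b) the comparison of the Waldhausen-defined excisive characteristic with the refined trace in the $A(*)$-linear setting. As written, item (b) is asserted rather than argued, and it is where all the work lies.
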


\noindent The theorem above implies the commutativity of the diagram in the following stronger theorem also proven by Dwyer, Weiss, and Williams.

\begin{numberedtheorem}[Corollary 12.3 in~\cite{DWW03}]\label{thm:DWWpullback}
The following diagram is a homotopy pullback square: 
\begin{equation}\label{diag:DWWpullback}
\xymatrix{
\h^d_B(F)\ar[r]^-{\tr_\partial(p)}\ar[d]_{\text{forget}}& \Gamma_BQ_B(M_+)\ar[d]^\eta\\
\h^t_B(F)\ar[r]^-{\chi^\%_\partial(p)}& \Gamma_B A^\%_B(M)
}
\end{equation}
\end{numberedtheorem}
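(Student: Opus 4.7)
The plan is to deduce this from the main smoothing theorem of Dwyer--Weiss--Williams~\cite{DWW03} after arranging the bookkeeping of relative characteristics. The square commutes up to a preferred homotopy by the Index Theorem just stated: given a smooth h-cobordism bundle $p:W\to B$ with incoming boundary $p_0:M\to B$, apply the Index Theorem once to $p$ and once to $p_0$, pull the latter homotopy back along the deformation retraction $r:W\to M$, and subtract to obtain a canonical homotopy $\eta\circ\tr_\partial(p)\simeq\chi^\%_\partial(p)$. So the content of the theorem is that the induced map from $\h^d_B(F)$ to the homotopy pullback is a weak equivalence.

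I would verify this fiberwise over $\h^t_B(F)$. Fix a topological h-cobordism bundle $\pi_t$; by naturality it suffices to consider the basepoint where $\pi_t$ is the trivial bundle $M\times I\to B$. By the proposition in Subsection~\ref{subsec:moduli_hcobs}, the homotopy fiber of the forgetful map $\h^d_B(F)\to \h^t_B(F)$ over this point is, by definition, the space $\h^{t/d}_B(F)$ of topologically trivial smooth h-cobordisms on $M$. On the right, since $\chi^\%_\partial(M\times I)$ is canonically nullhomotopic, the homotopy fiber of $\eta$ over this basepoint is the space of sections $\Gamma_B\h^\%_B(M)$, via the fiber sequence $\h^\%_B(M)\to Q_B(M_+)\to A^\%_B(M)$ that presents the smooth Whitehead spectrum as the fiber of the parameterized unit $\S\to A(*)$.

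The identification of these two homotopy fibers is exactly the parameterized smoothing theorem of~\cite{DWW03}: a topologically trivial smooth h-cobordism $W$ is sent to the lift of $\tr_\partial(W)$ through $\eta$ determined by the preferred nullhomotopy of $\chi^\%_\partial(W)$ provided by the fiberwise homeomorphism $W\cong M\times I$. Naturality in $B$ and in the choice of basepoint $\pi_t$ then promotes this fiberwise equivalence to the homotopy pullback statement.

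The main obstacle will be the relative/absolute translation: checking that the relative Becker--Gottlieb transfer $\tr_\partial$ and the relative excisive characteristic $\chi^\%_\partial$, defined by subtracting the pullback along $r:W\to M$ of the corresponding invariants of $p_0$, fit compatibly into the absolute pullback square of DWW. This is a diagram chase using the Index Theorem twice (once for $W$ and once for $M$) and the fact that $r$ is a fiberwise homotopy equivalence; once this is set up, the pullback assertion is a direct consequence of the DWW smoothing theorem specialized to h-cobordism bundles.
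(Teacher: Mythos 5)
The paper does not prove this statement; it is cited verbatim as Corollary~12.3 of Dwyer--Weiss--Williams~\cite{DWW03}, so there is no internal argument to compare against.

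Your outline captures the correct formal structure: commutativity up to a preferred homotopy from two applications of the Index Theorem (to $p$ and to $p_0$, composed with the retraction $r$), reduction of the pullback assertion to an equivalence of vertical homotopy fibers, identification of the left fiber with $\h^{t/d}_B(F)$ via the Proposition in Subsection~\ref{subsec:moduli_hcobs}, and identification of the right fiber with $\Gamma_B\h^\%_B(M)$ via the fiber sequence $\h(*)\to\S\to A(*)$. The gap is in the final step. You invoke ``the parameterized smoothing theorem of~\cite{DWW03}'' to assert that the induced map between these fibers is a weak equivalence, and you describe that map as sending a topologically trivial h-cobordism $W$ to the lift of $\tr_\partial(W)$ determined by the preferred nullhomotopy of $\chi^\%_\partial(W)$. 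But this map is precisely $\theta$ of Definition~\ref{defn:theta}, and the assertion that $\theta$ is an equivalence is Corollary~\ref{cor:DWWpullback}, which the paper \emph{deduces from} the homotopy pullback statement, not conversely. Given commutativity, the two assertions are interchangeable, so your argument is circular as written: you are citing, under a different name, the very fact you set out to prove. Breaking the circle requires an independent proof that $\theta$ is an equivalence; in~\cite{DWW03} this rests on the converse Riemann--Roch theorem and the stable parameterized h-cobordism machinery, none of which is supplied by the relative/absolute bookkeeping in your sketch. You have correctly located where the mathematical content lives, but you have not discharged it.
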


\begin{rmk}
This theorem is an essential ingredient in the proof of Dwyer, Weiss, and William's converse Riemann–Roch theorem.
\end{rmk}

\begin{corollary}\label{cor:DWWpullback}
The homotopy fiber of the left vertical arrow in diagram~(\ref{diag:DWWpullback}) is the space $\h^{t/d}_B(F)$, and the homotopy fiber of the right vertical arrow is the space $\Gamma_B\h^\%_B(M)$, the space of sections of the space obtained by taking a fiberwise smash product with $\h(*)$. 
The induced map on these homotopy fibers,
$\theta:\h_B^{t/d}(F)\to \Gamma_B\h_B^\%(M)$ 
is a homotopy equivalence. 
\end{corollary}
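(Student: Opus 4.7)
The plan is to show that the corollary is essentially a formal consequence of Theorem~\ref{thm:DWWpullback} combined with two identifications of homotopy fibers, one of which has already been established in the paper and the other of which follows from the stable parametrized h-cobordism theorem. Given a homotopy pullback square, the map induced on horizontal (or vertical) homotopy fibers is automatically a weak equivalence, so the bulk of the work is in recognizing the two homotopy fibers correctly.

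For the left vertical arrow, the preceding proposition identifies $\h^{t/d}(F)$ with the homotopy fiber of the unstable forgetful map $\h^d(F)\to \h^t(F)$. To obtain the corresponding statement over $B$, I would note that the stabilization construction and the mapping space functor $(-)^B$ both preserve homotopy fibers (the latter because $B$ is a cofibrant test object and exponentiation sends fibrations of simplicial sets to fibrations). Thus the homotopy fiber of $\h^d_B(F)\to \h^t_B(F)$ over the basepoint corresponding to $F\times I\to B$ is exactly $\h^{t/d}_B(F)$.

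For the right vertical arrow, the plan is to invoke the stable parametrized h-cobordism theorem in its fiberwise form: the homotopy fiber of the unit map $\S\wedge M_+ \to A(*)\wedge M_+$, assembled fiberwise over $B$, computes the fiberwise smash product of the stable h-cobordism spectrum $\h(*)$ with $M$. Concretely, the unit $\S\to A(*)$ produces a fibration of parametrized spectra $\h^\%_B(M)\to Q_B(M_+)\to A^\%_B(M)$, and applying $\Gamma_B$ preserves this fiber sequence (section spaces commute with homotopy limits). This identifies the homotopy fiber of $\eta$ with $\Gamma_B\h^\%_B(M)$.

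With both homotopy fibers identified, the standard fact that in a homotopy pullback square the induced map between the horizontal fibers of the two columns is a weak equivalence produces the map $\theta:\h_B^{t/d}(F)\to \Gamma_B\h_B^\%(M)$ and shows it is a homotopy equivalence. The main obstacle I anticipate is the bookkeeping around the \emph{relative} versions of the Becker--Gottlieb transfer and the excisive A-theory Euler characteristic: one must check that after passing to the relative sections $\tr_\partial$ and $\chi^\%_\partial$ the basepoints line up correctly so that the homotopy fibers in question are pointed at the trivial h-cobordism $F\times I$ and at the constant (zero) section, respectively. Once this identification of basepoints is pinned down, the remainder of the argument is formal.
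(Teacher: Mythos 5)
Your argument is correct and supplies precisely the formal steps the paper leaves implicit: the corollary is stated without proof, and identifying the two vertical homotopy fibers (the left via the preceding proposition passed through stabilization and $(-)^B$, the right via the fiberwise stable parametrized h-cobordism theorem applied under $\Gamma_B$) and then invoking the standard fact that a homotopy pullback square induces an equivalence on vertical fibers is exactly what is intended. Your flagging of the basepoint/relative-characteristic bookkeeping is also the right thing to worry about; once one notes that $\tr_\partial$ and $\chi^\%_\partial$ both vanish on the product h-cobordism $F\times I$, the basepoints match and the argument closes.
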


\subsection{The smooth structure characteristic}
\label{subsec:theta}

In light of Corollary~\ref{cor:DWWpullback}, we will now define the smooth structure characteristic for a single family of topologically trivial h-cobordisms. Associated to a topologically trivial family of h-cobordisms is a canonical nullhomotopy of the excisive A-theory Euler characteristic. This nullhomotopy is used to define the smooth structure characteristic, as in the following definition.

\begin{defn}\label{defn:theta}
The smooth structure characteristic of a topologically trivial family of h-cobordisms $p:W\to B$, denoted $\theta(W,\partial_0W)$, is a section in $\Gamma_B\h_B^\%(W)$ canonically determined by the point $\tr_\partial(p)\in \Gamma_B Q_B(W)$ over $\chi^\%_\partial(p)\in\Gamma_B A_B^\%(W)$, and the path from $\chi^\%_\partial(p)$ to $\chi^\%(M)$ determined by the fiberwise homeomorphism $h:W\to M\times I$. 
\end{defn}

\begin{rmk}
In keeping with the definition of higher smooth torsion due to~\cite{DWW03} as a nullhomotopy of the composition of the Becker–Gottlieb transfer with the map $Q(M_+)\to K(\Z)$, Definition~\ref{defn:theta} presents the smooth structure characteristic as a nullhomotopy of the excisive A-theory Euler characteristic. Thus, we may think of the smooth structure characteristic as a refinement of the higher smooth torsion.
\end{rmk}

\section{Review of generalized Morse functions}
\label{sec:GMFs}

In this section we introduce the theory of generalized Morse functions used in the rest of this paper. In Subsection~\ref{subsec:gmf_trans} we define generalized Morse functions, give the local properties of these functions as they vary in families, and prove the transversality result that implies that the critical locus of a fiberwise generalized Morse function is a submanifold of the total space. In Subsection~\ref{subsec:ghosts} we introduce \textit{ghost sets}, a perturbation of the critical locus of a fiberwise generalized Morse function in the neighborhood of a birth-death singularity which will be used in subsequent sections to compute characteristics associated to the critical locus. In Subsection~\ref{subsec:strat_def} we introduce stratified subsets as a generalization of the critical locus of a fiberwise generalized Morse function. We define stratified deformations of stratified subsets, and give a general purpose construction that deforms the critical locus into a stratified subset that lies in two consecutive degrees. This property is used in Subsection~\ref{subsec:PHTheta} to prove a fiberwise Poincar\'e–Hopf theorem for the smooth structure characteristic, and in Subsection~\ref{subsec:duality_thm} to prove a duality theorem for the smooth structure characteristic.

\subsection{Definitions and transversality properties of generalized Morse functions}
\label{subsec:gmf_trans} 

We begin with the definition of a \textit{generalized Morse function}.

\begin{defn}
A generalized Morse function on a single manifold $M$ is a function $f:(M,\partial_0M)\to (I,0)$ that admits only Morse and birth-death critical points. 
\end{defn}

\noindent In local coordinates, a Morse critical point of the function $f$ can be written in the form
$$
f(x)=-x_1^2-\cdots-x_i^2+x_{i+1}^2+\cdots+x_n^2
$$
with respect to coordinates $(x_1,\cdots,x_n)\in\R^i\times\R^{n-i}$. At a birth-death critical point, the normal form is as follows: 
$$
f(x)=-x_1^2-\cdots-x_{i-1}^2+x_i^3+x_{i+1}^2+\cdots+x_n^2
$$

\medskip

In this paper, we are interested in families of generalized Morse functons. Let $f:(W,\partial_0W)\to (I,0)$ be a fiberwise generalized Morse function, meaning that its restriction to each fiber is a generalized Morse function. Igusa proved that such functions always exist on smooth fiber bundles when the dimension of the fiber is at least the dimension of the base~\cite{Igu90}, and this dimensionality condition was later relaxed by independent work of Lurie~\cite{Lur09} and Eliashberg–Misachev~\cite{EM12}. Our goal is to recall the local behavior of such functions, and to illustrate the key transversality property enjoyed by their critical loci.

\medskip\noindent In the parametrized setting, we have the following proposition/summary from~\cite{Igu05}:

\begin{prop}\label{prop:gmf_normal_form}
In a generic $p$-parameter family of generalized Morse functions, birth-death points occur on a codimension one subspace of the parameter space. 
The family of functions $f_t$ has the form
$$
f_t(x)=-x_1^2-\cdots-x_{i-1}^2+x_i^3+t_0x_i+x_{i+1}^2+\cdots+x_n^2
$$
with respect to parameter coordinates $t_0,\cdots t_{p-1}$ and $t$-dependent local coordinates $(x_1,\cdots,x_n)$ for $M$.
\end{prop}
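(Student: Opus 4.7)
The plan is to combine two classical ingredients: a jet-transversality argument to locate the birth-death locus in the parameter space, and a parametrized splitting lemma together with the universal unfolding of an $A_2$-singularity to produce the stated normal form. Since this proposition is essentially a summary of standard singularity theory (and is attributed to Igusa's paper), the work is mostly in assembling existing machinery with care for the parametrized structure.

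First I would stratify the $3$-jet bundle $J^3(W/B,\R)$ by singularity type of the germ along a fiber: the Morse ($A_1$) stratum has fiberwise codimension $n$, the birth-death ($A_2$) stratum — characterized by a one-dimensional Hessian kernel together with a nonzero cubic term along the kernel direction — has fiberwise codimension $n+1$, and all deeper strata ($A_k$ for $k\geq 3$, and the $D$-/$E$-type germs) have codimension at least $n+2$. A fiberwise generalized Morse function $f:(W,\partial_0 W)\to (I,0)$ is exactly the data of a section of the fiberwise critical locus meeting only the $A_1$ and $A_2$ strata. The multi-jet transversality theorem, applied to a generic $p$-parameter deformation, shows that this section is transverse to every stratum; by counting codimensions, the $A_2$ locus meets each fiber in isolated points and, when varied in $B$, traces out a codimension-one subspace of the parameter space. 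This gives the first assertion.

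For the normal form, I would apply the parametrized splitting lemma near a birth-death point $(x_\ast,t_\ast)$: since the Hessian of $f_t$ at $x_\ast$ has one-dimensional kernel, the implicit function theorem applied to the partial gradient in the $n-1$ nondegenerate directions yields smooth families of coordinates $(x_1,\ldots,x_n)$ on $W$ and a splitting
$$
f_t(x) \;=\; -x_1^2-\cdots-x_{i-1}^2+g(x_i,t)+x_{i+1}^2+\cdots+x_n^2,
$$
where $i-1$ is the index of the Morse part and $g$ is a smooth function depending on all parameters. The residual one-variable family $g(\cdot,t)$ has an $A_2$-germ at $(x_\ast,t_\ast)$, and by the transversality established above the family hits the $A_2$-stratum transversely in one direction. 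The Mather/Thom unfolding theorem for $A_2$ identifies $x^3+tx$ as its universal unfolding, so there is a smooth change of spatial coordinate together with a smooth change of the parameter picking out the transverse direction — call the new transverse coordinate $t_0$ — that conjugates $g$ to $x_i^3+t_0 x_i$. Substituting this into the splitting yields the claimed normal form.

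The main obstacle is the parametrized bookkeeping: the splitting lemma must be applied with smooth dependence on all $p$ parameters, and the reduction to $x^3+tx$ must be carried out so that the remaining $p-1$ parameters drop out (i.e., act trivially on the germ) while $t_0$ captures the transverse direction to the $A_2$ stratum. Transversality of the $p$-parameter family to the stratum is precisely what guarantees this, and it is this compatibility between the two halves of the argument that deserves the most care.
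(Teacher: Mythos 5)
Your argument via jet transversality (stratifying $J^3$ by singularity type so that the $A_2$ locus has fiberwise codimension $n+1$, hence sweeps out a codimension-one bifurcation set in the $p$-dimensional parameter space, while all deeper germs have codimension $\geq n+2$ and are avoided generically), followed by the parametrized splitting lemma and the identification of $x^3+tx$ as the universal unfolding of the $A_2$ germ, is the standard singularity-theoretic proof and is correct. Note that the paper itself offers no proof of this proposition --- it is imported verbatim as a ``proposition/summary'' from Igusa's book \cite{Igu05}, where exactly this parametrized normal form is developed as part of the Cerf-theoretic foundations for generalized Morse functions --- so your outline supplies an argument the paper deliberately treats as background.
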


The coordinate $t_0$ in the proposition above is often referred to as the `unfolding directon' associated to the birth-death critical point. This suggests the following useful depiction of a birth-death critical point, which might be interpreted as a `cancellation' of Morse critical points, or their associated handles. 

\begin{center}
\begin{figure}[h]
\begin{overpic}[abs,width=8cm]{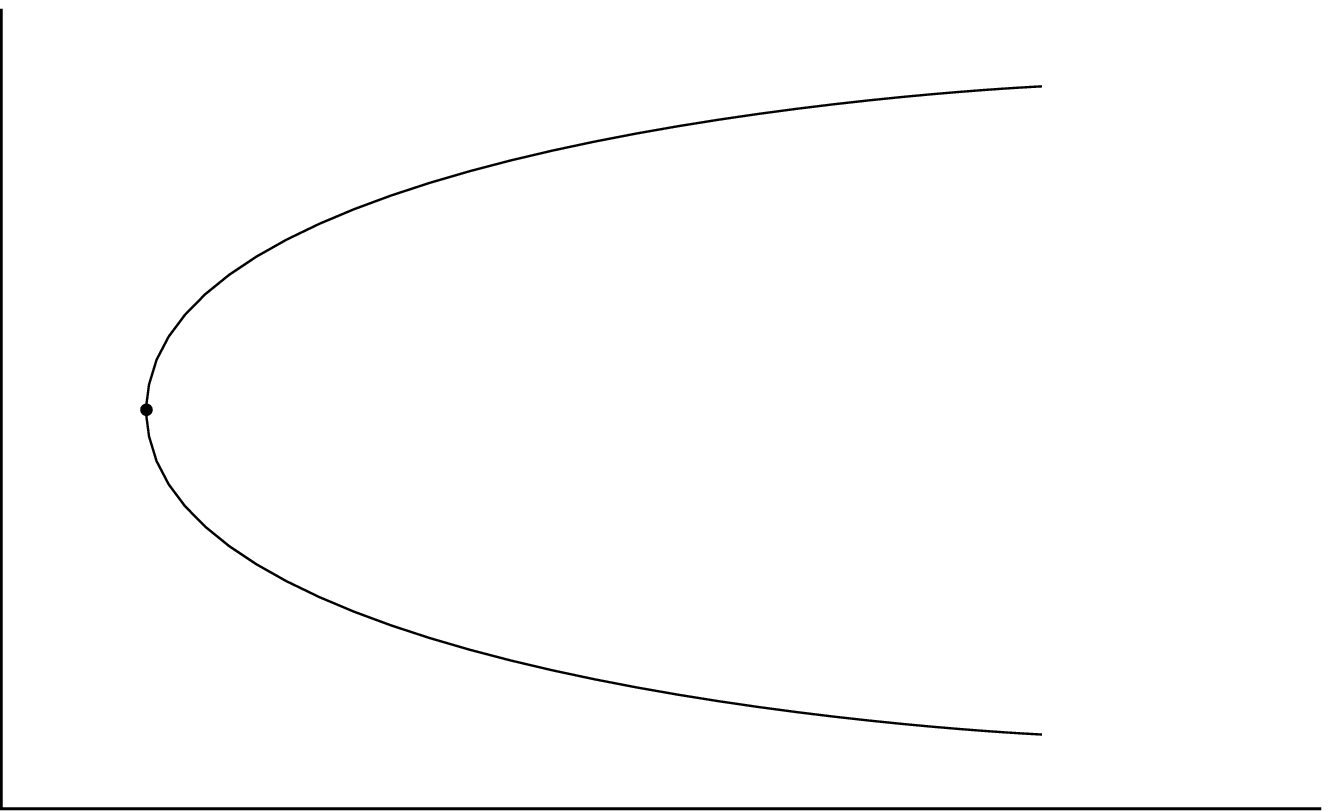}
\put(-10,70){$x$}
\put(110,-10){$t_0$}
\end{overpic}
\caption{A depiction of the critical locus of a fiberwise generalized Morse function in the local neighborhood of a birth-death critical point.}
\end{figure}
\end{center}

To obtain our desired transversality result, we proceed to compute the Hessian of $f$ at a birth-death critical point.

\medskip

Let $p:W^m\to B^k$ be a smooth fiber bundle with compact base and fiber $F^n$. 
Let $f:(W,\partial_0W)\to (I,0)$ be a fiberwise generalized Morse function as before. 
Then by the proposition above, in local coordinates at a birth-death singularity we have
$$
f_t(x) = -x_1^2-\cdots-x_i^2+x_{i+1}^3+t_0x_{i+1}+x_{i+2}^2+\cdots+x_n^2.
$$
where $t_0$ is the unfolding direction. 
The gradient of this function is a section $W\to TW$, and if we take the gradient with respect to fiber coordinates, we have a section of the vertical tangent bundle, $W\to T^\vee W$.
We can explicitly compute the map $\nabla f:W\to T^\vee W$ as 
$$
(x,t)\mapsto \langle -2x_1,\cdots, -2x_i,3x_{i+1}^2+t_0,2x_{i+2},\cdots,2x_n\rangle
$$ 
The derivative of $\nabla f$ is a map on tangent spaces with the last map in the composition below being the projection off of the nonidentity component. 
$$
T_{(x,t)}W\mapsto T_{\nabla f(x,t)}(T^\vee W)\cong T^\vee W\oplus T^\vee W\rightarrow T^\vee W
$$

This map takes the form of a rectangular matrix of size $(2n+k)\times (n+k)$, which is written below. 
The tangent space in the domain is labeled using coordinates $t_0,\cdots, t_{k-1}$ in the base, and $x_1,\cdots,x_n$ in the fiber. 
In the target we add labels $\frac{\partial}{\partial x_i}$ for $i,\cdots, n$ for the coordinates in the vertical tangent direction. 
Keep in mind that in the neighborhood of a birth-death singularity, $t_0$ is always identified with the `unfolding` direction.

\medskip

\begin{adjustbox}{width=.6\columnwidth,center}
\vphantom{
    $
    \begin{matrix}
    \overbrace{XYZ}^{\mbox{$R$}}\\ \\ \\ \\ \\ \\
    \underbrace{pqr}_{\mbox{$S$}}
    \end{matrix}
    $
    }%
    $
\begin{matrix}
    \coolleftbrace{x}{e \\ y\\ y\\e \\ y\\ y\\y}\\
    \coolleftbrace{t}{e \\ y\\ y\\e \\ y\\ y\\y}\\
    \coolleftbrace{\frac{\partial}{\partial x}}{e \\ y\\ y\\e \\ y\\ y\\y \\y}
\end{matrix}%
    $
    $
\left(
\begin{array}{ccccccc:ccccccc}
\coolover{x}{\phantom{-}1\phantom{-} & \phantom{\ddots} &  \phantom{0} & \phantom{6x_i} &  \phantom{0} & \phantom{\ddots} & \phantom{0}  &}  \coolover{t}{ \phantom{0} &  \phantom{0} & \phantom{0}  & \phantom{0}  & \phantom{0}  & \phantom{0}  &  \phantom{0}}\\
      & 1 &   &   &   &   &   &   &   &   &   &   &   &  \\
      &   & 1 &   &   &   &   &   &   &   &   &   &   &  \\
      &   &   & 1 &   &   &   &   &   &   &   &   &   &  \\
      &   &   &   & 1 &   &   &   &   &   &   &   &   &  \\
      &   &   &   &   & 1 &   &   &   &   &   &   &   &  \\
      &   &   &   &   &   & 1 &   &   &   &   &   &   &  \\
    \hdashline
      &   &   &   &   &   &   & 1 &   &   &   &   &   &  \\
      &   &   &   &   &   &   &   & 1 &   &   &   &   &  \\
      &   &   &   &   &   &   &   &   & 1 &   &   &   &  \\
      &   &   &   &   &   &   &   &   &   & 1 &   &   &  \\
      &   &   &   &   &   &   &   &   &   &   & 1 &   &  \\
      &   &   &   &   &   &   &   &   &   &   &   & 1 &  \\
      &   &   &   &   &   &   &   &   &   &   &   &   & 1\\
    \hdashline
    -2 &   &   &   &   &   &   &   &   &   &   &   &   &  \\
      & \ddots &   &   &   &   &   &   &   &   &   &   &   &  \\
      &   & -2 &   &   &   &   &   &   &   &   &   &   &  \\
      &   &   & 6x_{i+1} &   &   &   &  \phantom{-}1 &   &   &   &   &   &  \\
      &   &   &   & 2 &   &   &   &   &   &   &   &   &  \\
      &   &   &   &   & \ddots &   &   &   &   &   &   &   &  \\
      &   &   &   &   &   & 2 &   &   &   &   &   &   &  \\
     &   &   &   &   &   &  &\coolunder{t_0}{}  &   &   &   &   &   &  \\
\end{array}
\right)
$
\end{adjustbox}

\vspace{1cm}
We began with a smooth map $\nabla f:W\to T^\vee W$, and now we can check to see whether the image of this map is transverse to the inclusion of the zero section of $T^\vee W$, $i_0:W\to T^\vee W$. 
If $\ell$ is in $\nabla f(W) \cap i_0 W$, then $\nabla f(W)$ is transverse to  $i_0 W$ if, for all $a,b,\ell$ so that $\nabla f(a)=i_0(b)=\ell$,
$$\text{Im}(D(\nabla f)(a))\oplus  \text{Im}(D(i_0)(b)) \twoheadrightarrow T_p(T^\vee W).$$
It is clear that the intersection $\ell\in \nabla f(W) \cap i_0 W$ is the set of critical points of $f$, and as these points are either Morse critical points, or birth-death singularities, we handle each of these cases independently. 
In the event that $\ell$ is a Morse critical point, it is a standard exercise that the map above is surjective. 
The case of a birth-death singularity is identical, except in the row labelled by $\frac{\partial}{\partial x_{i+1}}$. 
At the birth-death singularity, the entry $6x_{i+1}$ vanishes, and if it were not for the 1 in the $t_0$ entry of the row, there would be no image in the 1-dimensional subspace spanned by $\frac{\partial}{\partial x_{i+1}}$ of the matrix above. 
So we do have surjectivity and thus transversality, but only because of the derivative in the unfolding direction $t_0$. 
So transversality is a direct consequence of the unfolding behavior of a parametrized family of generalized Morse functions. 
We summarize this discussion in the lemma and corollary below:

\begin{lemma}
For $f:(W,\partial_0W)\to (I,0)$ a fiberwise generalized Morse function on a  smooth fiber bundle $W\to B$, the section $\nabla f:W\to T^\vee W$ is transverse to the zero section of the vertical tangent bundle of $W$.
\end{lemma}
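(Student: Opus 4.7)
The plan is to verify transversality pointwise on the critical locus, reducing to the two possible singularity types via the normal forms of Proposition~\ref{prop:gmf_normal_form}. Away from the critical locus the intersection $\nabla f(W)\cap i_0(W)$ is empty, so there is nothing to check. At an intersection point $\ell$, I will describe $D(\nabla f)$ and $D(i_0)$ explicitly in coordinates adapted to the family and show that together they surject onto $T_\ell(T^\vee W)$.

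At a Morse critical point the analysis is standard: in the quadratic normal form the Hessian is non-degenerate, so $D(\nabla f)$, projected onto the vertical fiber of $T^\vee W$, already surjects onto the directions $\frac{\partial}{\partial x_1},\ldots,\frac{\partial}{\partial x_n}$, while $D(i_0)$ contributes the remaining horizontal directions coming from both base and fiber. This is the easy case and requires no information beyond nondegeneracy of the Hessian.

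The key case is a birth-death singularity, where the cubic term $x_i^3$ produces a Hessian that degenerates in exactly the $x_i$-direction; taking derivatives only in fiber directions would therefore fail to hit $\frac{\partial}{\partial x_i}$, and transversality would fail if $f$ were regarded as a single function. The saving feature is the parametrized normal form $x_i^3 + t_0 x_i$, whose partial derivative with respect to the base coordinate $t_0$ contributes a nonzero entry in the $\frac{\partial}{\partial x_i}$ slot of the derivative matrix. Combined with the nondegenerate Hessian in the remaining fiber directions and the horizontal image of the zero section, this restores surjectivity onto $T_\ell(T^\vee W)$ and hence transversality.

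The main obstacle is organizational rather than conceptual: one has to assemble $D(\nabla f)\colon T_{(x,t)}W \to T_\ell(T^\vee W)$ as a rectangular matrix, project onto the vertical fiber so that the result can be compared with the image of $D(i_0)$, and verify in the birth-death case that the unfolding coordinate is genuinely a base coordinate rather than a fiber coordinate. The latter is precisely what Proposition~\ref{prop:gmf_normal_form} guarantees in the parametrized setting, so once that is in hand the lemma collapses to a single block-matrix computation carried out uniformly in both cases.
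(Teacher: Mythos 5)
Your proposal matches the paper's proof essentially step for step: verify transversality pointwise on the critical locus via the normal forms of Proposition~\ref{prop:gmf_normal_form}, note that the Morse case is standard, and observe that at a birth-death point the degenerate Hessian entry $6x_{i+1}$ is compensated by the derivative in the unfolding base direction $t_0$, which is exactly the mechanism the paper makes explicit by writing out the rectangular matrix for $D(\nabla f)$.
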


\begin{corollary}
If $\Sigma_f$ denotes the critical locus of a fiberwise generalized Morse function $f$, then the normal bundle $\nu(\Sigma_f)\to\Sigma_f$ to the embedding $\Sigma_f\hookrightarrow W$ is isomorphic to the restriction of the vertical tangent bundle of $W$ to $\Sigma_f$, $T^\vee W|_{\Sigma_f}$.
\end{corollary}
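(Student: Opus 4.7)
The plan is to view this as a direct consequence of the preceding transversality lemma together with the standard functoriality of normal bundles under transverse intersection. Write $s_f := \nabla f \colon W \to T^\vee W$ and let $i_0 \colon W \to T^\vee W$ denote the zero section. By definition $\Sigma_f = s_f^{-1}(i_0(W))$, and the previous lemma asserts that $s_f \pitchfork i_0$.

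First I would invoke the general fact that if a smooth map $g\colon X\to Y$ is transverse to a submanifold $Z\subset Y$, then $g^{-1}(Z)$ is a submanifold of $X$ whose normal bundle in $X$ is canonically isomorphic to the pullback $g^*\bigl(\nu_{Z/Y}\bigr)$ of the normal bundle of $Z$ in $Y$. Applied with $X = W$, $Y = T^\vee W$, $Z = i_0(W)$, and $g = s_f$, this immediately gives
\[
\nu(\Sigma_f) \;\cong\; s_f^*\bigl(\nu_{i_0(W)/T^\vee W}\bigr)\Big|_{\Sigma_f}.
\]

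Next I would identify the normal bundle of the zero section of a vector bundle with the bundle itself: for any vector bundle $E\to W$, the tangent bundle of the total space splits canonically along the zero section as $T E|_{i_0(W)} \cong TW \oplus E$, so $\nu_{i_0(W)/E} \cong E$ (pulled back along $i_0$, which is a diffeomorphism onto its image). Applied to $E = T^\vee W$, this yields $\nu_{i_0(W)/T^\vee W} \cong T^\vee W$ as a bundle over $W$ via $i_0$. Finally, on $\Sigma_f$ the map $s_f$ takes values in the zero section, so $s_f|_{\Sigma_f}$ agrees with $i_0 \circ \iota$ where $\iota\colon \Sigma_f \hookrightarrow W$ is the inclusion. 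Chasing through the identifications, the pullback $s_f^*(T^\vee W)|_{\Sigma_f}$ is exactly $T^\vee W|_{\Sigma_f}$, giving the desired isomorphism.

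No step here is a genuine obstacle; the work has already been done in the explicit Hessian computation leading to the transversality lemma. The only mild subtlety is to keep track of the canonical identification of the normal bundle of the zero section with the vector bundle itself, but this is standard. One could alternatively derive the result coordinate-by-coordinate from the normal forms in Proposition~\ref{prop:gmf_normal_form}: at a Morse critical point of index $i$ the normal directions to $\Sigma_f$ are spanned by $\partial/\partial x_1,\dots,\partial/\partial x_n$, reproducing $T^\vee W$, and at a birth–death point the unfolding coordinate $t_0$ compensates for the vanishing of $6x_{i+1}$ so that the fiber directions again account for the normal bundle. Either presentation gives the corollary in a few lines.
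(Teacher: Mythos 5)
Your proof is correct and is precisely the standard argument the paper intends (and leaves implicit) when stating this as a corollary of the transversality lemma: transversality gives $\nu(\Sigma_f)\cong s_f^*\nu_{i_0(W)/T^\vee W}$, the normal bundle of the zero section is the bundle itself, and restriction to $\Sigma_f$ finishes the identification. Nothing differs in substance from the paper's reasoning.
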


\subsection{Cancelling critical points and ghost sets}
\label{subsec:ghosts}

In this section we will consider the submanifold $\Sigma_f$ of $W$, introduce notation for the submanifolds made up of Morse critical points and birth-death critical points, and discuss how to perturb the critical locus to facilitate the proofs of the fiberwise Poincar\'e–Hopf theorems appearing later in this paper.

Let $Z_i$ denote the submanifold of Morse critical points of degree $i$, where the degree is the number of negative eigenvalues of the Hessian of $f$ at any point in $Z_i$. In general the collection of such critical points may have more than one component, but we will not introduce extra notation for this level of generality. Instead, we assume that $Z_i$ is connected, and note that all proofs in this paper can easily be generalized to accomodate multiple components. The collection of all such $Z_i$ is denoted $\sS(\Sigma_f)$.

The submanifolds $Z_i$ and $Z_{i+1}$ share a common boundary which we denote by $Z^1_i$. The submanifold $Z^1_i$ contains the birth-death critical points of degree $i$. Again, there may be more than one component of birth-death critical points of degree $i$, but we elect not to work at that level of generality.

\medskip

The diagram below depicts how $Z_i$, $Z_{i+1}$, and $Z_i^1$ are arranged in the neighborhood of a birth-death critical point. 

\begin{center}
\begin{figure}[h]
\begin{overpic}[abs,width=6cm]{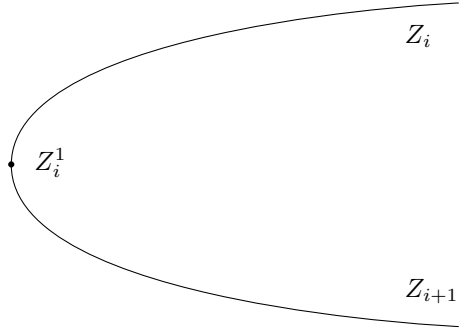}
\put(150,108){$Z_i$}
\put(150,12){$Z_{i+1}$}
\put(10,60){$Z_i^1$}
\end{overpic}
\caption{The critical submanifolds $Z_i$ and $Z_{i+1}$ share a common boundary $Z_i^1$.}
\end{figure}
\end{center}

The image of the birth-death set in $B$, $p(Z_1^i)$, is known as the \textit{bifurcation set}. 
The bifurcation set is a codimension one submanifold of $B$. 
In the neighborhood of a birth-death singularity, the points locally given by $x_i=0$ and $-\epsilon<t_0<0$ are inflection points on which the second derivative of $f$ in the vertical direction vanishes~\cite{Igu05}.
We call these points \textit{ghost points}, and they allow us to define a \textit{ghost set}, $Z_i^g$, which is formally a lift of a one-sided collar neighborhood of the bifurcation set. 
The ghost set is transverse to $Z$, as in the following figure.

\begin{center}
\begin{figure}[H]
\begin{overpic}[abs,width=8cm]{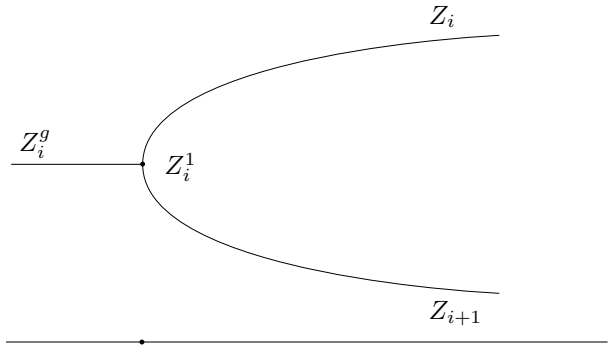}
\put(5,74){$Z_i^g$}
\put(60,65){$Z_i^1$}
\put(160,122){$Z_i$}
\put(160,10){$Z_{i+1}$}
\end{overpic}
\caption{The ghost set is transversally attached to the critical locus at the birth-death set.}
\end{figure}
\end{center}

The ghost set is used to locally perturb $Z$ so that the critical points of the generalized Morse function do not cancel over the ghost set. 
In particular, we consider manifolds with corners $\widehat{Z_i}:=Z_i\cup Z_i^g$ and $\widehat{Z_{i+1}}:=Z_{i+1}\cup Z_i^g$ and we smooth both of these manifolds to obtain manifolds $\widetilde{Z_i}$ and $\widetilde{Z_{i+1}}$ that are locally diffeomorphic to $B$. These are depicted below. 

\begin{center}
\begin{figure}[H]
\begin{overpic}[abs,width=8cm]{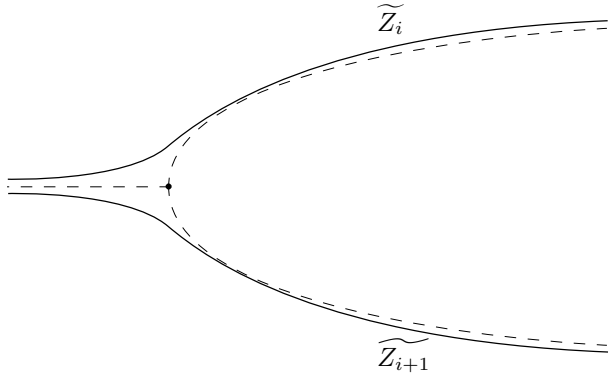}
\put(140,122){$\widetilde{Z_i}$}
\put(140,-5){$\widetilde{Z_{i+1}}$}
\end{overpic}
\caption{The manifolds with corners $\widehat{Z_i}$ and $\widehat{Z_{i+1}}$ are smoothed out to $\widetilde{Z_i}$ and $\widetilde{Z_{i+1}}$.}
\end{figure}
\end{center}

The outcome of this perturbation is that any sufficiently small simplex in the base which intersects the bifurcation set has the same number of critical points over each point in the simplex. 
This essential property of ghosts is used in~\cite{Igu05} in the proof of the `transfer theorem', and in~\cite{Ohr19} to give a combinatorial description of the Becker–Gottlieb transfer.

\subsection{Stratified deformations of critical loci}
\label{subsec:strat_def}

In this section we define \textit{stratified subsets} and \textit{stratified deformations}, and we construct a particular stratified deformation that will be used in Section \ref{subsec:PHSD} to prove Theorems \ref{thm:PHHSD} and \ref{thm:PHcubeSD}. 

\begin{defn}
A stratified subset of a smooth bundle $p:W\to B^k$ is a pair $(\Sigma, \psi)$ where $\Sigma$ is a compact smooth $k$-dimensional manifold together with a map $\rho:\Sigma\to B$ and a tangential structure $\psi: \Sigma\to X$. 
The map $\rho$ is everywhere smooth, but may admit birth-death singularities locally given by $\rho(x_1,\cdots, x_k) = (x_1^2,x_2,\cdots, x_k)$. 
These singularities form a $k-1$ dimensional submanifold of $\Sigma$. 
\end{defn}

We give two examples of stratified subsets, the first explains how to obtain the canonical example of a stratified subset from a fiberwise generalized Morse function. The second example introduces a special type of stratified subset called an \textit{immersed lens}. The particular stratified deformation discussed below begins with a critical locus of a fiberwise generalized Morse function and deforms it into a disjoint union of immersed lenses.

\begin{example}\label{ex:GMF_SD}
For this example the pair $(\Sigma,\psi)$ will be the stratified subset corresponding to the critical locus of a fiberwise generalized Morse function $f:(W,\partial_0W)\to(I,0)$ on the bundle $p:W\to B$. In this case, $\Sigma$ is the critical locus of $f$, and the map $\rho$ is the projection $p$ restricted to $\Sigma$.
The map $\psi$ will be a map $\Sigma\to BO\times BO$ classifying the stable negative eigenspace bundle of $f$ in the first component, and the stable positive eigenspace bundle of $f$ in the second component.
\end{example}

\begin{rmk}
There are two different stratifications on a stratified subset. The titular stratification refers to the stratification by dimension: each stratum is either of dimension $k$ or dimension $k-1$. Often this will not be the stratification that we are interested in. Instead, we will make use of the degree stratification which distinguishes by the degree of their critical points. For instance, the submanifold of the critical locus containing those critical points of degree $i$, previously denoted $Z_i$, is a stratum of the degree-wise stratification of $\Sigma_f$. We will denote the set of such strata by $\sS(\Sigma_f)$. 
\end{rmk}

\noindent The following is an example of a stratified subset concentrated in two degrees. 

\begin{example}[Immersed Lenses - p.70 in \cite{Igu05}]
Let $V$ be a compact connected $k$ manifold with boundary so that $V$ is immersed in $B^k$. 
Let $\psi_1, \psi_2:V\to X$ be continuous maps which are trivial on $\partial V$. 
Then the \emph{immersed lens} $L_i(V,\psi_1,\psi_2)$ is defined to be the stratified subset $(L,\psi_L)$ where $L$ is the double of $V$ in indices $i$ and $i+1$, and $\psi_L$ is $\psi_1$ on the lower stratum and $\psi_2$ on the upper stratum.  
\end{example}

\begin{defn}[p. 67 in~\cite{Igu05}]
A stratified deformation between stratified subsets $(\Sigma,\psi)$ and $(\Sigma',\psi')$ of $p:W\to B$ with coefficients in $X$ is a stratified subset $(S,\Psi)$ of $p\times I:W\times I\to B\times I$ with coefficients in $X$ so that the restrictions of $(S,\Psi)$ to  $p\times 0:W\times 0\to B\times 0$ and $p\times 1: W\times 1\to B\times 1$ are $(\Sigma,\psi)$ and $(\Sigma',\psi')$. 
In the event that $(\Sigma,\psi)$ and $(\Sigma',\psi')$ are related by a stratified deformation we say that they belong to the same \textit{ stratified deformation class} and use the notation  $(\Sigma,\psi)\sim(\Sigma',\psi')$.
\end{defn}

\begin{rmk}
Note that when considering a stratified deformation of a critical locus of a fiberwise generalized Morse function, the end result of the deformation may not necessarily be realized as the critical locus of a fiberwise generalized Morse function. 
When referencing the collection of strata of the stratified subset $(\Sigma,\psi)$ distinguished by their degree (in this case the dimension of the negative eigenspace bundle), we use the notation $\sS(\Sigma)$. When $\Sigma$ is the critical locus $\Sigma_f$, $\sS(\Sigma)$ is identical to $\sS(\Sigma_f)$. 
\end{rmk}

\noindent For the remainder of this section we fix $(\Sigma,\psi)$ as in Example~\ref{ex:GMF_SD}.

\begin{lemma}\label{lem:SD1}
There exists a stratified deformation between stratified subsets $(\Sigma,\psi)$ and $(\Sigma',\psi')$, so that the degree-wise strata of $(\Sigma',\psi')$ are concentrated in two consecutive degrees. 
Furthermore, each component of the lower stratum of $\Sigma'$ lies in a contractible subset of $\Sigma'$.
\end{lemma}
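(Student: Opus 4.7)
The plan is to construct the stratified deformation via a sequence of parametrized handle-trade moves in the spirit of Hatcher \cite{Hat75} and Igusa \cite{Igu84,Igu05}. Fix a target middle degree $i$; the goal is to deform $(\Sigma,\psi)$ so that all strata of degree $>i+1$ are eliminated, all strata of degree $<i$ are eliminated, and only the two consecutive degrees $i$ and $i+1$ remain. Each move will be realized as a stratified deformation rather than as a deformation of an honest fiberwise generalized Morse function, since stratified deformations are considerably more flexible: they permit the insertion and rearrangement of birth-death singularities without requiring that an underlying Morse structure persist throughout the deformation.

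The core step is a parametrized cancellation move. Given a stratum $Z_j$ with $j>i+1$, I would first insert a fiberwise birth of a cancelling pair of critical points in degrees $(j-1,j)$ via a stratified deformation that attaches a new sheet to $\Sigma$ along a birth-death curve. After this insertion, the newly created degree-$j$ component can be cancelled against $Z_j$ by arranging their tangential data to match and then sliding the associated birth-death curves, yielding a stratified deformation whose net effect on $(\Sigma,\psi)$ is to replace a stratum of degree $j$ by one of degree $j-1$. Descending induction on $j$ eliminates all strata of degree $>i+1$; a symmetric ascending induction using death-birth moves eliminates all strata of degree $<i$. Throughout, the tangential structure is propagated using the normal-form description of birth-death singularities from Proposition \ref{prop:gmf_normal_form}, so that $\psi'$ remains a well-defined map to the prescribed target space.

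To arrange the contractibility condition, once $(\Sigma,\psi)$ has been concentrated in degrees $i$ and $i+1$, I would perform additional stratified deformations that insert further birth-death arcs along embedded $(k-1)$-disks inside each component of the lower stratum, subdividing it finely. By iterating this subdivision until each resulting component of the lower stratum lies inside a chart small enough to be covered by a single birth-death neighborhood of the form described in Subsection \ref{subsec:ghosts}, each lower-stratum component ends up sitting inside a contractible subset of $\Sigma'$ (one may take, e.g., the union of the component with an attached collar in the upper stratum). The tangential structure on the new pieces is again determined by the eigenspace data of the inserted pair.

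The main obstacle is verifying that each individual cancellation is genuinely executable: one must arrange that the tangential structures on the paired strata match, and that the auxiliary birth-death curves introduced at each step can be placed in sufficiently general position so as not to create new strata in unwanted degrees or destroy the matching already arranged in previous steps. Both issues are combinatorial in nature and are handled by the handle-trading formalism developed by Igusa in \cite{Igu05}; the essential flexibility comes from working with stratified subsets and their deformations rather than with critical loci of actual fiberwise generalized Morse functions, which gives much greater freedom to manipulate the critical locus while retaining the tangential bookkeeping needed in Subsection \ref{subsec:PHTheta}.
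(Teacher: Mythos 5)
Your treatment of the first assertion (concentration in two consecutive degrees) is essentially the paper's argument: both use an inductive add-a-lens-then-cancel move, the "cancelling pair" you insert being precisely the twisted lens of Igusa~\cite{Igu05} that the paper cites, and both organize the cancellations by an induction on degree. That part of the proposal is aligned with the source.

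Your treatment of the contractibility assertion diverges from the paper, and as written it has a genuine gap. You propose to "insert further birth-death arcs along embedded $(k-1)$-disks inside each component of the lower stratum, subdividing it finely," i.e.\ to carve up $\Sigma_-$ by replacing thin strips of degree-$i$ material with degree-$(i+1)$ material. But such a replacement, taken by itself, is \emph{not} a stratified deformation: the degree function on a stratified subset can only jump across a birth-death locus, and at a birth-death locus the projection $\rho$ has a \emph{fold}, so the degree-$i$ sheet and the degree-$(i+1)$ sheet must both be present, meeting along the fold --- you cannot trade one for the other in place. Equivalently, the move you describe would change the alternating sum $\sum_{Z_i} (-1)^i e^d(\pi_{\widetilde{Z}_i})$, which is a stratified-deformation invariant (this is exactly the invariance underlying Theorem~\ref{thm:PHHSD}); a deformation realizing your subdivision therefore cannot exist. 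What can be done is to \emph{add} a twisted lens above a chosen piece of $\Sigma_-$ and then cancel its lower sheet against $\Sigma_-$, which trades material while preserving the invariant. This is precisely the paper's "mushroom" construction, carried out simplex by simplex over a triangulation of $\Sigma'$, following Step (c) of the transfer theorem in~\cite{Igu05} and Lemma 3.2.1 of~\cite{GI14}; the residual small $-$ piece sitting on the cap of the mushroom is what lands in a contractible subset. Your closing sentence ("determined by the eigenspace data of the inserted pair") suggests you may have had the lens-plus-cancellation in mind, but the argument as stated reads as a naive strip replacement and would need to be reformulated in terms of the lens-cancellation move before it goes through.
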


\begin{rmk}
Lemma~\ref{lem:SD1} is an excerpt from the proof of the transfer theorem in~\cite{Igu05}. 
In particular, the statement is identical to Step (c) on p.70, the proof of which appears on pages 71-73. 

\medskip

Briefly, the strategy of the proof is to first add and delete twisted lenses, immersed lenses for which $\psi_1$ is the same as $\psi_2$ after composition with the fold map, to concentrate the stratified subset into two consecutives degrees. 
The stratified deformations obtained by adding and deleting the twisted lenses reduce the number of components in the top degree by one. 
An inductive argument starting in the minimal stratum will then concentrate all strata in two degrees. 

\medskip

The next task is to prove that the lower stratum lies in a contractible subset of $\Sigma'$. 
To do this, we choose a triangulation of $\Sigma'$, and do a deformation on each simplex. On the zero simplices the idea is to add a lens above a zero simplex, give a stratified deformation that cancels the lower stratum of this lens to obtain a `mushroom', and then observe that the mushroom has the desired property: the `$-$' stratum on top of the mushroom (as well as it's boundary) lies in a contractible subset. 
We give a pictorial version of this stratified deformation in the figure below. 
This construction, as well as the inductive constructions for higher simplices, also appears with pictures in the proof of Lemma 3.2.1 in~\cite{GI14}. 

\begin{figure}[h]
\begin{center}
\begin{overpic}[width=14cm]{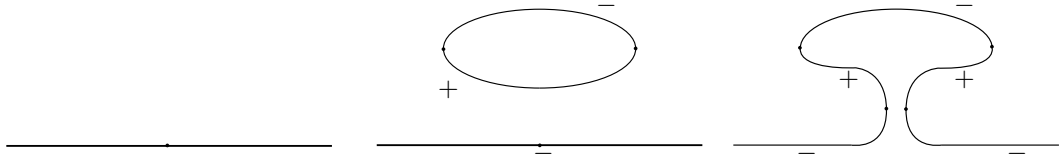}
\put(50,-1){$-$}
\put(41,5){$+$}
\put(56,13){$-$}
\put(75,-1){$-$}
\put(95,-1){$-$}
\put(79,6){$+$}
\put(90,6){$+$}
\put(90,13){$-$}
\end{overpic}
\caption{The stratified deformation introduces a lens above a designated point in $\Sigma_-$ and then cancels the + and - strata to obtain a `mushroom'.}
\end{center}
\end{figure}

\end{rmk}

\begin{lemma}[Lemma 5.7 in~\cite{Igu05}]\label{lem:SD2}
The stratified subset $(\Sigma',\psi')$ can be deformed into a stratified subset $(\Sigma_{\text{SD}},\psi_{\text{SD}})$ presented as a disjoint union of immersed lenses and components on which $\psi_{\text{SD}}$ is trivial. 
Furthermore, $\psi_{\text{SD}}$ is trivial on the lower stratum of each of the immersed lenses. 
\end{lemma}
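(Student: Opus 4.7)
The plan is to use the contractibility hypothesis from Lemma~\ref{lem:SD1} to first trivialize the tangential structure on the lower stratum, and then to excise lens-shaped components from what remains. Throughout, the deformations will be performed component-by-component on the lower stratum, so one may assume without loss of generality that the lower stratum of $(\Sigma',\psi')$ is connected, with a single contractible neighborhood $U\subset \Sigma'$.

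First I would establish the following reduction: there exists a stratified deformation from $(\Sigma',\psi')$ to a stratified subset $(\Sigma'',\psi'')$ whose underlying manifold agrees with $\Sigma'$ and for which $\psi''$ is trivial on every component of the lower stratum. The key observation is that if a component $C$ of the lower stratum lies in a contractible subset of $\Sigma'$, then the restriction $\psi'|_C$ is nullhomotopic, so we may fix a nullhomotopy $H\colon C\times I\to X$ from $\psi'|_C$ to a constant map. The plan is to realize $H$ as the tangential data of an auxiliary twisted lens attached near $C$ in the stratified deformation $S$ over $B\times I$, in the same fashion as the twisted-lens cancellations used in the proof of Lemma~\ref{lem:SD1}. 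Because the nullhomotopy only needs to be carried on a neighborhood of $C$, and that neighborhood is already contractible, the lens may be introduced and subsequently cancelled without affecting the rest of the stratified subset.

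Next I would decompose the resulting stratified subset $(\Sigma'',\psi'')$ into the desired pieces. Since $\psi''$ is now trivial on the lower stratum, each component $C$ of the lower stratum is glued along its birth-death boundary $\partial C$ to some piece $V_C$ of the upper stratum. The plan is to perform further stratified deformations adding and deleting twisted lenses to arrange that $V_C$ is diffeomorphic, rel $\partial C$, to a copy of $C$; together these form an immersed lens $L_i(C,\ast,\psi''|_{V_C})$ whose lower stratum has trivial tangential structure, as required. The remaining parts of $\Sigma''$ are components of the upper stratum that are not attached to any lower-stratum component after this splitting; these are closed manifolds, and I would finish by applying a similar twisted-lens cancellation argument to deform their tangential structure to a trivial one, yielding the components on which $\psi_{\text{SD}}$ is trivial in the statement.

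The main obstacle is the first step: constructing an honest stratified deformation (in the sense of a stratified subset of $W\times I$ over $B\times I$ with only the permitted birth-death singularities) that realizes the nullhomotopy of $\psi'|_C$ without disturbing the surrounding strata. The contractibility of $U$ supplies the homotopy-theoretic input, but translating it into a geometric stratified deformation requires careful bookkeeping of the twisted lenses introduced near $\partial C$, essentially recapitulating the inductive simplex-by-simplex argument of~\cite{Igu05} used to prove Lemma~\ref{lem:SD1}. Once this is in hand, the remaining steps reduce to standard manipulations of immersed lenses.
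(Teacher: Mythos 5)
The paper does not reproduce a proof of this lemma: it is stated as a direct citation of Lemma~5.7 in Igusa~\cite{Igu05}. Your outline --- trivialize $\psi'$ on the lower stratum, then excise lenses, then trivialize the residual --- has a reasonable overall shape, but you have misjudged where the difficulties lie, and two of your three steps do not hold up as sketched.

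For the first step, the auxiliary twisted lens is unnecessary. The nullhomotopy $H\colon C\times I\to X$ supplied by contractibility can be realized directly by taking the cylinder $\Sigma'\times I$, with projection $\rho\times\mathrm{id}\colon\Sigma'\times I\to B\times I$, as the stratified subset of $W\times I$ over $B\times I$, and declaring its tangential data to be $H$ on $C\times I$ and constant in the $I$-direction elsewhere. This is already an honest stratified deformation from $(\Sigma',\psi')$ to $(\Sigma',\psi'')$; no simplex-by-simplex bookkeeping in the style of Lemma~\ref{lem:SD1} is required here.

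The genuine obstacles are your second and third steps, which you relegate to ``standard manipulations.'' In the second step you speak of ``the piece $V_C$ of the upper stratum'' attached to a lower-stratum component $C$, but before any further deformation the upper stratum is a single $k$-manifold whose boundary is the union of \emph{all} the birth-death loci, and there is no preferred partition of it into one piece per $C$. Excising an immersed lens requires introducing a new cancelling pair of birth-death loci along a sphere sitting inside the contractible subset of Lemma~\ref{lem:SD1}, and this surgery produces new lower-stratum material whose tangential structure must in turn be controlled; your sketch neither performs this surgery nor accounts for its side effects. In the third step, the claim that one can ``deform their tangential structure to a trivial one'' by a ``similar twisted-lens cancellation argument'' is where the plan fails outright: adding or removing a twisted lens $L_i(V,\psi,\psi)$ leaves the tangential data on any pre-existing closed upper-stratum component unchanged, so there is no mechanism by which the restriction of $\psi''$ to a leftover closed component becomes trivial. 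The component with trivial $\psi_{\mathrm{SD}}$ appearing in the statement is an artifact of how the lens excision is performed, not a state that can be reached by post hoc cancellations on a residual carrying nontrivial tangential data; this is the idea your outline is missing.
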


\noindent Let $\Lambda$ denote the component of $\Sigma_\sd$ on which $\psi_\sd$ is trivial. 

\begin{lemma}\label{lem:null_deformable}
An integer multiple of the stratified subset $(\Lambda,*)$ is stratified null-deformable. 
\end{lemma}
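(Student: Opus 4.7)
The plan is to exploit the two key features of $(\Lambda,*)$: its tangential structure is trivial, and (inheriting this from $\Sigma_\sd$) it is concentrated in two consecutive degrees, say $i$ and $i+1$, so $\Lambda$ decomposes into strata $\Lambda_i$ and $\Lambda_{i+1}$ meeting along a codimension-one birth--death submanifold. Because $\psi_\sd$ is trivial on $\Lambda$, the stratified deformation class of $(\Lambda,*)$ is determined purely by the underlying unoriented manifold data of $\rho \colon \Lambda \to B$ together with the degree labeling, so the problem reduces to a question in a bordism-type group of ``degree-labeled'' stratified subsets of $W$ with trivial tangential structure.

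First I would argue, by the same triangulate-and-deform-simplex-by-simplex strategy used in the proof of Lemma~\ref{lem:SD1} (and pictured in the ``mushroom'' construction), that each component of $\Lambda$ can be further stratified-deformed into a disjoint union of immersed lenses $L_i(V,*,*)$ with trivial $\psi_1 = \psi_2 = *$. The point is that once the tangential structure is trivial, the twisted-lens cancellation arguments of Lemma~\ref{lem:SD1} apply formally and cleanly, because every pair of adjacent $+$ and $-$ strata is eligible for cancellation. This reduces the lemma to showing that some integer multiple of each immersed lens $L_i(V,*,*)$ is stratified null-deformable.

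Next I would try to null-deform $n\cdot L_i(V,*,*)$ by realizing it as the boundary of a stratified subset of $W \times I$. Concretely, if $V \looparrowright B$ can be lifted to an immersion of a compact $(k+1)$-manifold $U$ with $\partial U = nV$ into $B \times I$ restricting to the given immersion of $V$ at the boundary, then one can build a stratified subset of $W\times I$ whose two degree-$i$ and degree-$(i+1)$ strata are both $U$, identified along a birth--death fold at $\partial U$; its restriction to $B\times 0$ is $n\cdot L_i(V,*,*)$ and its restriction to $B\times 1$ is empty. The existence of such a $U$ for some $n > 0$ is the content of the relevant finiteness theorem: the unoriented bordism class of an immersed $k$-manifold in $B$ sits in a finitely generated group, so its torsion vanishes after multiplication by a suitable integer, and on a free summand one can choose an orientation so that $V$ and $-V$ cancel in $2V$.

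The main obstacle I expect is the final step: carefully lifting the null-bordism of $nV$ in $B$ to a genuine stratified deformation in $W$ that simultaneously respects the degree stratification and the birth--death normal form. In particular, one must make sure that the interior of $U$ can be realized by a $(k+1)$-dimensional submanifold of $W\times I$ whose projection to $B\times I$ is $U$ and whose two ``sheets'' close up along the boundary $\partial U$ with the correct birth--death local model $\rho(x_1,\ldots,x_k) = (x_1^2, x_2,\ldots,x_k)$; this can be arranged by choosing a tubular neighborhood of $\partial U$ in $U$ and installing the standard fold model there, which is where the rest of the proof amounts to a routine, if technical, extension argument.
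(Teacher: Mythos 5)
Your proposal diverges from the paper's proof in an important way, and the divergence introduces a genuine gap. The paper's argument is short and direct: because $\Lambda$ has trivial tangential structure and is concentrated in two adjacent degrees, scanning it over $B$ produces a map $B \to QS^0$ (configuration space of $\pm$ particles), whose homotopy class is precisely the stratified deformation class. Since the map lands in the zero component and $Q_0 S^0$ has finite higher homotopy groups (Serre), the group $[B, Q_0 S^0]$ is finite, so the class is torsion. That finiteness is the entire content of the lemma.

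Your version substitutes a reduction to immersed lenses $L_i(V,*,*)$ followed by a bordism argument about the immersed manifold $V \looparrowright B$. The reduction step is plausible (and close in spirit to the proof of Lemma~\ref{lem:SD1}), though it is not required by the paper and would need its own justification. The genuine problem is the next step. You assert that ``the unoriented bordism class of an immersed $k$-manifold in $B$ sits in a finitely generated group, so its torsion vanishes after multiplication by a suitable integer, and on a free summand one can choose an orientation so that $V$ and $-V$ cancel in $2V$.'' This argument does not hold up. First, the invariant you need is the stratified deformation class of the lens, not the bordism class of $V$ in isolation --- the degree labeling and birth--death fold structure are extra data, and the correct receptacle is $[B, Q_0 S^0]$, not an unoriented bordism group of $V$. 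Second, and more fatally, ``finitely generated'' is not the same as ``torsion'': a free summand would not die after multiplying by any integer, and the claim that one can ``choose an orientation so that $V$ and $-V$ cancel in $2V$'' is not a valid way to dispose of a free part --- that reasoning would only be correct if you already knew the class was $2$-torsion, which is what you are trying to prove. The missing ingredient is exactly the one the paper supplies: the rational triviality of $Q_0 S^0$, i.e.\ the finiteness of the stable homotopy groups of spheres. Without invoking that (or an equivalent finiteness theorem identified in the correct group), the proof does not close.
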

\begin{proof}
Since $\Lambda$ is concentrated in two degrees, $\Lambda$ gives a map from $B$ into the configuration space of positive and negative particles which is homotopy equivalent to $QS^0$. Since $QS^0$ is rationally trivial in degrees greater than 0, and this map lands in the zero component of $\pi_0(QS^0)=\Z$, some positive integer multiple of $(\Lambda,*)$ must be stratified null-deformable.  
\end{proof}

\begin{rmk}
In Section \ref{subsec:PHSD} the stratified deformation between $(\Sigma,\psi)$ and $(\Sigma_{\text{SD}}, \psi_{\text{SD}})$ is used to prove a fiberwise Poincar\'e–Hopf theorem that factors the Becker–Gottlieb transfer in terms of $(\Sigma_{\text{SD}}, \psi_{\text{SD}})$. In the proof of Theorem \ref{thm:PHHSD} we make use of ghost sets on $\Sigma_{\text{SD}}$, which are defined on arbitrary stratified subsets identically to how they are defined on $\Sigma_f$. 
\end{rmk}

\section{Fiberwise Poincar\'e–Hopf theorems}
\label{sec:PHthms}

This section contains several fiberwise Poincar\'e–Hopf theorems, the main technical results of this paper. Roughly speaking, a fiberwise Poincar\'e–Hopf theorem computes a characteristic in terms of Morse theoretic data, in analogy with the classical Poincar\'e–Hopf theorem. The results in this section are preceded in the literature by computations of the Becker–Gottlieb transfer due to~\cite{BM76} and~\cite{Dou06}. 

\medskip

In Subsection~\ref{subsec:PHthms_setup} we fix notation and introduce the indexing categories that are used to give refinements of our characteristics as Euler sections. These are the Poincar\'e duals of the usual characteristics. The constructions of the characteristics appearing in this subsection are used several times over in the subsequent subsections. 

\medskip

In Subsection~\ref{subsec:QthyPH} we prove a fiberwise Poincar\'e–Hopf theorem for the Becker–Gottlieb transfer. To be exact, we factor the Poincar\'e dual of the Becker–Gottlieb transfer, and Euler section $e^d(p)$, in terms of the critical locus of a fiberwise generalized Morse function.  

\medskip

In Subsection~\ref{subsec:AthyPH} we prove a fiberwise Poincar\'e–Hopf theorem for the excisive A-theory Euler characteristic. To be exact once again, we factor the Poincar\'e dual of the excisive A-theory Euler characteristic, and Euler section $e^t(p)$, in terms of the critical locus of a fiberwise generalized Morse function. We also prove that the results of Subsections~\ref{subsec:QthyPH} and \ref{subsec:AthyPH} are compatible. 

\medskip

In Subsection~\ref{subsec:PHSD} we generalize the results of the previous two sections to an arbitrary stratified deformation of the critical locus of a fiberwise generalized Morse function. We also translate the theorems of this section into rational formulas in $\pi_0$ for use in Subsection~\ref{subsec:PHTheta} and Section~\ref{sec:ThetaCalc}.

\medskip

In Subsection~\ref{subsec:PHTheta} we use the rational formulas of the previous section, as well as the stratified deformation constructed in Subsection~\ref{subsec:strat_def} to prove a fiberwise Poincar\'e–Hopf theorem for the smooth structure class. In contrast to the previous fiberwise Poincar\'e–Hopf theorems, Theorem~\ref{thm:PHTheta} is a rational statement. 

The contents of this section build towards Theorem~\ref{thm:PHTheta}, which is the only theorem from this section used in Section~\ref{sec:ThetaCalc}.

\subsection{Definitions and a recollection of Dwyer–Weiss–Williams index theory}
\label{subsec:PHthms_setup}

For the subsections that follow we will fix a smooth manifold bundle $p:W\to B$, where $W$ is a compact smooth manifold of dimension $m$, $B$ is a compact smooth manifold of dimension $k$, and the fiber of $p$ is a compact smooth manifold $F$ of dimension $n$. 
We fix an embedding of $W$ into $B\times\R^d$ over $B$ for $d$ large. We also fix a fiberwise generalized Morse function $f:W\to \R$, with critical locus $\Sigma^k_f$ and vertical gradient vector field $X:=\nabla^\vee f$.
We denote by $\pi$ the restriction of the projection $p$ to $\Sigma_f$. 
We denote by $\eta$ the normal bundle to the embedding of $\Sigma_f$ into $W$, which is an $(m-k)$-plane bundle on $\Sigma_f$.
The tubular neighborhood of $\Sigma_f$ in $W$ is a disk bundle $q:D\Sigma_f\to \Sigma_f$, and the restriction of $p$ to $D\Sigma_f$ is a map $\psi:D\Sigma_f\to B$. 
These choices are summarized in the following diagram. 
We also fix the notation $\tau$ and $\nu$ for, respectively, the vertical tangent bundle and the vertical normal bundle of $W$.

$$
\xymatrix{
&&&&F^n\ar[d]\ar[r]&\R^d\ar[d]\\
\Sigma_f^k\ar@/_/[drrrr]_\pi\ar@<.5ex>[rr]&&D\Sigma_f^m\ar[rrd]^\psi\ar@<.5ex>[ll]^q\ar[rr]&&W^m\ar[d]_p\ar[r]&B\times\R^d\ar[ld]\\
&&&&B^k\\
}
$$

\medskip

Next we introduce indexing categories associated to the manifolds above, which will be used to construct the characteristics considered in subsequent subsections. 

\begin{defn}\label{def:disksInBase}
The category $\disk_k^{B/}$ is the category of one point compactifications of $k$-disks embedded in $B$. More precisely, an object $U\in\disk_k^{B/}$ is the one point compactification of an open disk $\R^k$ embedded in $B$. An open embedding $\R^k\hookrightarrow \R^k\hookrightarrow B$ gives rise to a morphism $U'\to U$ given by the one point compactification of the open embedding $\R^k\hookrightarrow\R^k$.
\end{defn}

\begin{rmk}
To ease the notation used for indexing the diagrams the follow, we will refer to objects of $\disk_k^{B/}$ and related categories without the bullet superscript, e.g. $U$ instead of $U^\bullet$. However, when defining morphisms we will be precise. In particular, we will use $U$ when referring to the open $k$-plane embedded in $B$, and use $U^\bullet$ when referring to the one point compactification of $U$. 
\end{rmk}

\begin{rmk}
The category $\disk_k^{B/}$ of Definition~\ref{def:disksInBase} is similar to the category $\left(\Disk_n^+\right)^{B_*/}$ used in~\cite{AF19}, with the only difference being that the $n$-disks used here have only one component. 
\end{rmk}

\begin{defn}\label{defn:disksInFiber}
For fixed $U\in\disk_k^{B/}$, we define the category $\disk_m^{p^{-1}U/}$ to be the category of one point compactifications of $m$-disks in $p^{-1}U$. More precisely, an object $V$ in $\disk_m^{p^{-1}U/}$ is the one point compactification of an $m$ disk $\R^k\times\R^n\cong \R^m$ embedded in $p^{-1}U$ so that the composition $\R^k\times\R^n\to p^{-1}U\xrightarrow{p}U$ factors through the projection $\R^k\times\R^n\to \R^k$. A morphism from $V'$ to $V$ is given by the one point compactification of an open embedding $\R^k\times\R^n\to\R^k\times \R^n$ that commutes with the projection maps to $\R^k$. 
\end{defn}

\begin{defn}
The category $\disk_m^{\psi^{-1}U/}$ is a subcategory of $\disk_m^{p^{-1}U/}$ containing those disks $V$ for which the embedding $\R^k\times\R^n\hookrightarrow{p^{-1}U}\hookrightarrow W$ factors through the embedding $D\Sigma_f\hookrightarrow W$.
\end{defn}

The proposition below indicates how the categories $\disk_k^{B/}$ and $\disk_m^{p^{-1}U/}$ are used to model the homotopy types of $\Gamma_BQ_B(W)$, $\Gamma_BA^\%_B(W)$, and $\Gamma_B\h^\%_B(W)$. These are the spaces of sections of the fiberwise homology bundles whose fibers are $Q(F_+):=\Omega^\infty(F_+\wedge\S)$ for $\S$ the sphere spectrum, $A^\%(F):= \Omega^\infty(F_+\wedge A(*))$ for $A(*)$ the algebraic K-theory of spaces functor evaluated at a point, and $\h^\%(F):=\Omega^\infty(F_+\wedge \h(*))$ for $\h(*)$ the stable h-cobordism space of a point. 

\begin{prop}
The following are homotopy equivalences: 
\begin{align}
\Gamma_BQ_B(W)&\xrightarrow{\simeq}\holim_{U\in \disk_q^{B/}}\holim_{V\in\disk_m^{p^{-1}U/}}
\Omega^\infty(V^\bullet\wedge \S)\\
\Gamma_BA^\%_B(W)&\xrightarrow{\simeq}\holim_{U\in \disk_q^{B/}}\holim_{V\in\disk_m^{p^{-1}U/}} \Omega^\infty(V^\bullet\wedge A(*))\\
\Gamma_B\h^\%_B(W)&\xrightarrow{\simeq}\holim_{U\in \disk_q^{B/}}\holim_{V\in\disk_m^{p^{-1}U/}} \Omega^\infty(V^\bullet\wedge \h(*))
\end{align}
\end{prop}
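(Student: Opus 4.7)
The three equivalences are all instances of the same two-step descent argument, and only the coefficient spectrum $\S$, $A(*)$, or $\h(*)$ varies. The plan is first to reduce to the case where the base $B$ is a single Euclidean chart, and then to identify the resulting fiber with the inner homotopy limit via a descent/Atiyah duality argument for the total manifold.

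For the outer step, I would show that both sides of each equivalence satisfy homotopy descent with respect to the cover of $B$ by Euclidean charts. On the left, $\Gamma_B Q_B(W)$ is the space of sections of a parametrized infinite loop space, and section spaces of such objects form a sheaf on $B$; they can therefore be computed as a homotopy limit over any sufficiently cofinal family of opens. The category $\disk_k^{B/}$ provides exactly such a family: connected Euclidean charts in $B$, with morphisms the (compactified) open embeddings between them. On the right, the outer homotopy limit is formal once the inner expression is known to depend functorially on $U$. This reduction uses only standard cofinality and descent techniques in parametrized homotopy theory, close to arguments already employed by Dwyer--Weiss--Williams in~\cite{DWW03}.

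For the inner step, fix $U \in \disk_k^{B/}$. Since $U$ is contractible, the bundle $p^{-1}U \to U$ is trivializable and the parametrized infinite loop space $Q_B(W)|_U$ is a trivial bundle with fiber $Q((p^{-1}U)_+)$, so sections over $U$ reduce up to homotopy to $Q((p^{-1}U)_+)$. The content of the proposition is then the identification
\[
Q((p^{-1}U)_+) \;\simeq\; \holim_{V \in \disk_m^{p^{-1}U/}} \Omega^\infty(V^\bullet \wedge \S).
\]
This is essentially a descent statement for the functor $V \mapsto V^\bullet$ on the category of disks in the $m$-manifold $p^{-1}U$, together with an application of Atiyah duality: the one-point compactification, made contravariant via Pontryagin--Thom collapse along open embeddings, assembles into a sheaf of spectra whose value on the entire manifold computes $\Sigma^\infty((p^{-1}U)_+)$ after looping to an infinite loop space. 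Verifying this requires checking cofinality of $\disk_m^{p^{-1}U/}$ inside the full category of opens of $p^{-1}U$, which is the main technical point.

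For the remaining two equivalences the argument is identical: the descent and duality steps only use that $\S$, $A(*)$, and $\h(*)$ are spectra and that $\Omega^\infty(-\wedge E)$ commutes with the relevant homotopy limits. Moreover, since $\h(*)$ is the homotopy fiber of the unit map $\S \to A(*)$, the third equivalence follows formally from the first two by passing to homotopy fibers. The principal obstacle I expect is the careful bookkeeping around the morphisms in the disk categories and the contravariant (Pontryagin--Thom) direction of the compactification functor: these must be arranged so that the diagram whose homotopy limit is being computed is really a coherent diagram of spectra. Once that is set up correctly, descent and cofinality yield the stated equivalences.
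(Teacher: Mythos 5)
The paper disposes of this proposition in one line, citing the Poincar\'e duality machinery of Dwyer--Weiss--Williams and Ayala--Francis, so your instinct to unpack it via disk-indexed homotopy limits and duality is on target. However, your outline misidentifies what each of the two homotopy limits is computing, and the discrepancy is precisely where the real content of the cited Poincar\'e duality lives.

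The first problem is in the ``outer'' reduction. You treat $\disk_k^{B/}$ as a cofinal family of \emph{opens} and invoke sheaf descent for $\Gamma_B Q_B(-)$: restrict to each chart $U$, compare, and glue. But the morphisms in $\disk_k^{B/}$ are Pontryagin--Thom collapse maps between one-point compactifications, not inclusions with restriction maps; a homotopy limit over this category is not a \v{C}ech-type limit of section spaces. The passage from $\Gamma_B$ to a homotopy limit over $\disk_k^{B/}$ of compactly-supported data is itself the Poincar\'e duality step for $B$, not a formal sheaf-gluing. Relatedly, the reduction ``both sides satisfy descent, hence check over each $U$'' cannot be made term-by-term, because the value the right-hand side assigns to a single $U$ is \emph{not} the section space $\Gamma_U Q_U(p^{-1}U)$.

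The second problem is the stated inner equivalence. You claim
$Q\bigl((p^{-1}U)_+\bigr) \simeq \holim_{V\in\disk_m^{p^{-1}U/}} \Omega^\infty(V^\bullet\wedge\S)$,
but (as the paper records immediately after the proposition) the correct identification is
\[
\holim_{V\in\disk_m^{p^{-1}U/}} \Omega^\infty(V^\bullet\wedge\S) \;\simeq\; \Omega^\infty\bigl((p^{-1}U)^\bullet\wedge\S\bigr),
\]
with $(p^{-1}U)^\bullet$ the one-point compactification, not the disjoint-basepoint $(p^{-1}U)_+$. For $U\cong\R^k$ and compact fiber $F$ this is $\Omega^\infty(S^k\wedge F^\bullet\wedge\S)$, which differs from $Q(F_+)$ by a $k$-fold suspension (plus the boundary quotient if $F$ has boundary). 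This twist is exactly what the outer homotopy limit over the collapse-morphism category $\disk_k^{B/}$ is designed to absorb, but your outline does not account for it, so the two halves of your argument will not fit together as written. Your final remark, that the $\h(*)$ case follows formally from $\S$ and $A(*)$ by passing to fibers of the unit map, is correct and matches the paper's framing.

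With these two corrections --- replacing the sheaf-gluing intuition by the actual Poincar\'e duality for $B$ (over the collapse category), and replacing $(p^{-1}U)_+$ by $(p^{-1}U)^\bullet$ in the inner step --- your argument becomes essentially the one-line argument the paper gestures at, namely two nested applications of (nonabelian) Poincar\'e duality, one over $B$ and one fiberwise.
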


\begin{proof}
These homotopy equivalences follow from Poincar\'e duality, e.g. \cite{DWW03} or nonabelian Poincar\'e duality from \cite{AF19}. 
\end{proof}

\subsubsection{Constructions of characteristics}

We will now construct refinements of the Becker–Gottlieb transfer $\tr(p)\in\Gamma_BQ_B(W)$ and the excisive A-theory Euler characteristic $\chi^\%(p)\in\Gamma_BA_B^\%(W)$. First we recall the Euler sections of~\cite{Bec70} and~\cite{DWW03}. 

\medskip

Let $\gamma(n)$ be the tautological bundle on $\btop(n)$, and let $[e^t_n]\in H^{\gamma(n)}(\btop(n);A(*))$, the cohomology of $\btop(n)$ with twisted coefficients in the spectrum $A(*)$, denote the generalized Becker–Euler class defined in~\cite{Bec70}. The class $[e^t_n]$ is refined in~\cite{DWW03} to a section $e_n^t$ of the bundle with base $\btop(n)$ and fiber $\Omega^\infty(\gamma(n)_x^\bullet\wedge A(*))$ over $x\in \btop(n)$. Given a manifold $M$ with tangent Euclidean bundle $\tau$ classified by a map $M\to \btop(n)$, the associated Euler section $e^t_n(\tau)$ is defined as the pullback of $e^t_n$ to a section of the bundle over $M$ with fibers $\Omega^\infty(\tau^\bullet_x\wedge A(*))$ over $x\in M$. 

\medskip

Similarly, let $\epsilon(n)$ be the tautological bundle on $\bo(n)$, and consider the Becker–Euler class $[e^d_n]\in H^{\epsilon(n)}(\bo(n);\S)$, the cohomology of $\bo(n)$ with twisted coefficients in the sphere spectrum. The class $[e^d_n]$ is refined in~\cite{DWW03} to a section $e_n^d$ of the bundle with base $\bo(n)$ and fiber $\Omega^\infty(\epsilon(n)_x^\bullet\wedge \S)$ over $x\in \bo(n)$. Given a manifold $M$ with tangent bundle $\tau$ classified by a map $M\to \bo(n)$, the associated Euler section $e^d_n(\tau)$ is defined as the pullback of $e^d_n$ to a section of the bundle over $M$ with fibers $\Omega^\infty(\tau^\bullet_x\wedge \S)$ over $x\in M$. 

\medskip

For $U\in\disk_k^{B/}$ and $V\in\disk_m^{p^{-1}U/}$, let $c$ denote the Thom collapse map $U^\bullet\wedge S^d\to V^\bullet\wedge\Th(\nu|_V)$ associated to the embedding $V\hookrightarrow U\times \R^d$. Let $e^d_n(\tau):\Th(\nu|_V)\wedge S^0\xrightarrow{\id\wedge e_n^d(\tau)} \Th(\nu|_V)\wedge\Th(\tau|_V)\wedge\S$ denote the restriction of the Euler section $e^d_n(\tau)$ on $M$ to $V$. At a point in $\Th(\nu|_V)$ over $x\in V$, the map $e_n^d(\tau)$ is the Euler section at $x$, i.e. a map $S^0\to \tau^\bullet_x\wedge\S$.  Then we consider the composition below:

\begin{equation}\label{eqn:char_map_d}
    U^\bullet\wedge S^d\wedge S^0\xrightarrow{c\wedge \id}V^\bullet\wedge\Th(\nu|_V)\wedge S^0\xrightarrow{\id\wedge e_n^d(\tau)}V^\bullet\wedge\Th(\nu|_V)\wedge\Th(\tau|_V)\wedge \S
\end{equation}

\medskip\noindent Identifying $\Th(\nu|_V)\wedge\Th(\tau|_V)$ as $\S^d$ and taking adjoints, we equivalently have a map

$$
U^\bullet\wedge S^0\to \Omega^d\Omega^\infty(V^\bullet\wedge S^d\wedge\S).
$$

\medskip\noindent For simplicity, we further compose with a homotopy equivalence given by the inclusion $\Omega^d\Omega^\infty(V^\bullet\wedge S^d\wedge\S)\to \Omega^\infty(V^\bullet\wedge\S)$ to obtain a map 

$$
U^\bullet\wedge S^0\to \Omega^\infty(V^\bullet\wedge\S)
$$

\medskip\noindent The construction above is natural in $V$, and thus yields a map

$$
U^\bullet\wedge S^0\to \displaystyle\holim_{V\in\disk_m^{p^{-1}U/}}\Omega^\infty(V^\bullet\wedge\S)
$$

\medskip\noindent Naturality in $U$ then produces a map

$$
S^0\xrightarrow{e^d(p)} \displaystyle\holim_{U\in\disk_k^{B/}}\displaystyle\holim_{V\in\disk_m^{p^{-1}U/}}\Omega^\infty(V^\bullet\wedge\S)
$$

\medskip\noindent which we denote by $e^d(p)$ and refer to as the \textit{Euler section of $p$ in $\Gamma_BQ_B(W)$}.

\medskip

\medskip

\noindent Similarly, considering the composition 
\begin{equation}\label{eqn:char_map_t}
    U^\bullet\wedge S^d\wedge S^0\xrightarrow{c\wedge \id}V^\bullet\wedge\Th(\nu|_V)\wedge S^0\xrightarrow{\id\wedge e_n^t(\tau)}V^\bullet\wedge\Th(\nu|_V)\wedge\Th(\tau|_V)\wedge A(*)
\end{equation}
results in a map 
$$
S^0\xrightarrow{e^t(p)} \displaystyle\holim_{U\in\disk_k^{B/}}\displaystyle\holim_{V\in\disk_m^{p^{-1}U/}}\Omega^\infty(V^\bullet\wedge A(*))
$$

\medskip\noindent which we denote by $e^t(p)$ and refer to as the \textit{Euler section of $p$ in $\Gamma_BA^\%_B(W)$}.

\subsubsection{Recollections from Dwyer–Weiss–Williams}

\begin{prop}\label{prop:DWW_vanishing}
With the vertical map induced by the unit map $\eta:\S\to A(*)$, the following diagram is homotopy commutative:
\medskip
$$
\xymatrix{
S^0\ar[rrrr]^{e^d(p)}\ar[rrrrd]^{e^t(p)}&&&&\displaystyle\holim_{U\in \disk_k^{B/}}\displaystyle\holim_{V\in\disk_m^{p^{-1}U/}}
\Omega^\infty(V^\bullet\wedge \S)\ar[d]^{\eta_*}\\
&&&&\displaystyle\holim_{U\in \disk_k^{B/}}\displaystyle\holim_{V\in\disk_m^{p^{-1}U/}} \Omega^\infty(V^\bullet\wedge A(*))
}
$$
\end{prop}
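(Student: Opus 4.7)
The plan is to reduce the claim to the compatibility of the universal smooth and topological Euler sections under the unit map $\eta$, and then observe that the rest of the construction is natural in the coefficient spectrum.

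First I would unwind the definitions: the maps $e^d(p)$ and $e^t(p)$ are assembled from the same template — compositions (\ref{eqn:char_map_d}) and (\ref{eqn:char_map_t}) respectively — differing only in the choice of Euler section (the smooth $e_n^d(\tau)$ versus the topological $e_n^t(\tau)$) and in the coefficient spectrum ($\S$ versus $A(*)$). The Thom collapse $c:U^\bullet\wedge S^d\to V^\bullet\wedge\Th(\nu|_V)$ does not involve the coefficient spectrum, and neither does the adjunction nor the inclusion $\Omega^d\Omega^\infty(-\wedge S^d\wedge E)\to\Omega^\infty(-\wedge E)$ used to reach the final target. Consequently, applying $\eta_*$ pointwise to the composition (\ref{eqn:char_map_d}) produces (\ref{eqn:char_map_t}) on the nose once one identifies $\eta_*\circ e_n^d(\tau)\simeq e_n^t(\tau)$.

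Next I would invoke this identification at the universal level. By construction in~\cite{DWW03}, the section $e_n^t$ on $\btop(n)$ refines the Becker--Euler class with coefficients in $A(*)$, while $e_n^d$ on $\bo(n)$ refines its smooth analogue with coefficients in $\S$. Both refinements are assembled from units of the same fiberwise Spanier--Whitehead duality, and the unit map $\eta\colon\S\to A(*)$ intertwines them; in particular, the universal diagram relating $e_n^d$ and $e_n^t$ through $\eta_*$ and the classifying map $\bo(n)\to\btop(n)$ commutes up to homotopy. Pulling back along the classifying map $W\to\bo(n)\to\btop(n)$ of the vertical tangent bundle $\tau$ yields the desired $\eta_*\circ e_n^d(\tau)\simeq e_n^t(\tau)$ as sections of the appropriate fiberwise homology bundle on $W$, and hence on each $V\in\disk_m^{p^{-1}U/}$ by further restriction.

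Finally, the homotopy from the previous step is natural in $V$, since Thom collapse, the Euler section, and $\eta_*$ are all natural under the open embeddings defining the morphisms in $\disk_m^{p^{-1}U/}$; the same holds in $U\in\disk_k^{B/}$. These homotopies therefore descend to the iterated homotopy limit and produce the required $\eta_*\circ e^d(p)\simeq e^t(p)$. I do not anticipate a substantive obstacle: the content is formal once the universal compatibility of $e_n^d$ and $e_n^t$ is in hand. The only care required is bookkeeping with the naturality of the Thom collapse and the Euler sections across both indexing categories, so that the universal homotopy assembles coherently through the double $\holim$.
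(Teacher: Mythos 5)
Your proposal is correct and follows essentially the same route as the paper: reduce the assertion to the compatibility of the universal Euler sections $\eta_* e_n^d$ and $e_n^t$, and then observe that everything else in the assembly of $e^d(p)$ and $e^t(p)$ (Thom collapse, adjunctions, passage to the iterated $\holim$) is natural in the coefficient spectrum and in the disk categories. Where you write that the two universal refinements are intertwined by $\eta$ because both are built from the same fiberwise Spanier--Whitehead duality, the paper instead cites this as Theorem~4.13 (informally Theorem~4.10) of~\cite{DWW03}; that citation is the one substantive input, and your proof would be tightened by naming it rather than gesturing at the construction, since the claim $\eta_* e_n^d \simeq e_n^t$ over the map $\bo(n)\to\btop(n)$ is genuinely a theorem of DWW rather than a formality.
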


\begin{proof}
It suffices to construct a path relating the universal Euler sections $\eta_*e_n^d$ and $e_n^t$. This is imprecisely Theorem 4.10 and precisely Theorem 4.13 in~\cite{DWW03}.
\end{proof}

\noindent The fiberwise Poincar\'e duality map
$$
\p:\holim_{V\in\disk_m^{p^{-1}U/}}\Omega^\infty(V^\bullet\wedge \J)\xrightarrow{\simeq}\Omega^\infty((p^{-1}U)^\bullet\wedge \J)
$$
is a homotopy equivalence for any spectrum $\J$ (see, e.g.~\cite{AF19} or~\cite{DWW03}).

\begin{prop}\label{prop:DWW_index_thms}
There is a canonical path between $\p e^d(p)$ and $\tr(p)$ in $$
\displaystyle\holim_{U\in\disk_k^{B/}}\Omega^\infty((p^{-1}(U))^\bullet\wedge \S)\simeq\Gamma_BQ_B(W)
$$
There is also a canonical path between 
$\p e^t(p)$ and $\chi^\%(p)$ in $$
\displaystyle\holim_{U\in\disk_k^{B/}}\Omega^\infty((p^{-1}(U))^\bullet\wedge A(*))\simeq\Gamma_BA^\%_B(W)
$$
\end{prop}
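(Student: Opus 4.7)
The plan is to identify $\p e^d(p)$ and $\tr(p)$ as the disk-local and global presentations of the same construction. Recall that the Becker--Gottlieb transfer $\tr(p) \in \Gamma_B Q_B(W)$ is classically produced as the adjoint of the composite
\[
B_+ \wedge S^d \xrightarrow{c} \Th(\nu) \xrightarrow{\id \wedge e_n^d(\tau)} \Th(\nu) \wedge \Th(\tau) \wedge \S \simeq W_+ \wedge S^d \wedge \S,
\]
where $c$ is the global Thom collapse associated to the fiberwise embedding $W \hookrightarrow B \times \R^d$, and the right-hand identification uses that $\nu \oplus \tau$ is stably trivial of rank $d$. First I would observe that formula (\ref{eqn:char_map_d}) defining $e^d(p)$ is precisely this same composition, performed locally over the disks $V \in \disk_m^{p^{-1}U/}$: the Thom collapse restricts because its formation is natural in open embeddings, and the Euler section $e_n^d$ pulls back under restriction of the vertical tangent bundle. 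Naturality gives a coherent family indexed by $\disk_k^{B/}$ and $\disk_m^{p^{-1}U/}$.

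Next I would invoke fiberwise Poincar\'e duality to pass from disks to the total space. The equivalence
\[
\p : \holim_{V \in \disk_m^{p^{-1}U/}} \Omega^\infty(V^\bullet \wedge \S) \xrightarrow{\simeq} \Omega^\infty((p^{-1}U)^\bullet \wedge \S)
\]
is characterized by the property that it reassembles disk-local sections into a section over $p^{-1}U$ by gluing along open embeddings. When applied to the disk-local construction of $e^d(p)$, this assembly glues the local Thom collapses into the global Thom collapse $c$ and the locally-defined Euler sections into the global Euler section $e^d(\tau)$. The result, viewed over each $U \in \disk_k^{B/}$, is exactly the composite defining $\tr(p)$ restricted to $p^{-1}U$. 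Taking the homotopy limit over $U$ yields the required canonical path in $\Gamma_B Q_B(W)$.

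The argument for $\p e^t(p)$ and $\chi^\%(p)$ is structurally identical, with $\S$ replaced by $A(*)$ and the smooth Euler section $e_n^d(\tau)$ replaced by its topological counterpart $e_n^t(\tau)$. Following~\cite{DWW03}, the excisive A-theory Euler characteristic $\chi^\%(p)$ is defined as the adjoint of the analogous global Thom-collapse-plus-Euler-section composite with $A(*)$ coefficients, so fiberwise Poincar\'e duality identifies the disk-local construction of $e^t(p)$ with $\chi^\%(p)$ up to a canonical homotopy. The main obstacle is conceptual bookkeeping of homotopy coherences rather than new geometric input: one must use that both Becker--Euler classes $e^d_n$ and $e^t_n$ refine to sections of the relevant parametrized spectrum, naturally in the tangent bundle data. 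This refinement is the content of Theorem~4.13 of~\cite{DWW03}, invoked already in the proof of Proposition~\ref{prop:DWW_vanishing}. Granting it, the present proposition is a naturality argument obtained by applying the fiberwise Poincar\'e duality equivalence $\p$ to the disk-by-disk constructions of $e^d(p)$ and $e^t(p)$.
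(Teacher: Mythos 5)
The paper's proof of Proposition~\ref{prop:DWW_index_thms} consists entirely of citing Theorem~5.4 and Theorem~3.18 of~\cite{DWW03}, i.e.\ it treats the identification of $\p e^d(p)$ with $\tr(p)$ and of $\p e^t(p)$ with $\chi^\%(p)$ as external input rather than something to be derived from the constructions in Section~\ref{subsec:PHthms_setup}. Your proposal instead tries to \emph{derive} these identifications by unwinding the definitions, and the crux of your argument is the assertion that the fiberwise Poincar\'e duality equivalence $\p$ ``glues the local Thom collapses into the global Thom collapse $c$ and the locally-defined Euler sections into the global Euler section,'' so that the output is ``exactly the composite defining $\tr(p)$.'' That assertion is not established --- it is a restatement of the proposition. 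It presupposes that $\tr(p)\in\Gamma_B Q_B(W)$ is itself given as the adjoint of a global Thom collapse followed by the Becker--Euler section, but the paper takes $\tr(p)$ and $\chi^\%(p)$ to be the pre-existing Dwyer--Weiss--Williams sections (the paper's Section~\ref{subsec:char_hcobs} characterizes $\tr(p)$ only indirectly, by the condition that its image in $Q(W_+)$ recovers the classical transfer), and comparing those sections with the disk-local Euler construction is precisely the nontrivial content of Theorems~5.4 and~3.18 of~\cite{DWW03}. So the missing piece is exactly what your ``gluing'' sentence sweeps under the rug.

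There is also a misattribution: you invoke Theorem~4.13 of~\cite{DWW03} as the main external input, but that theorem (as used in Proposition~\ref{prop:DWW_vanishing}) supplies the canonical path between the universal sections $\eta_*e^d_n$ and $e^t_n$; it is about compatibility of the two Euler sections, not about identifying either one with the transfer or the excisive characteristic. If you want to prove the proposition rather than cite it, you would need to carry out the comparison at the level of DWW's definitions of $\tr(p)$ and $\chi^\%(p)$ --- in effect reproving Theorems~5.4 and~3.18 of~\cite{DWW03} --- and the ``bookkeeping of homotopy coherences'' you gesture at is where the real work lies.
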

\begin{proof}
The first sentence is Theorem 5.4 in~\cite{DWW03}. The second sentence is Theorem 3.18 in~\cite{DWW03}.
\end{proof}

\begin{prop}
There is a canonical path betwween $\eta\tr(p)$ and $\chi^\%(p)$ in $\Gamma_BA^\%_B(W)$. In particular, the following diagram is homotopy commutative.
$$
\xymatrix{
S^0\ar[rr]^{\tr(p)}\ar[rrd]_{\chi^\%(p)}&&\Gamma_BQ_B(W)\ar[d]^\eta\\
&&\Gamma_BA^\%_B(W)
}
$$
\end{prop}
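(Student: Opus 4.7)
The plan is to deduce this proposition by combining Propositions \ref{prop:DWW_vanishing} and \ref{prop:DWW_index_thms} via naturality of the fiberwise Poincar\'e duality map $\p$. The content of Proposition \ref{prop:DWW_vanishing} is a homotopy commutative triangle at the level of Euler sections: $\eta_* e^d(p)\simeq e^t(p)$. The content of Proposition \ref{prop:DWW_index_thms} is that $\p$ identifies $e^d(p)$ with $\tr(p)$ and $e^t(p)$ with $\chi^\%(p)$. What remains is to transport the triangle across $\p$ and check that the transport is compatible with the unit map $\eta$.

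First I would observe that the construction of $\p$ does not depend on the coefficient spectrum $\J$, only on the geometry of the embeddings $V\hookrightarrow p^{-1}U\hookrightarrow U\times \R^d$. In particular, $\p$ is obtained levelwise from Thom collapses smashed with $\J$, so it is natural in $\J$. Applied to the unit $\eta:\S\to A(*)$, this naturality gives a homotopy commutative square
\begin{equation*}
\xymatrix{
\displaystyle\holim_{V\in\disk_m^{p^{-1}U/}}\Omega^\infty(V^\bullet\wedge \S)\ar[r]^-{\p}\ar[d]_{\eta_*}&\Omega^\infty((p^{-1}U)^\bullet\wedge \S)\ar[d]^{\eta_*}\\
\displaystyle\holim_{V\in\disk_m^{p^{-1}U/}}\Omega^\infty(V^\bullet\wedge A(*))\ar[r]^-{\p}&\Omega^\infty((p^{-1}U)^\bullet\wedge A(*))
}
\end{equation*}
which is natural in $U\in\disk_k^{B/}$ and thus assembles, upon taking $\holim_U$, to a homotopy commutative square relating $\p$ and $\eta$ at the level of $\Gamma_BQ_B(W)$ and $\Gamma_BA^\%_B(W)$.

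Next I would paste this square together with the triangle from Proposition \ref{prop:DWW_vanishing} to obtain the diagram
\begin{equation*}
\xymatrix{
S^0\ar[r]^-{e^d(p)}\ar[dr]_-{e^t(p)}&\displaystyle\holim_U\holim_V\Omega^\infty(V^\bullet\wedge\S)\ar[d]^{\eta_*}\ar[r]^-{\p}&\Gamma_BQ_B(W)\ar[d]^{\eta}\\
&\displaystyle\holim_U\holim_V\Omega^\infty(V^\bullet\wedge A(*))\ar[r]^-{\p}&\Gamma_BA^\%_B(W)
}
\end{equation*}
Composing along the top row and down the right column yields $\eta\circ\p e^d(p)$, while composing along the diagonal and the bottom row yields $\p e^t(p)$. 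The outer diagram is homotopy commutative since it is assembled from homotopy commutative pieces. Finally I would invoke Proposition \ref{prop:DWW_index_thms} to identify $\p e^d(p)$ with $\tr(p)$ and $\p e^t(p)$ with $\chi^\%(p)$, thereby producing the canonical path $\eta\,\tr(p)\simeq \chi^\%(p)$ in $\Gamma_BA^\%_B(W)$.

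The only step requiring any care is the naturality square for $\p$ in the spectrum variable $\J$. The essential point is that $\p$, being built from Thom collapses and adjunctions between $\Omega^\infty$ and smash products, is a natural transformation between functors of $\J$; but spelling this out requires tracking through the explicit Pontryagin--Thom construction used in \cite{DWW03}. Once that is granted, the rest of the argument is a formal diagram chase.
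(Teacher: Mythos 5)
Your proposal is correct and takes essentially the same approach as the paper, which states only that the result ``follows from combining Propositions~\ref{prop:DWW_vanishing} and~\ref{prop:DWW_index_thms}.'' You have correctly identified the one implicit ingredient in that combination, namely the naturality of the fiberwise Poincar\'e duality map $\p$ in the coefficient spectrum, and your diagram chase is exactly the intended argument.
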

\begin{proof}
This follows from combining Propositions~\ref{prop:DWW_vanishing} and~\ref{prop:DWW_index_thms}.
\end{proof}

\subsection{Fiberwise Poincar\'e--Hopf Theorem for the Becker--Gottlieb transfer}
\label{subsec:QthyPH}

In this section we factor the Becker–Gottlieb transfer in terms of the critical locus $\Sigma_f$ of the fiberwise generalized Morse function $f:W\to [0,1]$ and the vertical gradient vector field $X:=\gamma^\vee f$. 

\medskip\noindent Recall the notation $Z_i$ for the connected submanifold of $\Sigma_f$ containing those critical points of degree $i$. The submanifolds $Z_i$ and $Z_{i+1}$ share a boundary $Z_i^1$ consisting of birth-death critical points. The set $\sS(\Sigma_f)$ is defined to be the set of all such $Z_i$. In this section, we will make use of manifolds $\tZ_i$ obtained by perturbing $Z_i$ over the ghost set as in Section \ref{subsec:ghosts}. Note that we will often omit the degree $i$ subscript from $\tZ_i$ when it is not essential. We denote by $\pi_\tZ$ the local diffeomorphism given by the restriction of $p$ to $\tZ$.

\medskip\noindent We begin by introducing two maps associated to $\tZ$. The first is an Euler section associated to $\tZ$, and the second is an index map associated to $\tZ$. After these are defined, we prove the main theorem of this section. 

\subsubsection{The Euler section associated to $\tZ$}

We begin by giving definitions of the categories used to approximate $\tZ$.

\begin{defn}
The category $\disk_k^{\tZ/}$ is the category of one point compactifications of $k$ disks embedded in $\tZ$. More precisely, an object $V\in\disk_k^{\tZ/}$ is the one point compactification of an open disk $\R^k$ embedded in $\tZ$. An open embedding $\R^k\hookrightarrow\R^k\hookrightarrow \tZ$ gives rise to a morphism $V'\to V$ given by the one point compactification of the open embedding $\R^k\hookrightarrow\R^k$. 
\end{defn}

\begin{defn}\label{defn:diskZ_U}
For $U\in\disk_k^{B/}$, the category $\disk_k^{\tZU}$ is the subcategory of $\disk_k^{\tZ/}$ consisting of those objects $V$ for which the associated $\R^k$ embedded in $\tZ$ maps into $U$ under the projection map $p$, which restricts to a local homeomorphism on $\R^k$. 
\end{defn}

Next we define a characteristic $e^d(\pi_{\widetilde{z}})$ associated to $\widetilde{Z}$ in much the same way as the characteristic $e^d(p)$ was defined. Consider the composition  

\begin{equation}\label{eqn:char_map_d_Z}
    U^\bullet\wedge S^d\wedge S^0\xrightarrow{c\wedge \id}V^\bullet\wedge\Th(\eta\oplus\nu|_V)\wedge S^0\xrightarrow{\id\wedge e_n^d(\tau)}V^\bullet\wedge\Th(\eta\oplus\nu|_V)\wedge\Th(\tau|_V)\wedge \S
\end{equation}

\medskip\noindent As in the previous section, the composition~(\ref{eqn:char_map_d_Z}) is used to construct a map

$$
S^0\xrightarrow{e^d(\pi_\tZ)} \displaystyle\holim_{U\in\disk_k^{B/}}\displaystyle\holim_{V\in\disk_k^{\tZU}}\Omega^\infty(V^\bullet\wedge\S)
$$

\noindent Aggregate over all $Z\in\sS(\Sigma_f)$, we have a map

$$
S^0\xrightarrow{\prod_{Z\in\sS(\Sigma_f)}e^d(\pi_\tZ)} \displaystyle\prod_{Z\in\sS(\Sigma_f)}\displaystyle\holim_{U\in\disk_k^{B/}}\displaystyle\holim_{V\in\disk_k^{\tZU}}\Omega^\infty(V^\bullet\wedge\S)
$$

\subsubsection{The index map associated to $\tZ$}

At a point $z\in Z$, we can consider the derivative of the gradient vector field $X$ at $z$, a map $dX_z:\tau_z\to\tau_z$. This induces a self-map on the one point compactification of $\tau_z$, and thus a map $dX:\Th(\tau_Z)\to\Th(\tau_Z)$. For any $V\in\disk_k^{\tZ/}$, we can restrict the map $dX$ to $V$ to obtain a map $dX|_V:\Th(\tau|_V)\to\Th(\tau|_V)$. Then the local map 

\begin{equation}\label{eqn:index_local_d}
V^\bullet\wedge\Th(\eta\oplus\nu|_V)\wedge\Th(\tau|_V)\wedge \S\xrightarrow{\id\wedge\id\wedge dX|_V\wedge\id}
V^\bullet\wedge\Th(\eta\oplus\nu|_V)\wedge\Th(\tau|_V)\wedge \S
\end{equation}

\noindent induces a map

\begin{equation}\label{eqn:d_index_map}
\displaystyle\holim_{U\in\disk_k^{B/}}\displaystyle\holim_{V\in\disk_k^{\tZU}}\Omega^\infty(V^\bullet\wedge\S)\xrightarrow{\ind^d_\tZ}
\displaystyle\holim_{U\in\disk_k^{B/}}\displaystyle\holim_{V\in\disk_k^{\tZU}}\Omega^\infty(V^\bullet\wedge\S)
\end{equation}

\noindent which we denote by $\ind_\tZ^d$ and refer to as the index map on $\tZ$ with coefficients in $\S$.

\medskip

\noindent Aggregate over all $Z\in \sS(\Sigma_f)$ we have a map

\begin{equation}\label{eqn:d_index_map}
\displaystyle\prod_{Z\in\sS(\Sigma_f)}\displaystyle\holim_{U\in\disk_k^{B/}}\displaystyle\holim_{V\in\disk_k^{\tZU}}\Omega^\infty(V^\bullet\wedge\S)\xrightarrow{\prod_{Z\in\sS(\Sigma_f)}\ind^d_\tZ}
\displaystyle\prod_{Z\in\sS(\Sigma_f)}\displaystyle\holim_{U\in\disk_k^{B/}}\displaystyle\holim_{V\in\disk_k^{\tZU}}\Omega^\infty(V^\bullet\wedge\S)
\end{equation}

\noindent The following lemma will be used in Section~\ref{subsec:PHSD}.

\begin{lemma}\label{lem:ind_on_htpy_gps}
For $Z\in\sS(\Sigma_f)$ of degree $j$, $\ind_\tZ^d$ is multiplication by $(-1)^j$ on homotopy groups. 
\end{lemma}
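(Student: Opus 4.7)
The plan is to reduce the statement to a pointwise degree computation on the fibers of $\tau|_\tZ$. The map $\ind_\tZ^d$ is induced by the self-map of equation~(\ref{eqn:index_local_d}), which is obtained by smashing the identity with the self-map $dX|_V\colon \Th(\tau|_V)\to\Th(\tau|_V)$ on the Thom space of the vertical tangent bundle. On homotopy groups, smashing an infinite loop space with a degree $k$ self-map of a sphere bundle that is a degree $k$ map on each fiber acts by multiplication by $k$. Moreover, because $Z$ (and hence $\widetilde{Z}$) is assumed connected and the degree of the fiberwise self-map $dX_z\colon\Th(\tau_z)\to\Th(\tau_z)$ depends continuously on $z$, this degree is constant over $\widetilde{Z}$. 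It therefore suffices to compute the degree of $dX_z$ on a single fiber $\Th(\tau_z)\cong S^n$ at any Morse critical point $z\in Z$.

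At such a point of degree $j$, the Morse normal form gives $f(x)=-x_1^2-\cdots-x_j^2+x_{j+1}^2+\cdots+x_n^2$ in local vertical coordinates, so that the vertical gradient is
$$
X(x)=\nabla^\vee f(x)=(-2x_1,\ldots,-2x_j,2x_{j+1},\ldots,2x_n).
$$
Its derivative at $z$ is therefore the diagonal linear automorphism of $\tau_z\cong\R^n$ with $j$ eigenvalues equal to $-2$ and $n-j$ eigenvalues equal to $+2$. In particular $dX_z$ is an isomorphism with $\mathrm{sign}(\det dX_z)=(-1)^j$, and its one-point compactification $\Th(\tau_z)\cong S^n\to S^n$ is a map of degree $(-1)^j$.

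Combining these two observations, the self-map $dX|_V$ is a fiberwise degree $(-1)^j$ map of $\Th(\tau|_V)$, so smashing with $V^\bullet\wedge\Th(\eta\oplus\nu|_V)\wedge\S$ and passing to $\Omega^\infty$ yields multiplication by $(-1)^j$ on homotopy groups of each local piece; naturality in $V\in\disk_k^{\tZU}$ and $U\in\disk_k^{B/}$ then gives the same formula after taking homotopy limits, proving the lemma. The only real subtlety is that $dX_z$ fails to be invertible at a birth--death point, so the fiberwise degree is a priori not defined there; this is not an obstacle because the birth--death locus has positive codimension in $\widetilde{Z}$, so the locally constant degree function is determined by its value on the open dense Morse part $Z_j\subset\widetilde{Z}$, where the computation above applies.
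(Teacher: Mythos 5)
Your argument takes a genuinely different route from the paper's, which simply cites the correction in~\cite{MP89} to Theorems 2.11 and 3.11 of Brumfiel--Madsen~\cite{BM76}. A direct computation is a reasonable and arguably more illuminating approach, and your local calculation at a Morse critical point is correct: at a degree $j$ point the Hessian is $\diag(-2,\dots,-2,2,\dots,2)$ with $j$ negative entries, the one-point compactification of which is a self-map of $S^n$ of degree $\mathrm{sign}(\det dX_z)=(-1)^j$. The reduction of the lemma to this fiberwise degree is also sound, since $\ind_\tZ^d$ is built locally from $\id\wedge\id\wedge dX|_V\wedge\id$ over contractible $V$.

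The gap is in how you handle the locus where $dX_z$ degenerates. You write that the birth--death locus has positive codimension in $\widetilde Z$, so that the degree is determined by its value on the ``open dense'' Morse part $Z_j$. But recall from Subsection~\ref{subsec:ghosts} that $\widetilde Z_j$ is obtained by attaching the ghost set $Z_j^g$ to $Z_j$ along the birth--death set $Z_j^1$ and smoothing; the ghost set sits over a one-sided collar of the bifurcation set, which has codimension \emph{zero} in $B$, so $Z_j^g$ is a full $k$-dimensional piece of $\widetilde Z_j$ and $Z_j$ is not dense in $\widetilde Z_j$. Worse, the ghost points are precisely the inflection points where $\partial^2 f/\partial x_j^2$ vanishes, so $dX_z$ is singular on all of $Z_j^g$, not just on the codimension one set $Z_j^1$. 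Since a singular linear map on $\tau_z$ does not induce a pointed self-map of $\tau_z^\bullet$, the ``locally constant degree function'' you invoke is not even defined on this large part of $\widetilde Z_j$ without some modification of the formula for the index map. To close the gap you must either exhibit a continuous family of self-maps of $S^n$ over all of $\widetilde Z_j$ that restricts to (a map homotopic to) $dX_z$ on the Morse part --- e.g.\ by replacing $dX$ with the Thom--Pontryagin collapse of the vector field $X$ itself near $\widetilde Z_j$, which is defined and nonvanishing over the ghost set --- and then use connectedness of $\widetilde Z_j$ to propagate the Morse-point computation, or else fall back on the identifications established in~\cite{BM76,MP89} as the paper does.
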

\begin{proof}
This is the correction in \cite{MP89} to Theorems 2.11 and 3.11 in \cite{BM76} (see end of Section 2 in~\cite{MP89}) . 
\end{proof}

\subsubsection{Factoring the Becker–Gottlieb transfer}

\begin{theorem}\label{thm:PHQthy}
The diagram below is homotopy commutative.

\begin{adjustbox}{width=\columnwidth,center}
\xymatrix{
S^0\ar[dddd]_{\prod_{Z\in\sS(\Sigma_f)}e^d(\pi_\tZ)}\ar[rrrr]^{e^d(p)}&&&&\displaystyle\holim_{U\in \disk_k^{B/}}\displaystyle\holim_{V\in\disk_m^{p^{-1}U/}}
\Omega^\infty(V^\bullet\wedge \S)\\
\\
\\
\\
\displaystyle\prod_{Z\in\sS(\Sigma_f)}\displaystyle\holim_{U\in \disk_k^{B/}}\displaystyle\holim_{V\in\disk_k^{\tZU}}
\Omega^\infty(V^\bullet\wedge \S)\ar[rrrr]_{\prod_{Z\in\sS(\Sigma_f)}\ind^d_\tZ}&&&&
\displaystyle\prod_{Z\in\sS(\Sigma_f)}\displaystyle\holim_{U\in \disk_k^{B/}}\displaystyle\holim_{V\in\disk_k^{\tZU}}
\Omega^\infty(V^\bullet\wedge \S)\ar[uuuu]_{+}
}
\end{adjustbox}
\end{theorem}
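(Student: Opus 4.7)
The plan is to reduce the theorem to a local Poincaré--Hopf computation at each point of the critical locus and then assemble the local data via the Thom--Pontryagin collapse. This extends the classical Brumfiel--Madsen and Douglas arguments for bundles admitting an honest fiberwise Morse function; the new ingredient is the treatment of birth-death singularities via the ghost set perturbation of Subsection~\ref{subsec:ghosts}.

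First, for $U \in \disk_k^{B/}$ and $V \in \disk_m^{p^{-1}U/}$ disjoint from $\Sigma_f$, I would observe that the local contribution to $e^d(p)$ admits a canonical nullhomotopy: the vertical gradient $X = \nabla^\vee f$ is a nowhere-zero section of $\tau|_V$, so the Euler section $e_n^d(\tau)|_V$ is canonically null. Hence only disks meeting $\Sigma_f$ contribute, and by naturality in $V$ we may restrict attention to $V \in \disk_m^{\psi^{-1}U/}$, i.e. disks lying in the tubular neighborhood $D\Sigma_f$.

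Second, I would produce the desired factorization over a disk $V$ of the form $V' \times D$, where $V' \subset \tZ$ is a $k$-disk projecting diffeomorphically to $U$ and $D$ is the corresponding fiber of the tubular disk bundle $q$. By the Corollary at the end of Subsection~\ref{subsec:gmf_trans}, $\eta \cong \tau|_{\Sigma_f}$, so $\nu|_V \cong \eta|_{V'} \oplus \nu|_{V'}$. Under this identification the Thom collapse $c \colon U^\bullet \wedge S^d \to V^\bullet \wedge \Th(\nu|_V)$ factors through the collapse $U^\bullet \wedge S^d \to (V')^\bullet \wedge \Th(\eta|_{V'} \oplus \nu|_{V'})$ that underlies $e^d(\pi_\tZ)$. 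Composing with the Euler section $e_n^d(\tau)|_V$ and applying the Thom isomorphism along the fiber factor $D$ recovers the self-map of $\Th(\tau|_{V'})$ induced by the derivative $dX$ of the vertical gradient at the critical locus, which is by definition $\ind^d_\tZ$. Putting these two identifications together realizes the square as a commuting diagram for each Morse stratum $Z_i$, with the fold map $+$ encoding the sum over strata.

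The main obstacle is that this analysis breaks down on the birth-death stratum $Z_i^1$, where $dX$ degenerates and both the nullhomotopy of step one and the identification of step two fail pointwise. This is precisely where the ghost set perturbation is used: by extending $Z_i$ and $Z_{i+1}$ across the bifurcation set to $\tZ_i$ and $\tZ_{i+1}$, both become local diffeomorphisms over $B$ in a neighborhood of the bifurcation locus, and the local arguments above apply on each $\tZ_i$. Over a ghost fiber the index contributions from $\tZ_i$ and $\tZ_{i+1}$ are $(-1)^i$ and $(-1)^{i+1}$ by Lemma~\ref{lem:ind_on_htpy_gps} and therefore cancel, consistent with the fact that the corresponding critical points have been cancelled in the generic fiber. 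Assembling the local homotopies via naturality in $U$ and $V$ and summing over $Z \in \sS(\Sigma_f)$ via the fold map $+$ produces the required homotopy commutative square.
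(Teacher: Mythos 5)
Your overall strategy---localize the Euler section via the vertical gradient, identify the local contribution near $\Sigma_f$ with the index map, and treat birth-death using the ghost perturbation---matches the paper's. But the execution has several genuine gaps that prevent the argument from going through.

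First, the paper's proof hinges on introducing an auxiliary \emph{vector field Euler section} $e^d_X(p)$, defined locally by composing $e^d_n(\tau)$ with the map $\ell \colon \Th(\tau|_V)\to\Th(\tau|_V)$, $y\mapsto y + X(y)$. Scaling $X$ provides a homotopy $e^d_X(p)\simeq e^d(p)$, and the fact that $X$ has length $>1$ outside the unit disk bundle $D\Sigma_f$ makes $e^d_X(p)$ \emph{literally} factor through the subcategory $\disk_m^{\psi^{-1}U/}$. You observe that $e^d_n(\tau)|_V$ ``is canonically null'' on disks missing $\Sigma_f$, which is true pointwise, but you never say how these nullhomotopies are assembled into a factorization compatible with the diagram; the vector field Euler section is precisely the device that does this, and without it Step~1 remains a heuristic.

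Second, the bundle identity you invoke is wrong. You write $\nu|_V \cong \eta|_{V'}\oplus\nu|_{V'}$, but $\nu|_V$ has rank $d-n$ while $\eta|_{V'}\oplus\nu|_{V'}$ has rank $(m-k)+(d-n)=d$. What is true is that $\eta|_{V'}\oplus\nu|_{V'}$ is the normal bundle of the $k$-disk $V'$ in $B\times\R^d$ (and this is the rank-$d$ bundle appearing in the Thom collapse underlying $e^d(\pi_\tZ)$). Relatedly, the direction of the factorization of collapse maps is backwards: the collapse onto the smaller subspace $V'$ factors through the collapse onto $V$, not the other way around. The paper instead compares via the map $R_\tZ(V)^\bullet\wedge\Th(\eta|_{R_\tZ(V)})\to V^\bullet$, which it notes is homotopic to the identity.

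Third, the identification of the local contribution with $\ind^d_\tZ$ requires the lemma that near a nondegenerate zero a vector field is homotopic to its derivative; this is what converts the map $\ell$ (built from $X$) into the derivative map $dX$ (built into the definition of $\ind^d_\tZ$). Your phrase ``recovers the self-map of $\Th(\tau|_{V'})$ induced by the derivative $dX$'' names the target but skips the homotopy that gets you there.

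Finally, the birth-death step is where the gap is most serious. You argue that over a ghost fiber the index contributions are $(-1)^i$ and $(-1)^{i+1}$ and ``therefore cancel.'' That is a $\pi_0$ computation, but the theorem asserts homotopy commutativity of a diagram of spaces. The cancellation on $\pi_0$ is consistent with homotopy commutativity but does not establish it. The paper produces the actual homotopy by taking the time-varying gradient of the unfolding family $f_t$ from Proposition~\ref{prop:gmf_normal_form}, which deforms the degenerate vector field $X$ into one with two nondegenerate zeros of indices $i$ and $i+1$, thereby connecting the single local contribution from $e^d_X(\psi)$ with the wedge of the contributions from $\tZ_i$ and $\tZ_{i+1}$. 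Without some homotopy of this kind, the proof does not close.
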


\begin{proof}
We begin by introducing an auxilliary map. Let $e^d_X(p)$ be the \textit{vector field Euler section}, a map 

$$
S^0\xrightarrow{e^d_X(p)} \displaystyle\holim_{U\in\disk_k^{B/}}\displaystyle\holim_{V\in\disk_m^{p^{-1}U/}}\Omega^\infty(V^\bullet\wedge\S)
$$

\noindent given locally as

\begin{equation}\label{eqn:vfieldchar_local_s}
 U^\bullet\wedge S^d\wedge S^0\xrightarrow{c\wedge \id}V^\bullet\wedge\Th(\nu|_V)\wedge S^0\xrightarrow{\id\wedge e_X^d(\tau)}V^\bullet\wedge\Th(\nu|_V)\wedge\Th(\tau|_V)\wedge \S
\end{equation}

The only difference between the composition above and~(\ref{eqn:char_map_d}) is the map $e^d_X(\tau)$, which we define to be the composition
$$
S^0\xrightarrow{e^d_n(\tau)}\Th(\tau|_V)\wedge\S\xrightarrow{\ell\wedge\id}\Th(\tau|_V)\wedge\S
$$
in which the map $\ell:\Th(\tau|_V)\to \Th(\tau|_V)$ sends a point over $y\in V$ to $y+X(y)$. By placing the scaling factor $t\in[0,1]$ as a coefficient in front of $X$, we obtain a family of maps $\ell_t$ which is a homotopy between the map $\ell$ and the identity. Thus, the vector field Euler section $e_X^d(p)$ is homotopic to the Euler section $e^d(p)$ from before. This results in homotopy commutativity of the following diagram:
\begin{equation}\label{eqn:vfieldchar}
\xymatrix{
S^0\ar[rrrr]^{e^d(p)}\ar[rrrrd]_{e^d_X(p)}&&&&\displaystyle\holim_{U\in \disk_k^{B/}}\displaystyle\holim_{V\in\disk_m^{p^{-1}U/}}
\Omega^\infty(V^\bullet\wedge \S)\\
&&&&\displaystyle\holim_{U\in \disk_k^{B/}}\displaystyle\holim_{V\in\disk_m^{p^{-1}U/}}
\Omega^\infty(V^\bullet\wedge \S)\ar[u]^{\id}
}
\end{equation}

\medskip

The remainder of this proof is organized in smaller pieces, each of which proves the homotopy commutativity of a triangle in the diagram below.

\begin{adjustbox}{width=\columnwidth,center}
\xymatrix{
S^0
\ar[dddddddddddddd]_{\prod_{Z\in\sS(\Sigma_f)}e^d(\pi_\tZ)}
\ar[rrrr]^{e^d(p)}
\ar@/_4pc/[ddddddddddddddrrrr]^{\prod_{Z\in\sS(\Sigma_f)}e^d_X(\pi_\tZ)}
\ar@/_2pc/[ddddddddrrrr]^{e^d_X(\psi)}
\ar@/_/[ddrrrr]_{e_X^d(p)}
&&&&\displaystyle\holim_{U\in \disk_k^{B/}}\displaystyle\holim_{V\in\disk_m^{p^{-1}U/}}
\Omega^\infty(V^\bullet\wedge \S)
\\
&&&(1)
\\
&&&&\displaystyle\holim_{U\in \disk_k^{B/}}\displaystyle\holim_{V\in\disk_m^{p^{-1}U/}}
\Omega^\infty(V^\bullet\wedge \S)
\ar[uu]_{\id} 
\\
&&&(2)
\\
\\
\\
\\
&&&(3)
\\
&&&&\displaystyle\holim_{U\in \disk_k^{B/}}\displaystyle\holim_{V\in\disk_m^{\psi^{-1}U/}}
\Omega^\infty(V^\bullet\wedge \S)
\ar[uuuuuu]_{\text{incl}}
\\
\\
\\
\\
&(4)
\\
\\
\displaystyle\prod_{Z\in\sS(\Sigma_f)}\displaystyle\holim_{U\in \disk_k^{B/}}\displaystyle\holim_{V\in\disk_k^{\tZU}}
\Omega^\infty(V^\bullet\wedge \S)\ar[rrrr]_{\prod_{Z\in\sS(\Sigma_f)}\ind^d_\tZ}&&&&
\displaystyle\prod_{Z\in\sS(\Sigma_f)}\displaystyle\holim_{U\in \disk_k^{B/}}\displaystyle\holim_{V\in\disk_k^{\tZU}}
\Omega^\infty(V^\bullet\wedge \S)
\ar[uuuuuu]_g
}
\end{adjustbox}

\medskip

\medskip

We will now prove the homotopy commutativity of each subtriangle in the diagram above, beginning from the top triangle and proceeding clockwise. We will define the morphisms in each triangle as needed.

\begin{enumerate}
    \medskip\item The top triangle relating maps $e^d(p)$ and $e^d_X(p)$ is identically diagram (\ref{eqn:vfieldchar}).
    
    \medskip\item The map $e_X^d(\psi)$ is defined locally on a pair $U\in \disk_k^{B/}$ and $V\in\disk_m^{\psi^{-1}U/}$ as the composition given in (\ref{eqn:vfieldchar_local_s}). Since the characteristics $e^d_X(p)$ and $e^d_X(\psi)$ have identical local definitions, it suffices to see that the composition (\ref{eqn:vfieldchar_local_s}) is nullhomotopic on objects of $\disk_m^{p^{-1}U/}$ that are not also objects in $\disk_m^{\psi^{-1}U/}$. Since $DZ$ is the unit disk bundle on $Z$, the vector field $X$ has length greater than 1 at any point $x\not\in DZ$. This means that the map $j:\Th(\tau|_V)\to\Th(\tau|_V)$ appearing in the definition of the vector field Euler section $e^d(\tau)$ is nullhomotopic on any $V\in\disk_m^{p^{-1}U/}$ that is not also an object in $\disk_m^{\psi^{-1}U/}$. Thus the map $e_X^d(p)$ factors through $e_X^d(\psi)$ as indicated in the diagram above.
    
    \medskip\item The map $e_X^d(\pi_\tZ)$ is defined as the composition $\ind_X^d\circ e^d(\pi_\tZ)$ so that the bottom triangle commutes.
    
    \medskip\noindent Consider the functor $R_\tZ:\disk_m^{\psi^{-1}U/}\to\disk_k^{\tZU}$ that sends a disk $V$ corresponding to an embedding $\R^k\times \R^n\hookrightarrow D\Sigma_f$ to the intersection of the image of this embedding and $\tZ$. The map $g$ is induced by the product of functors $R_\tZ$ over $Z\in \sS(\Sigma_f)$.

    \medskip\noindent To see that this triangle commutes, it suffices to see that the local definitions of the characteristics $e_X^d(\pi_\tZ)$ and $e_X^d(\psi)$ agree up to homotopy. There are two cases, either $U$ intersects a bifurcation set in $B$, or it does not. Assuming that it does not, we have $U\in\disk_k^{B/}$, $V\in\disk_m^{\psi^{-1}U/}$, and $R_\tZ(V)\in \disk_k^{\tZU}$ for some $Z\in\sS(\Sigma_f)$, and we must check that the local definitions of the characteristics $e_X^d(\pi_\tZ)$ and $e_X^d(\psi)$ agree up to homotopy. Thus we must construct a homotopy that makes the diagram below commute. This diagram is obtained by comparing (\ref{eqn:vfieldchar_local_s}) applied to $V$ (the left vertical composition) with (\ref{eqn:char_map_d_Z}) composed with (\ref{eqn:index_local_d}) applied to $R_\tZ(V)$ (the composition along the top).

    \begin{adjustbox}{width=\columnwidth,center}
        \xymatrix{
        U^\bullet\wedge S^d\wedge S^0
        \ar[r]^{c\wedge \id}
        \ar[d]^{c\wedge \id}
        &
        R_\tZ(V)^\bullet\wedge\Th(\eta\oplus\nu|_{R_\tZ(V)})\wedge S^0
        \ar[dr]^{\id\wedge e_n^d(\tau)}
        \\
        V^\bullet\wedge\Th(\nu|_V)\wedge S^0
        \ar[d]^{\id\wedge e_X^d(\tau)}
        &
        &
        R_\tZ(V)^\bullet\wedge\Th(\eta\oplus\nu|_{R_\tZ(V)})\wedge\Th(\tau|_{R_\tZ(V)})\wedge \S
        \ar[d]^{\id\wedge\id\wedge dX|_{R_\tZ(V)}\wedge\id}
        \\
        V^\bullet\wedge\Th(\nu|_V)\wedge\Th(\tau|_V)\wedge \S
        &&
        R_\tZ(V)^\bullet\wedge\Th(\eta|_{R_\tZ(V)})\wedge\Th(\nu|_{R_\tZ(V)})\wedge\Th(\tau|_{R_\tZ(V)})\wedge \S\ar[ll]^{\text{incl}}
        }
    \end{adjustbox}
    
    \medskip\noindent The map $R_\tZ(V)^\bullet\wedge\Th(\eta|_{R_\tZ(V)})\to V^\bullet$ is homotopic to the identity. Recall that the map $e^d_X(\tau)$ is defined to be the composition
    $$
    S^0\xrightarrow{e^d_n(\tau)}\Th(\tau|_V)\wedge\S\xrightarrow{j\wedge\id}\Th(\tau|_V)\wedge\S
    $$
    Thus it remains to show that the map $dX|_{R_\tZ(V)}$ is homotopic to $j$. However, this is an immediate consequence of the fact that in the neighborhood of a nondegenerate zero a vector field is homotopic to its derivative.

    \medskip\noindent Next we assume that $U$ does intersect a bifurcation set. 
    Then for some $V\in \disk_m^{\psi^{-1}U/}$ we have nontrivial $R_{\tZ_i}(V)\in \disk_k^{\tZ_i/U}$ and $R_{\tZ_{i+1}}(V)\in \disk_k^{\tZ_{i+1}/U}$, and we must see that the local definition of the characteristic $e_X^d(\psi)$ agrees with the wedge sum of the local definitions of the characteristics $e_X^d(\pi_{\tZ_i})$ and $e_X^d(\pi_{\tZ_{i+1}})$. 
    It suffices to verify that the map $dX|_{R_{\tZ_i}(V)}\vee dX|_{R_{\tZ_{i+1}}(V)}$ composed with the inclusion 
    
    $$
    \Th(\tau|_{R_{\tZ_i}(V)})\vee \Th(\tau|_{R_{\tZ_{i+1}(V)}})\to\Th(\tau|_V)
    $$
    
    \noindent is homotopic to $j:\Th(\tau|_V)\to \Th(\tau|_V)$. This homotopy is the homotopy of the vector field $X$ with only one degenerate zero to a vector field with two nondegenerate zeros of degree $i$ and $i+1$, which is obtained by taking the time varying gradient vector field of the family of functions in Proposition~\ref{prop:gmf_normal_form}.

    \medskip\item Recall from the previous step that the map $e_X^d(\pi_\tZ)$ is defined as the composition $\ind_X^d\circ e^d(\pi_\tZ)$ so that the bottom triangle commutes.
    
\end{enumerate}

\end{proof}

\subsection{Fiberwise Poincar\'e--Hopf Theorem for the excisive A-theory Euler characteristic}
\label{subsec:AthyPH}

In this section we prove a fiberwise Poincar\'e–Hopf theorem for the excisive A-theory Euler characteristic. The proof of this result is largely the same as the proof of Theorem~\ref{thm:PHQthy} in the previous section, with the main differences being that the Euler section $e^d_n$ is replaced with $e^t_n$, and the coefficient spectrum $\S$ is replaced with $A(*)$. All indexing categories used in this section are defined in Section~\ref{subsec:QthyPH}. However, we give new definitions of the maps and an abbreviated proof to keep the discussion in this section mostly self contained. 

\subsubsection{Preliminaries}

\medskip\noindent We begin by defining an Euler section and index map associated to $Z$. To construct the characteristic $e^t(\pi_{\widetilde{Z}})$ we consider the following composition analogous to (\ref{eqn:char_map_d_Z}) from the previous section. 

\begin{equation}\label{eqn:char_map_t_Z}
    U^\bullet\wedge S^d\wedge S^0\xrightarrow{c\wedge \id}V^\bullet\wedge\Th(\eta\oplus\nu|_V)\wedge S^0\xrightarrow{\id\wedge e_n^d(\tau)}V^\bullet\wedge\Th(\eta\oplus\nu|_V)\wedge\Th(\tau|_V)\wedge \S
\end{equation}

This induces a map

$$
S^0\xrightarrow{e^t(\pi_{\widetilde{Z}})} \displaystyle\holim_{U\in\disk_k^{B/}}\displaystyle\holim_{V\in\disk_k^{\tZU}}\Omega^\infty(V^\bullet\wedge A(*))
$$

\noindent Aggregate over all $Z\in\sS(\Sigma_f)$, we have a map

$$
S^0\xrightarrow{\prod_{Z\in\sS(\Sigma_f)}e^t(\pi_\tZ)} \displaystyle\prod_{Z\in\sS(\Sigma_f)}\displaystyle\holim_{U\in\disk_k^{B/}}\displaystyle\holim_{V\in\disk_k^{\tZU}}\Omega^\infty(V^\bullet\wedge A(*))
$$

\medskip To define the index map, we once again consider the derivative of the gradient vector field $X$ at $z\in Z$. This is a map $dX_z:\tau_z\to\tau_z$ which induces a self-map on the one point compactification of $\tau_z$, and thus a map $dX:\Th(\tau_Z)\to\Th(\tau_Z)$. For any $V\in\disk_k^{Z/}$, we can restrict the map $dX$ to $V$ to obtain a map $dX|_V:\Th(\tau|_V)\to\Th(\tau|_V)$. Then the local map 

\begin{equation}\label{eqn:index_local}
V^\bullet\wedge\Th(\nu|_V)\wedge\Th(\tau|_V)\wedge A(*)\xrightarrow{\id\wedge\id\wedge dX|_V\wedge\id}
V^\bullet\wedge\Th(\nu|_V)\wedge\Th(\tau|_V)\wedge A(*)
\end{equation}

\medskip\noindent induces a map

\begin{equation}\label{eqn:t_index_map}
\displaystyle\holim_{U\in\disk_k^{B/}}\displaystyle\holim_{V\in\disk_k^{\tZU}}\Omega^\infty(V^\bullet\wedge A(*))\xrightarrow{\ind^t_Z}
\displaystyle\holim_{U\in\disk_k^{B/}}\displaystyle\holim_{V\in\disk_k^{\tZU}}\Omega^\infty(V^\bullet\wedge A(*))
\end{equation}

\noindent which we denote as $\ind_Z^t$ and refer to as the index map on $Z$ with coefficients in $A(*)$. 

\noindent Aggregate over all $Z\in \sS(\Sigma_f)$ we have a map

\begin{equation}\label{eqn:t_index_map}
\displaystyle\prod_{Z\in\sS(\Sigma_f)}\displaystyle\holim_{U\in\disk_k^{B/}}\displaystyle\holim_{V\in\disk_k^{\tZU}}\Omega^\infty(V^\bullet\wedge A(*))\xrightarrow{\prod_{Z\in\sS(\Sigma_f)}\ind^t_\tZ}
\displaystyle\prod_{Z\in\sS(\Sigma_f)}\displaystyle\holim_{U\in\disk_k^{B/}}\displaystyle\holim_{V\in\disk_k^{\tZU}}\Omega^\infty(V^\bullet\wedge A(*))
\end{equation}

\noindent The following lemma will be used in Section~\ref{subsec:PHSD}.

\begin{lemma}\label{lem:ind_t_on_htpy_gps}
For $Z\in\sS(\Sigma_f)$ of degree $j$, $\ind_\tZ^t$ is multiplication by $(-1)^j$ on homotopy groups. 
\end{lemma}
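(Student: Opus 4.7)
The plan is to mimic the proof of Lemma~\ref{lem:ind_on_htpy_gps}, since only the coefficient spectrum changes from $\S$ to $A(*)$. First I would observe that the local definition of $\ind_\tZ^t$ in~(\ref{eqn:index_local}) is obtained from the corresponding local definition of $\ind_\tZ^d$ by replacing $\S$ with $A(*)$. In both cases the only non-identity factor is the self-map $dX|_V$ acting on the vertical tangent Thom space $\Th(\tau|_V)$, while the coefficient spectrum appears purely as an external smash factor. In particular, the construction of $\ind_\tZ$ is natural in the coefficient spectrum.

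Next I would carry out a pointwise degree computation. At any $z\in Z$ of degree $j$, the derivative $dX_z:\tau_z\to\tau_z$ is a nondegenerate linear automorphism with exactly $j$ negative eigenvalues, so the self-map it induces on the one-point compactification $\Th(\tau_z)\cong S^n$ has degree $(-1)^j$, and this degree is locally constant on $\tZ$. It is standard that smashing any spectrum with a degree-$(-1)^j$ self-map of $S^n$ induces multiplication by $(-1)^j$ on all homotopy groups; applied with coefficients in $A(*)$, this shows that~(\ref{eqn:index_local}) induces multiplication by $(-1)^j$ on $\pi_*\Omega^\infty(V^\bullet\wedge\Th(\nu|_V)\wedge\Th(\tau|_V)\wedge A(*))$ for every $V\in\disk_k^{\tZU}$.

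To conclude I would verify that this behavior persists after passing to the homotopy limits over $\disk_k^{B/}$ and $\disk_k^{\tZU}$. This is immediate because multiplication by a fixed integer on homotopy groups is natural in the diagram variable and therefore compatible with the Bousfield--Kan spectral sequence computing $\pi_*$ of a homotopy limit. One might hope to avoid the direct degree computation by instead invoking naturality in the coefficient spectrum, i.e.\ the commutative square obtained from the unit $\eta:\S\to A(*)$ relating $\ind_\tZ^d$ to $\ind_\tZ^t$, and then quoting Lemma~\ref{lem:ind_on_htpy_gps}; however, $\eta_*$ is not surjective on $\pi_*$ in general, so that reduction alone is insufficient. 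I do not anticipate a substantial obstacle: the argument is entirely formal once the degree of $dX_z$ is identified, which is the same content that appears in the sphere-coefficient case handled in~\cite{MP89,BM76}.
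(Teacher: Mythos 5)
Your proof is correct and takes essentially the same approach as the paper: observe that the index map $\ind_\tZ^t$ acts only on $\Th(\tau|_V)$ and that the coefficient spectrum $A(*)$ is an inert smash factor, so whatever establishes the $\S$-coefficient statement goes through verbatim. The only difference is that you spell out the degree computation (that $dX_z$ has $j$ negative eigenvalues and hence degree $(-1)^j$ on $\Th(\tau_z)$), whereas the paper defers this to the references cited in the proof of Lemma~\ref{lem:ind_on_htpy_gps}; your remark that a naive transfer of Lemma~\ref{lem:ind_on_htpy_gps} along $\eta_*:\S\to A(*)$ would not suffice, since $\eta_*$ is not surjective on homotopy, is a correct and worthwhile clarification of what the paper's phrase ``up to a change of coefficients on which the index map is the identity'' actually means.
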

\begin{proof}
This follows immediately from Lemma~\ref{lem:ind_on_htpy_gps}, since the definition of $\ind_\tZ^t$ is identical to that of $\ind_\tZ^d$, up to a change of coefficients on which the index map is the identity. 
\end{proof}

\subsubsection{Factoring the excisive A-theory Euler characteristic}

\begin{theorem}\label{thm:PHAthy}
The diagram below is homotopy commutative.

\begin{adjustbox}{width=\columnwidth,center}
\xymatrix{
S^0\ar[dddd]_{\displaystyle\prod_{Z\in\sS(\Sigma_f)}e^t(\pi_\tZ)}\ar[rrrr]^{e^t(p)}
&&&&
\displaystyle\holim_{U\in \disk_k^{B/}}\displaystyle\holim_{V\in\disk_m^{p^{-1}U/}}
\Omega^\infty(V^\bullet\wedge A(*))\\
\\
\\
\\
\displaystyle\prod_{Z\in\sS(\Sigma_f)}\displaystyle\holim_{U\in \disk_k^{B/}}\displaystyle\holim_{V\in\disk_k^{\tZU}}
\Omega^\infty(V^\bullet\wedge A(*))\ar[rrrr]_{\displaystyle\prod_{Z\in\sS(\Sigma_f)}\ind^t_\tZ}
&&&&
\displaystyle\prod_{Z\in\sS(\Sigma_f)}\displaystyle\holim_{U\in \disk_k^{B/}}\displaystyle\holim_{V\in\disk_k^{\tZU}}
\Omega^\infty(V^\bullet\wedge A(*))\ar[uuuu]_{+}
}
\end{adjustbox}
\end{theorem}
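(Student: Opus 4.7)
The proof should proceed in essentially complete parallel with Theorem~\ref{thm:PHQthy}, since the argument there was largely coefficient-free: the spectrum $\S$ entered only as a smash factor that was carried passively through the Thom collapse and Euler section constructions. My plan is to replicate each step with $\S$ replaced by $A(*)$ and $e^d_n$ replaced by $e^t_n$, and then to verify that the two key geometric inputs (the scaling homotopy of the vector field and the deformation of a birth-death singularity into a cancelling pair of Morse points) continue to function at the level of the new local compositions.

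First I would introduce a \emph{vector field Euler section} $e^t_X(p)$ with local definition
\begin{equation*}
U^\bullet\wedge S^d\wedge S^0\xrightarrow{c\wedge\id}V^\bullet\wedge\Th(\nu|_V)\wedge S^0\xrightarrow{\id\wedge e^t_X(\tau)}V^\bullet\wedge\Th(\nu|_V)\wedge\Th(\tau|_V)\wedge A(*),
\end{equation*}
where $e^t_X(\tau)$ is the composition of $e^t_n(\tau)$ with the self-map $\ell:\Th(\tau|_V)\to\Th(\tau|_V)$ sending $y\mapsto y+X(y)$. Linearly scaling $X$ by $t\in[0,1]$ produces a homotopy from $\ell$ to the identity, showing $e^t_X(p)\simeq e^t(p)$. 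This is the analog of Step~(1) in Theorem~\ref{thm:PHQthy}.

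Next, since the vector field $X$ has length greater than $1$ outside the unit disk bundle $D\Sigma_f$, the map $\ell$ is nullhomotopic on any $V\in\disk_m^{p^{-1}U/}$ that does not lie in $\disk_m^{\psi^{-1}U/}$. Thus $e^t_X(p)$ factors through an intermediate characteristic $e^t_X(\psi)$ defined by the same local formula but indexed only by disks in the tubular neighborhood of $\Sigma_f$ (Step~(2)). Then, using the functor $R_{\tZ}:\disk_m^{\psi^{-1}U/}\to\disk_k^{\tZU}$ from the previous proof, I would verify that locally on a disk disjoint from any bifurcation set, the composition $(\ref{eqn:char_map_t_Z})\circ(\ref{eqn:index_local})$ at $R_{\tZ}(V)$ agrees up to homotopy with $(\ref{eqn:vfieldchar_local_s})$-with-$A(*)$-coefficients at $V$. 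The homotopy rests on exactly two facts used in Theorem~\ref{thm:PHQthy}: first that $R_{\tZ}(V)^\bullet\wedge\Th(\eta|_{R_{\tZ}(V)})\to V^\bullet$ is homotopic to the identity, and second that near a nondegenerate zero of $X$ the vector field is homotopic to its linearization $dX$ (Step~(3)).

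For a disk $U$ meeting a bifurcation set, I would invoke the normal form from Proposition~\ref{prop:gmf_normal_form}: the family of gradient vector fields there gives an explicit homotopy from a single degenerate zero to a pair of cancelling Morse zeros of degrees $i$ and $i+1$. This homotopy identifies $\ell$ on $\Th(\tau|_V)$ with the wedge $dX|_{R_{\tZ_i}(V)}\vee dX|_{R_{\tZ_{i+1}}(V)}$ composed with the inclusion, exactly as before. Finally, defining $e^t_X(\pi_{\tZ}):=\ind^t_{\tZ}\circ e^t(\pi_{\tZ})$ makes the bottom triangle commute by construction, completing the factorization. I expect no genuine obstacle beyond bookkeeping: the only thing to check is that the scaling homotopy of $\ell$ and the birth-death normal form homotopy only involve maps of the Thom space $\Th(\tau|_V)$, which smash trivially with the coefficient spectrum $A(*)$; since the argument of Theorem~\ref{thm:PHQthy} works identically for any coefficient spectrum, the substitution $\S\rightsquigarrow A(*)$ is formal.
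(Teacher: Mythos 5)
Your proposal matches the paper's proof essentially verbatim: the paper also defines the vector field Euler section $e^t_X(p)$, uses the scaling homotopy $\ell_t$ to identify it with $e^t(p)$, factors through $e^t_X(\psi)$ and then through the strata via $R_{\tZ}$, and concludes by observing that the two geometric ingredients (vector field linearization near a nondegenerate zero and the birth--death unfolding) are identical to Theorem~\ref{thm:PHQthy} after the formal substitution $\S\rightsquigarrow A(*)$, $e^d_n\rightsquigarrow e^t_n$. Your remark that these homotopies take place entirely on $\Th(\tau|_V)$ and therefore smash trivially with the coefficient spectrum is exactly the justification the paper invokes for treating the argument as coefficient-independent.
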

\begin{proof}
In this proof we rehash the main steps of the proof of Theorem~\ref{thm:PHQthy}, making the necessary changes as needed. 

\medskip\noindent We begin by defining the vector field Euler section $e_X^t(p)$, a map

$$
S^0\xrightarrow{e^t_X(p)} \displaystyle\holim_{U\in\disk_k^{B/}}\displaystyle\holim_{V\in\disk_m^{p^{-1}U/}}\Omega^\infty(V^\bullet\wedge A(*))
$$

\noindent given locally as

\begin{equation}\label{eqn:vfieldchar_local}
 U^\bullet\wedge S^d\wedge S^0\xrightarrow{c\wedge \id}V^\bullet\wedge\Th(\nu|_V)\wedge S^0\xrightarrow{\id\wedge e_X^t(\tau)}V^\bullet\wedge\Th(\nu|_V)\wedge\Th(\tau|_V)\wedge A(*)
\end{equation}

\medskip\noindent The map $e^t_X(\tau)$ is defined to be the composition
$$
S^0\xrightarrow{e^d_n(\tau)}\Th(\tau|_V)\wedge A(*)\xrightarrow{\ell\wedge\id}\Th(\tau|_V)\wedge A(*)
$$
in which the map $\ell:\Th(\tau|_V)\to \Th(\tau|_V)$ sends a point over $y\in V$ to $y+X(y)$. By placing the scaling factor $t\in[0,1]$ as a coefficient in front of $X$, we obtain a family of maps $\ell_t$ which is a homotopy between the map $\ell$ and the identity. Thus, the vector field Euler section $e_X^t(p)$ is homotopic to the Euler section $e^t(p)$. This results in homotopy commutativity of the following diagram:

\begin{equation}\label{eqn:vfieldchar_t}
\xymatrix{
S^0\ar[rrrr]^{e^t(p)}\ar[rrrrd]_{e^t_X(p)}&&&&\displaystyle\holim_{U\in \disk_k^{B/}}\displaystyle\holim_{V\in\disk_m^{p^{-1}U/}}
\Omega^\infty(V^\bullet\wedge A(*))\\
&&&&\displaystyle\holim_{U\in \disk_k^{B/}}\displaystyle\holim_{V\in\disk_m^{p^{-1}U/}}
\Omega^\infty(V^\bullet\wedge A(*))\ar[u]^{\id}
}
\end{equation}

\noindent The remainder of this proof is organized in smaller pieces, each of which proves the homotopy commutativity of a triangle in the diagram below. 

\medskip

\begin{adjustbox}{width=\columnwidth,center}

\xymatrix{
S^0
\ar[dddddddddddddd]_{\prod_{Z\in\sS(\Sigma_f)}e^t(\pi)}
\ar[rrrr]^{e^t(p)}
\ar@/_4pc/[ddddddddddddddrrrr]^{\prod_{Z\in\sS(\Sigma_f)}e^t_X(\pi)}
\ar@/_2pc/[ddddddddrrrr]^{e^t_X(\psi)}
\ar@/_/[ddrrrr]_{e_X^t(p)}
&&&&\displaystyle\holim_{U\in \disk_k^{B/}}\displaystyle\holim_{V\in\disk_m^{p^{-1}U/}}
\Omega^\infty(V^\bullet\wedge A(*))
\\
&&&(1)
\\
&&&&\displaystyle\holim_{U\in \disk_k^{B/}}\displaystyle\holim_{V\in\disk_m^{p^{-1}U/}}
\Omega^\infty(V^\bullet\wedge A(*))
\ar[uu]_{\id} 
\\
\\
&&&(2)
\\
\\
\\
&&&(3)
\\
&&&&\displaystyle\holim_{U\in \disk_k^{B/}}\displaystyle\holim_{V\in\disk_m^{\psi^{-1}U/}}
\Omega^\infty(V^\bullet\wedge A(*))
\ar[uuuuuu]_{\text{incl}}
\\
\\
\\
\\
&(4)
\\
\\
\displaystyle\prod_{Z\in\sS(\Sigma_f)}\displaystyle\holim_{U\in \disk_k^{B/}}\displaystyle\holim_{V\in\disk_k^{\tZU}}
\Omega^\infty(V^\bullet\wedge A(*))\ar[rrrr]_{\prod_{Z\in\sS(\Sigma_f)}\ind^t_Z}&&&&
\displaystyle\prod_{Z\in\sS(\Sigma_f)}\displaystyle\holim_{U\in \disk_k^{B/}}\displaystyle\holim_{V\in\disk_k^{\tZU}}
\Omega^\infty(V^\bullet\wedge A(*))
\ar[uuuuuu]_g^\simeq
}
\end{adjustbox}

\medskip

\medskip

The map $e_X^t(\pi)$ in the diagram above is defined as the composition $\ind_Z^t\circ e^t(\pi)$, hence the bottom triangle commutes. The triangle at the top of the diagram above is diagram (\ref{eqn:vfieldchar_t}). The proofs of the commutativity of the middle two triangles are identical to the analogous steps in the proof of Theorem~\ref{thm:PHQthy}, so long as the Euler sections $e^d_n(\tau)$ are replaced with $e^t_n(\tau)$, and the coefficient spectrum $\S$ is replaced with $A(*)$. 
\end{proof}

We conclude this section by giving a generalization of Theorems~\ref{thm:PHQthy} and~\ref{thm:PHAthy}. 

\begin{theorem}\label{thm:PHcube}
The following diagram of spaces is homotopy commutative.

\medskip 

\medskip 

\begin{adjustbox}{width=\columnwidth,center}
\begin{tikzcd}[column sep={12em,between origins},row sep=4em]
&
S^0
  \arrow[rr,"e^d(p)"]
  \arrow[dl,swap,"\prod_{Z\in\sS(\Sigma_f)}e^d(\pi)"]
  \arrow[dd,"\id"]
&&
\displaystyle\holim_{U\in \disk_k^{B/}}\displaystyle\holim_{V\in\disk_m^{p^{-1}U/}}
\Omega^\infty(V^\bullet\wedge \S)  
\arrow[dd,"\eta"]
\\
\displaystyle\prod_{Z\in\sS(\Sigma_f)}\displaystyle\holim_{U\in \disk_k^{B/}}\displaystyle\holim_{V\in\disk_k^{\tZU}}
\Omega^\infty(V^\bullet\wedge \S)
  \arrow[rr,crossing over,swap,"\prod_{Z\in\sS(\Sigma_f)}\ind^d_\tZ"]
  \arrow[dd,"\eta"]
&&
\displaystyle\prod_{Z\in\sS(\Sigma_f)}\displaystyle\holim_{U\in \disk_k^{B/}}\displaystyle\holim_{V\in\disk_k^{\tZU}}
\Omega^\infty(V^\bullet\wedge \S)
  \arrow[ur,"+"]
  \arrow[dd,crossing over,"\eta"]
\\
&
S^0
  \arrow[rr,"e^t(p)"]
  \arrow[dl,swap,"\prod_{Z\in\sS(\Sigma_f)}e^t(\pi)"]
&&
\displaystyle\holim_{U\in \disk_k^{B/}}\displaystyle\holim_{V\in\disk_m^{p^{-1}U/}}
\Omega^\infty(V^\bullet\wedge A(*))
\\
\displaystyle\prod_{Z\in\sS(\Sigma_f)}\displaystyle\holim_{U\in \disk_k^{B/}}\displaystyle\holim_{V\in\disk_k^{\tZU}}
\Omega^\infty(V^\bullet\wedge A(*))
\arrow[rr,swap,"\prod_{Z\in\sS(\Sigma_f)}\ind^t_Z"]
&&
\displaystyle\prod_{Z\in\sS(\Sigma_f)}\displaystyle\holim_{U\in \disk_k^{B/}}\displaystyle\holim_{V\in\disk_k^{\tZU}}
\Omega^\infty(V^\bullet\wedge A(*))
\arrow[ur,"+"]
\end{tikzcd}
\end{adjustbox}

\end{theorem}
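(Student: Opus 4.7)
The plan is to verify that the cube assembles from three independent pieces: its top face, its bottom face, and the four rectangular side faces connecting them. The top face is exactly the diagram of Theorem~\ref{thm:PHQthy} and the bottom face is exactly the diagram of Theorem~\ref{thm:PHAthy}, so these commute up to homotopy by what we have already proven. It then remains to supply homotopies for each of the four side faces and to argue that the total datum can be coherently assembled into a homotopy-commutative cube.

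For the side faces I would proceed as follows. The side face containing the two copies of $S^0$ and the two Euler-section maps $\prod_{Z} e^d(\pi_\tZ)$ and $\prod_{Z} e^t(\pi_\tZ)$ commutes by naturality of the Euler-section construction in the coefficient spectrum, combined with the canonical path $\eta_\ast e^d_n \simeq e^t_n$ between the universal Euler sections of Dwyer--Weiss--Williams (the same ingredient that yields Proposition~\ref{prop:DWW_vanishing}). The side face containing $e^d(p)$ and $e^t(p)$ commutes for precisely the same reason, applied to the smooth bundle $p$. The two remaining side faces (the ones containing the index maps and the summation $+$) commute by naturality in the coefficient spectrum: the local formula~(\ref{eqn:index_local_d}) defining $\ind_\tZ^d$ and the local formula~(\ref{eqn:index_local}) defining $\ind_\tZ^t$ are obtained from one another by smashing the identity on $V^\bullet\wedge\Th(\nu|_V)\wedge\Th(\tau|_V)\wedge(-)$ with $\eta:\S\to A(\ast)$, so the square
\[
\xymatrix{
\Omega^\infty(V^\bullet\wedge\S)\ar[r]^{dX}\ar[d]_{\eta_\ast}& \Omega^\infty(V^\bullet\wedge\S)\ar[d]^{\eta_\ast}\\
\Omega^\infty(V^\bullet\wedge A(\ast))\ar[r]^{dX}& \Omega^\infty(V^\bullet\wedge A(\ast))
}
\]
commutes on the nose at every pair $(U,V)$ and passes to the homotopy limits. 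The summation $+$ is defined at the level of homotopy groups of $\Omega$-spectra and is manifestly natural in the coefficient spectrum, so the last face commutes strictly as well.

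Having the two horizontal faces and the four side faces, the remaining task is purely formal: assemble the six homotopies into a coherent homotopy-commutative cube. Concretely, one unwinds the proofs of Theorems~\ref{thm:PHQthy} and~\ref{thm:PHAthy} and observes that every intermediate map used there --- the vector-field Euler sections $e^d_X(p), e^t_X(p)$, the factorizations $e^d_X(\psi), e^t_X(\psi)$ through the tubular neighborhood, and the comparison map induced by the functor $R_\tZ$ --- is defined by a construction that is natural in the coefficient spectrum. Applying $\eta_\ast$ to the chain of intermediate triangles used in the proof of Theorem~\ref{thm:PHQthy} produces precisely the chain of intermediate triangles used in the proof of Theorem~\ref{thm:PHAthy}, and the homotopies constructed in the former theorem map under $\eta_\ast$ to the homotopies constructed in the latter.

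The potential obstacle is coherence: each face only gives a homotopy, and to obtain a single homotopy-commutative cube one must check that these homotopies agree where the faces meet. In our situation this is not a serious obstruction because all of the homotopies arise either from standard naturality (which is strictly compatible with $\eta_\ast$) or from the single DWW path between $e^d_n$ and $e^t_n$, which appears uniformly in both horizontal faces. Hence the proof reduces to packaging the verification cleanly rather than to producing new geometric input.
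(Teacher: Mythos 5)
Your proof takes essentially the same route as the paper: the paper observes that the top face is Theorem~\ref{thm:PHQthy}, the bottom face is Theorem~\ref{thm:PHAthy}, the back face is Proposition~\ref{prop:DWW_vanishing}, and the remaining vertical faces commute by applying the universal path between $e^d_n$ and $e^t_n$ (Theorem 4.13 in \cite{DWW03}) to the local definitions on each disk. Your proposal reaches the same conclusion but is more granular about which vertical face is handled by which mechanism, distinguishing the Euler-section faces (which genuinely need the DWW comparison) from the index-map and summation faces (which commute by strict naturality in the coefficient spectrum, since $\ind^t_\tZ$ is obtained from $\ind^d_\tZ$ by smashing the identity on $\Th(\tau|_V)$ with $\eta$). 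That refinement is correct and, if anything, is a useful elaboration of the paper's terser argument. One small caveat: the claim that applying $\eta_\ast$ to the chain of triangles in Theorem~\ref{thm:PHQthy} produces \emph{precisely} the chain in Theorem~\ref{thm:PHAthy} is slightly overstated, since the $A(*)$-Euler section $e^t_n(\tau)$ is only homotopic to, not equal to, $\eta_\ast e^d_n(\tau)$; the correction is exactly the DWW path you invoke, so the argument stands, but the word ``precisely'' should be softened. Your closing remark on coherence is also not required here: the statement asserts homotopy commutativity face by face, and that is all the downstream rational $\pi_0$ computations in Section~\ref{sec:ThetaCalc} use, so the paper legitimately omits any discussion of coherent assembly of the six homotopies.
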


\medskip

\begin{proof}
Theorem~\ref{thm:PHQthy} indicates that the top square commutes, and Theorem~\ref{thm:PHAthy} indicates that the bottom square commutes. The back face of the cube commutes by Proposition~\ref{prop:DWW_vanishing}. 

\medskip

Homotopy commutativity of each of the remaining vertical faces follows from applying Theorem 4.13 in~\cite{DWW03} to the local definitions of each map to construct the homotopy locally on each disk. 
\end{proof}

\subsection{Fiberwise Poincar\'e–Hopf Theorem for stratified deformations}
\label{subsec:PHSD}

In this section, our primary goal is to generalize the definitions of Sections \ref{subsec:QthyPH} and \ref{subsec:AthyPH} to arbitrary stratified subsets, so that we may generalize Theorems \ref{thm:PHQthy}, \ref{thm:PHAthy}, and \ref{thm:PHcube} to the particular stratified deformation constructed in Section \ref{subsec:strat_def}. 

We begin by fixing some notation for this section. Let $(\Sigma,\psi)$ denote an arbitrary stratified subset of $p:W\to B$ with coefficients in $X=BO\times BO$. The map $\psi:\Sigma\to BO\times BO$ composed with projection onto the first factor is denoted $\gamma_-^\Sigma$, and when composed with projection onto the second factor is denoted $\gamma_+^\Sigma$. These choices will ultimately be used to classify the negative and positive eigenspace bundles on $\Sigma$, as in Example \ref{ex:GMF_SD}. Let $\sS(\Sigma)$ denote the collection of strata in the degree-wise stratification on $\Sigma$. We denote an element of $\sS(\Sigma)$ in degree $i$ by $Z_i$, but may drop the subscript when it is unnecessary. Stratified subsets admit ghosts just the same as the critical locus of a fiberwise generalized Morse function. As before, we use the notation $\tZ$ to denote the smooth manifold obtained by perturbing a stratum of $\sS(\Sigma)$ over the ghost set as in Section \ref{subsec:ghosts}. 

\medskip\noindent For $Z\in\sS(\Sigma)$, the category $\disk_m^\tZU$ is defined identically as in Definition \ref{defn:diskZ_U}. As in Sections \ref{subsec:QthyPH} and \ref{subsec:AthyPH}, the composition~(\ref{eqn:char_map_d_Z}) is used to construct a map

$$
S^0\xrightarrow{e^d(\pi_\tZ)} \displaystyle\holim_{U\in\disk_k^{B/}}\displaystyle\holim_{V\in\disk_k^{\tZU}}\Omega^\infty(V^\bullet\wedge\S)
$$

\noindent and the composition~(\ref{eqn:char_map_t_Z}) is used to construct a map 

$$
S^0\xrightarrow{e^t(\pi_\tZ)} \displaystyle\holim_{U\in\disk_k^{B/}}\displaystyle\holim_{V\in\disk_k^{\tZU}}\Omega^\infty(V^\bullet\wedge A(*))
$$

\noindent Aggregate over all $Z\in\sS(\Sigma)$, we also have the maps

\begin{equation*}
S^0\xrightarrow{\prod_{Z\in\sS(\Sigma)}e^d(\pi_\tZ)} \displaystyle\prod_{Z\in\sS(\Sigma_f)}\displaystyle\holim_{U\in\disk_k^{B/}}\displaystyle\holim_{V\in\disk_k^{\tZU}}\Omega^\infty(V^\bullet\wedge \S)
\end{equation*}

\begin{equation*}
S^0\xrightarrow{\prod_{Z\in\sS(\Sigma)}e^t(\pi_\tZ)} \displaystyle\prod_{Z\in\sS(\Sigma_f)}\displaystyle\holim_{U\in\disk_k^{B/}}\displaystyle\holim_{V\in\disk_k^{\tZU}}\Omega^\infty(V^\bullet\wedge A(*))
\end{equation*}

\noindent In particular, the definitions of the local characteristics on $\Sigma$ do not depend on $\Sigma$ being obtained as the critical locus of a fiberwise generalized Morse function.

\medskip

Next, we must generalize the definitions of the index maps from before. Recall that when working with the critical locus of a fiberwise generalized Morse function, the index map was defined using a family of matrices which provided an automorphism of the one point compactification of the vertical tangent space at each point in the critical locus. That particular matrix was the derivative of the vertical gradient vector field of the function, or the Hessian of the function. 

For an arbitrary stratified deformation, our matrix is given by the block sum of negative the identity matrix on the negative eigenspace bundle, and the identity matrix on the positive eigenspace bundle. In the event that $\Sigma$ is the critical locus of a fiberwise generalized Morse function, this choice clearly agrees with the maps (\ref{eqn:d_index_map}) and (\ref{eqn:t_index_map}). Thus, we use the same notation for index maps on arbitrary stratified subsets: $\ind_\tZ^d$ and $\ind_\tZ^h$. 

Let $(\Sigma_f,\psi_f)$ be the stratified subset obtained from the critical locus of a fiberwise generalized Morse function. Let $(S,\Psi)$ be a stratified deformation between stratified subsets $(\Sigma_f,\psi_f)$ and $(\Sigma_\sd,\psi_\sd)$. For the stratified subset $(\Sigma_\sd,\psi_\sd)$, consider the composition below

\begin{equation}
\xymatrix{
S^0\ar[dd]_{\prod_{Z\in\sS(\Sigma_\sd)}e^d(\pi_{\tZ})}&&\displaystyle\holim_{U\in \disk_k^{B/}}\displaystyle\holim_{V\in\disk_m^{p^{-1}U/}}
\Omega^\infty(V^\bullet\wedge \S)\\
\\
\displaystyle\prod_{Z\in\sS(\Sigma_\sd)}\displaystyle\holim_{U\in\disk_k^{B/}}\displaystyle\holim_{V\in\disk_k^{\tZ_l/U}}\Omega^\infty(V^\bullet\wedge \S)
\ar[rr]^{\prod_{Z\in\sS(\Sigma_\sd)}\ind_\tZ^d}&& \displaystyle\prod_{Z\in\sS(\Sigma_\sd)}\displaystyle\holim_{U\in\disk_k^{B/}}\displaystyle\holim_{V\in\disk_k^{\tZ/U}}\Omega^\infty(V^\bullet\wedge \S)
\ar[uu]_{+}
}
\end{equation}

Because $(\Sigma_f,\psi_f)$ and $(\Sigma_\sd,\psi_\sd)$ form the boundaries of a stratified subset $(S,\Psi)$ in $p\times I:W\times I\to B\times I$, the compositions below are homotopic. 

\begin{equation}\label{eqn:SD_htpy}
\left(
\left(\prod_{Z\in\sS(\Sigma_f)}e^d(\pi_{\tZ})\right)
\circ\left(
\prod_{Z\in\sS(\Sigma_f)}\ind_\tZ^d
\right)
\circ +
\right)
\sim
\left(
\left(\prod_{Z\in\sS(\Sigma_\sd)}e^d(\pi_{\tZ_1})\right)
\circ\left(
\prod_{Z\in\sS(\Sigma_\sd)}\ind_\tZ^d
\right)
\circ +
\right)
\end{equation}

\noindent Thus, we have the following theorem.

\newpage
\begin{theorem}\label{thm:PHHSD}
If there exists $(S,\Psi)$ a stratified deformation between $(\Sigma_f,\psi_f)$ and $(\Sigma_\sd,\psi_\sd)$, then the following diagram is homotopy commutative. 

\begin{adjustbox}{width=\columnwidth,center}
\xymatrix{
S^0\ar[dddd]_{\prod_{Z\in\sS(\Sigma_\sd)}e^d(\pi_\tZ)}\ar[rrrr]^{e^d(p)}&&&&\displaystyle\holim_{U\in \disk_k^{B/}}\displaystyle\holim_{V\in\disk_m^{p^{-1}U/}}
\Omega^\infty(V^\bullet\wedge \S)\\
\\
\\
\\
\displaystyle\prod_{Z\in\sS(\Sigma_\sd)}\displaystyle\holim_{U\in \disk_k^{B/}}\displaystyle\holim_{V\in\disk_k^{\tZU}}
\Omega^\infty(V^\bullet\wedge \S)\ar[rrrr]_{\prod_{Z\in\sS(\Sigma_\sd)}\ind^d_\tZ}&&&&
\displaystyle\prod_{Z\in\Sigma_\sd}\displaystyle\holim_{U\in \disk_k^{B/}}\displaystyle\holim_{V\in\disk_k^{\tZU}}
\Omega^\infty(V^\bullet\wedge \S)\ar[uuuu]_{{+}}
}
\end{adjustbox}
\end{theorem}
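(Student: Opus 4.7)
The plan is to deduce Theorem~\ref{thm:PHHSD} from Theorem~\ref{thm:PHQthy} by using the stratified deformation $(S,\Psi)$ to homotope the composition built from the critical locus $(\Sigma_f,\psi_f)$ to the composition built from $(\Sigma_\sd,\psi_\sd)$. Concretely, Theorem~\ref{thm:PHQthy} already produces a homotopy from $e^d(p)$ to
$$
\Big(\prod_{Z\in\sS(\Sigma_f)} e^d(\pi_\tZ)\Big) \circ \Big(\prod_{Z\in\sS(\Sigma_f)} \ind^d_\tZ\Big) \circ (+),
$$
and equation~(\ref{eqn:SD_htpy}) promises a homotopy between this composition and the analogous composition built from $(\Sigma_\sd,\psi_\sd)$. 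Concatenating these two homotopies yields the homotopy commutativity of the desired diagram.

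The heart of the proof is therefore the construction of~(\ref{eqn:SD_htpy}). My approach is to perform the Euler section and index map constructions of this section not over $B$ but over the cylinder $B\times I$, applied to the bundle $p\times\id_I\colon W\times I\to B\times I$ and to the stratified subset $(S,\Psi)$. The constructions of $e^d(\pi_\tZ)$ and $\ind^d_\tZ$ depend only on a smooth manifold $\tZ$ obtained by ghost-perturbing a stratum, the embedding of that stratum into the ambient total space, and the classifying data $\psi\colon\Sigma\to BO\times BO$ of the positive and negative eigenspace bundles; none of these data require that $\Sigma$ be realized as a critical locus. Carrying this out stratum-by-stratum over $(S,\Psi)$ produces a map from $S^0$ into the appropriate section space over $B\times I$ whose restrictions to $B\times\{0\}$ and $B\times\{1\}$ recover the two compositions in~(\ref{eqn:SD_htpy}). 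Because the inclusions $B\times\{i\}\hookrightarrow B\times I$ are homotopy equivalences for $i=0,1$, these restrictions are canonically homotopic, yielding~(\ref{eqn:SD_htpy}).

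The main obstacle I expect is technical compatibility. I must check that the ghost perturbations chosen on $(\Sigma_f,\psi_f)$ and $(\Sigma_\sd,\psi_\sd)$ extend to a ghost perturbation of $(S,\Psi)$ so that the parametrized construction restricts correctly at the two boundaries. This reduces to applying the transversality analysis of Subsection~\ref{subsec:gmf_trans} one parameter higher: the birth-death singularities of the projection $\rho\colon S\to B\times I$ remain codimension one and admit the same local unfolding normal form used throughout, so the ghost construction of Subsection~\ref{subsec:ghosts} applies verbatim to each degree-wise stratum of $\sS(S)$, and its restrictions to the endpoints $t=0,1$ can be arranged to agree with the previously chosen data by making consistent choices of tubular neighborhoods. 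Once this setup is in place, the remainder of the argument is formal naturality of the Thom collapse and index-map constructions in the base.
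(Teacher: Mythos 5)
Your proposal is correct and follows essentially the same route as the paper: apply Theorem~\ref{thm:PHQthy} to $(\Sigma_f,\psi_f)$, then concatenate with the homotopy~(\ref{eqn:SD_htpy}), which the paper justifies exactly as you do --- by noting that $(\Sigma_f,\psi_f)$ and $(\Sigma_\sd,\psi_\sd)$ cobound the stratified subset $(S,\Psi)$ over $B\times I$, so that the Euler-section/index-map composite performed over $B\times I$ restricts at the two endpoints to the two sides of~(\ref{eqn:SD_htpy}). The paper is terser (it states~(\ref{eqn:SD_htpy}) in one sentence and notes separately in a remark that ghosts make sense on arbitrary stratified subsets), whereas you spell out the parametrized construction over $B\times I$ and the compatibility of ghost perturbations, but the underlying argument is identical.
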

\begin{proof}
First we apply Theorem~\ref{thm:PHQthy} to $(\Sigma_f,\psi_f)$. Then the homotopy~(\ref{eqn:SD_htpy}) is used to make the diagram above commute. 
\end{proof}

\begin{rmk}
The homotopy (\ref{eqn:SD_htpy}) used in the proof of Theorem~\ref{thm:PHHSD} indicates that the composition in question is a \textit{stratified deformation invariant}. Compare to~\cite{Igu05} Lemma 5.4.
\end{rmk}

\medskip\noindent We now have the following generalization of Theorem~\ref{thm:PHcube}:

\begin{theorem}\label{thm:PHcubeSD}
The following diagram of spaces is homotopy commutative.

\medskip 

\medskip 

\begin{adjustbox}{width=\columnwidth,center}
\begin{tikzcd}[column sep={12em,between origins},row sep=4em]
&
S^0
  \arrow[rr,"e^d(p)"]
  \arrow[dl,swap,"\prod_{Z\in\sS(\Sigma_\sd)}e^d(\pi)"]
  \arrow[dd,"\id"]
&&
\displaystyle\holim_{U\in \disk_k^{B/}}\displaystyle\holim_{V\in\disk_m^{p^{-1}U/}}
\Omega^\infty(V^\bullet\wedge \S)  
\arrow[dd,"\eta"]
\\
\displaystyle\prod_{Z\in\sS(\Sigma_\sd)}\displaystyle\holim_{U\in \disk_k^{B/}}\displaystyle\holim_{V\in\disk_k^{\tZU}}
\Omega^\infty(V^\bullet\wedge \S)
  \arrow[rr,crossing over,swap,"\prod_{Z\in\sS(\Sigma_\sd)}\ind^d_\tZ"]
  \arrow[dd,"\eta"]
&&
\displaystyle\prod_{Z\in\sS(\Sigma_\sd)}\displaystyle\holim_{U\in \disk_k^{B/}}\displaystyle\holim_{V\in\disk_k^{\tZU}}
\Omega^\infty(V^\bullet\wedge \S)
  \arrow[ur,"+"]
  \arrow[dd,crossing over,"\eta"]
\\
&
S^0
  \arrow[rr,"e^t(p)"]
  \arrow[dl,"\prod_{Z\in\sS(\Sigma_\sd)}e^t(\pi)"]
&&
\displaystyle\holim_{U\in \disk_k^{B/}}\displaystyle\holim_{V\in\disk_m^{p^{-1}U/}}
\Omega^\infty(V^\bullet\wedge A(*))
\\
\displaystyle\prod_{Z\in\sS(\Sigma_\sd)}\displaystyle\holim_{U\in \disk_k^{B/}}\displaystyle\holim_{V\in\disk_k^{\tZU}}
\Omega^\infty(V^\bullet\wedge A(*))
\arrow[rr,"\prod_{Z\in\sS(\Sigma_\sd)}\ind^t_Z"]
&&
\displaystyle\prod_{Z\in\sS(\Sigma_\sd)}\displaystyle\holim_{U\in \disk_k^{B/}}\displaystyle\holim_{V\in\disk_k^{\tZU}}
\Omega^\infty(V^\bullet\wedge A(*))
\arrow[ur,"+"]
\end{tikzcd}
\end{adjustbox}

\end{theorem}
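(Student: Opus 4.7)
The plan is to verify the cube face-by-face, mirroring the strategy used to prove Theorem~\ref{thm:PHcube} but substituting $(\Sigma_\sd,\psi_\sd)$ for $(\Sigma_f,\psi_f)$ throughout, and invoking Theorem~\ref{thm:PHHSD} in place of Theorem~\ref{thm:PHQthy} where appropriate.

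First I would dispatch the \emph{top square}: this is exactly the statement of Theorem~\ref{thm:PHHSD}, so no further work is required there. For the \emph{bottom square}, I would first establish the direct analogue of Theorem~\ref{thm:PHHSD} with coefficients in $A(*)$ in place of $\S$. This is entirely parallel: Theorem~\ref{thm:PHAthy} provides the factorization for $(\Sigma_f,\psi_f)$, and the stratified deformation $(S,\Psi)$ between $(\Sigma_f,\psi_f)$ and $(\Sigma_\sd,\psi_\sd)$ produces a homotopy of the form~(\ref{eqn:SD_htpy}) with $\S$ replaced by $A(*)$ (the $A(*)$-coefficient version of the local maps defining $e^t(\pi_\tZ)$ and $\ind_\tZ^t$ fits into the same stratified-deformation cobordism argument, since the deformation invariance of the composite is formal and independent of the coefficient spectrum). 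Combined with Theorem~\ref{thm:PHAthy} applied to $(\Sigma_f,\psi_f)$, this gives the commutativity of the bottom face.

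Next, the \emph{back face}, which relates $e^d(p)$ and $e^t(p)$ via $\eta$, is exactly Proposition~\ref{prop:DWW_vanishing}. The \emph{front face}, which relates $\prod_{Z} e^d(\pi_\tZ)$ and $\prod_{Z} e^t(\pi_\tZ)$ via $\eta$ on the index side, is established analogously: both maps are constructed out of the local compositions~(\ref{eqn:char_map_d_Z}) and~(\ref{eqn:char_map_t_Z}), which differ only by the coefficient-level application of $e_n^d(\tau)$ versus $e_n^t(\tau)$. By Theorem 4.13 in~\cite{DWW03} there is a canonical path between the universal Euler sections $\eta_*e_n^d$ and $e_n^t$, and smashing this path with the Thom collapse and the index automorphism gives the desired face-level homotopy on each disk, which assembles under the homotopy limits to a global homotopy.

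Finally, the two \emph{side faces}, consisting entirely of the naturality squares for the index maps and the addition map under $\eta$, commute essentially formally: the index maps $\ind_\tZ^d$ and $\ind_\tZ^t$ are induced by the identical geometric data (the block-sum involution on negative eigenspaces), so $\eta$ intertwines them at the level of local coefficients; the addition map $+$ is natural in the coefficient spectrum, so $\eta$ intertwines it as well. The main obstacle — in the sense of the place requiring the most care rather than the most novelty — is keeping the stratified-deformation homotopy of~(\ref{eqn:SD_htpy}) compatible with the $\eta$-coherence on the front and back faces, i.e.\ arranging all of these homotopies so that the cube is coherently filled rather than merely face-wise homotopy commutative. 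This is handled by carrying out every step at the level of the local compositions on $U\in\disk_k^{B/}$ and $V\in\disk_k^{\tZU}$, where all the homotopies are canonical and strictly natural in the disk indexing, before passing to the holimits.
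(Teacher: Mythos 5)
Your proposal is correct and matches the strategy implicit in the paper: the paper states Theorem~\ref{thm:PHcubeSD} without proof, leaving the reader to adapt the proof of Theorem~\ref{thm:PHcube} by replacing the top-face input Theorem~\ref{thm:PHQthy} with Theorem~\ref{thm:PHHSD}, supplying the analogous $A(*)$-coefficient stratified-deformation statement for the bottom face, and reusing Proposition~\ref{prop:DWW_vanishing} and the local Theorem 4.13 of~\cite{DWW03} argument for the remaining vertical faces. Your additional care about coherence of the face-filling homotopies is sound, though the paper's convention (working throughout at the level of local compositions on disks before passing to holimits, as in the proof of Theorem~\ref{thm:PHcube}) already disposes of it exactly as you describe.
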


\subsubsection{Rational fiberwise Poincar\'e–Hopf formulas}

In Subsection~\ref{subsec:PHTheta} and Section~\ref{sec:ThetaCalc}, we will make use of the following simplifications of Theorem~\ref{thm:PHcubeSD}. Let $\Sigma_\sd^\#$ be the complement of the component $\Lambda$ of $\Sigma_\sd$ on which $\psi_\sd$ is trivial (see Lemmas \ref{lem:SD2} and \ref{lem:null_deformable}).  Let $\mathscr{A}_j\subset\sS(\Sigma^\#_\sd)$ contain those elements of degree $j$ corresponding to the lower stratum of the remaining immersed lenses and let $\mathscr{A}_{j+1}\subset\sS(\Sigma^\#_\sd)$ contain those elements of degree $j+1$ corresponding to the upper stratum of the immersed lenses. We also denote by $Z_i\in\sS(\Sigma_\sd)$ a stratum in degree $i$.

\begin{corollary}\label{cor:ratl_formula_d}
The following equality holds in $\pi_0\Gamma_B Q_B(W)\otimes\Q$
$$
e^d_\partial(p) = \displaystyle\sum_{Z_i\in\sS(\Sigma^\#_\sd)} (-1)^i e^d(\pi_{\tZ_i}) 
$$
\end{corollary}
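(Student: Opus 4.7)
The plan is to pass Theorem~\ref{thm:PHHSD} to $\pi_0\otimes\Q$, evaluate each index map as a scalar via Lemma~\ref{lem:ind_on_htpy_gps}, and then truncate the resulting sum using Lemma~\ref{lem:null_deformable}.

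I would begin by fixing the stratified deformation from $(\Sigma_f,\psi_f)$ to $(\Sigma_\sd,\psi_\sd)$ constructed in Section~\ref{subsec:strat_def} (Lemmas~\ref{lem:SD1} and~\ref{lem:SD2}). Taking $\pi_0$ of the homotopy commutative square in Theorem~\ref{thm:PHHSD} and noting that the fold map $+$ becomes addition on $\pi_0$ of an infinite loop space, the two factorizations of $e^d(p)$ must agree on $\pi_0$. Invoking Lemma~\ref{lem:ind_on_htpy_gps} to evaluate each $\ind^d_\tZ$ as multiplication by $(-1)^{\deg Z}$ then yields
$$
e^d_\partial(p) \;=\; \sum_{Z_i \in \sS(\Sigma_\sd)} (-1)^i\, e^d(\pi_{\tZ_i})
$$
in $\pi_0\Gamma_B Q_B(W)\otimes\Q$.

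Next, I would decompose $\sS(\Sigma_\sd) = \sS(\Lambda) \sqcup \sS(\Sigma_\sd^\#)$ according to Lemma~\ref{lem:SD2}. The remark following Theorem~\ref{thm:PHHSD} asserts that the signed sum $\sum(-1)^{\deg Z} e^d(\pi_\tZ)$ is a stratified deformation invariant. By Lemma~\ref{lem:null_deformable}, some positive integer multiple $N\cdot(\Lambda,*)$ is stratified null-deformable, so the corresponding $\Lambda$-contribution $N\cdot\sum_{Z\in\sS(\Lambda)}(-1)^{\deg Z}e^d(\pi_\tZ)$ vanishes in $\pi_0\Gamma_B Q_B(W)$. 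After tensoring with $\Q$ the integer $N$ becomes invertible, forcing the $\Lambda$-contribution itself to be zero. What remains on the right is precisely $\sum_{Z_i \in \sS(\Sigma_\sd^\#)} (-1)^i e^d(\pi_{\tZ_i})$, as required.

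The main technical point I expect to have to handle carefully is the identification of the LHS of Theorem~\ref{thm:PHHSD} with $e^d_\partial(p)$ rather than $e^d(p)$. By analogy with $\tr_\partial(p) = \tr(p) - r^*\tr(p_0)$ from Subsection~\ref{subsec:char_hcobs}, this requires verifying that $r^*e^d(p_0)$ is absorbed compatibly with the decomposition $\sS(\Sigma_\sd) = \sS(\Lambda)\sqcup\sS(\Sigma_\sd^\#)$; in the h-cobordism setup where the corollary is applied, $f:(W,\partial_0 W)\to(I,0)$ has $\partial_0 W$ as a regular level set and hence carries no critical points on $\partial_0 W$, so this boundary bookkeeping is harmless and the formula goes through as stated. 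Everything else is a routine passage to $\pi_0\otimes\Q$ and an invocation of null-deformability.
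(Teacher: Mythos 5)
Your proposal is correct and follows the same three-step route as the paper's own proof: pass Theorem~\ref{thm:PHHSD} to $\pi_0$, evaluate each $\ind^d_{\tZ}$ as the scalar $(-1)^{\deg Z}$ via Lemma~\ref{lem:ind_on_htpy_gps}, then drop the $\Lambda$-summand rationally via Lemma~\ref{lem:null_deformable}. The one place where you are more careful than the paper is the discrepancy between $e^d(p)$ --- which is literally what appears as the top arrow in the diagram of Theorem~\ref{thm:PHHSD} --- and $e^d_\partial(p)$, which is what the corollary asserts. The paper's proof begins by simply declaring that the theorem ``reduces to'' the formula with $e^d_\partial(p)$ on the left and never justifies the change of notation. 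Your observation that $f:(W,\partial_0 W)\to (I,0)$ carries no critical points on $\partial_0 W$, so that the relative correction term $r^* e^d(p_0)$ is accounted for tautologically and the two Euler sections agree in this setting, is exactly the point the paper leaves implicit; it is worth spelling out. Likewise your explicit remark that Lemma~\ref{lem:null_deformable} only gives vanishing of an integer multiple $N\cdot(\Lambda,*)$, and that invertibility of $N$ after $\otimes\,\Q$ is what kills the $\Lambda$-contribution, makes precise what the paper merely states as ``we can eliminate the summands for those $Z$ in $\sS(\Lambda)$.''
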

\begin{proof}
In $\pi_0\Gamma_B Q_B(W)$, Theorem~\ref{thm:PHHSD} reduces to the formula 
$$
e^d_\partial(p) = \displaystyle\sum_{Z_i\in\sS(\Sigma_\sd)} \ind^d_\tZ e^d(\pi_{\tZ_i}) 
$$
By Lemma~\ref{lem:ind_on_htpy_gps}, we can replace $\ind^d_\tZ$ with $(-1)^i$ as in the formula below
$$
e^d_\partial(p) = \displaystyle\sum_{Z_i\in\sS(\Sigma_\sd)} (-1)^i e^d(\pi_{\tZ_i}) 
$$
By Lemma~\ref{lem:null_deformable}, we can eliminate the summands for those $Z$ in $\sS(\Lambda)$, and we are left with the following formula in $\pi_0\Gamma_B Q_B(W)\otimes\Q$. 
$$
e^d_\partial(p) = \displaystyle\sum_{Z_i\in\sS(\Sigma^\#_\sd)} (-1)^i e^d(\pi_{\tZ_i}) 
$$
\end{proof}

\begin{corollary}\label{cor:ratl_formula_t}
The following equality holds in $\pi_0\Gamma_B A^\%_B(W)\otimes\Q$
$$
e^t_\partial(p) = \displaystyle\sum_{Z_i\in\sS(\Sigma^\#_\sd)} (-1)^i e^t(\pi_{\tZ_i}) 
$$
\end{corollary}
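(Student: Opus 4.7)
The proof will mirror the argument given for Corollary~\ref{cor:ratl_formula_d}, substituting the bottom face of Theorem~\ref{thm:PHcubeSD} (the excisive A-theory analogue of Theorem~\ref{thm:PHHSD}) for Theorem~\ref{thm:PHHSD} and using Lemma~\ref{lem:ind_t_on_htpy_gps} in place of Lemma~\ref{lem:ind_on_htpy_gps}. Since the bottom face of Theorem~\ref{thm:PHcubeSD} is a homotopy commutative diagram with exactly the same shape as the Q-theory diagram, all the reasoning transports immediately.

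The plan is as follows. First, I apply the A-theory face of Theorem~\ref{thm:PHcubeSD} and pass to $\pi_0$. Reading the diagram in $\pi_0\Gamma_B A^\%_B(W)$ and subtracting the contribution of $r^*\chi^\%(p_0)$ (so that we work with the relative Euler section $e^t_\partial(p)$), we obtain
$$
e^t_\partial(p) = \sum_{Z_i\in\sS(\Sigma_\sd)} \ind^t_{\tZ_i}\, e^t(\pi_{\tZ_i}).
$$
Second, I apply Lemma~\ref{lem:ind_t_on_htpy_gps}: for $Z_i$ of degree $i$ the induced map $\ind^t_{\tZ_i}$ acts on homotopy groups as multiplication by $(-1)^i$. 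This yields
$$
e^t_\partial(p) = \sum_{Z_i\in\sS(\Sigma_\sd)} (-1)^i\, e^t(\pi_{\tZ_i}).
$$
Third, I invoke Lemma~\ref{lem:null_deformable}: an integer multiple of the component $(\Lambda,*)$ on which $\psi_\sd$ is trivial is stratified null-deformable. After tensoring with $\Q$, the contributions of the strata in $\sS(\Lambda)$ vanish, leaving only the strata in $\sS(\Sigma_\sd^\#)$, which is precisely the desired formula.

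I do not anticipate a substantive obstacle beyond bookkeeping. The key conceptual inputs -- stratified deformation invariance of the relative excisive A-theory Euler characteristic (encoded in the bottom face of Theorem~\ref{thm:PHcubeSD}), the computation of the index map on homotopy (Lemma~\ref{lem:ind_t_on_htpy_gps}), and the rational null-deformability of the trivially tangentially structured component (Lemma~\ref{lem:null_deformable}) -- are already available in the preceding subsections. The only mild care needed is to confirm that the homotopy (\ref{eqn:SD_htpy}) used in the proof of Theorem~\ref{thm:PHHSD} admits an A-theory analogue; but this is automatic because the stratified deformation $(S,\Psi)$ is a geometric object in $W\times I\to B\times I$ and produces a homotopy in the A-theory diagram exactly as it does in the Q-theory diagram, by naturality of the local characteristic maps~(\ref{eqn:char_map_t_Z}) in $(S,\Psi)$.
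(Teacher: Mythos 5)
Your proof is correct and mirrors the paper's own argument: apply the A-theory fiberwise Poincar\'e--Hopf theorem for stratified deformations on $\pi_0$, substitute signs via Lemma~\ref{lem:ind_t_on_htpy_gps}, and discard the $\Lambda$-contribution rationally via Lemma~\ref{lem:null_deformable}. Your explicit appeal to the bottom face of Theorem~\ref{thm:PHcubeSD} is in fact a mild improvement on the paper, whose proof cites Theorem~\ref{thm:PHHSD} (the $\S$-coefficient version) and writes $\ind^d_{\tZ}$ where it means $\ind^t_{\tZ}$ --- apparent slips that your version quietly corrects.
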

\begin{proof}
In $\pi_0\Gamma_B A^\%_B(W)$, Theorem~\ref{thm:PHHSD} reduces to the formula 
$$
e^t_\partial(p) = \displaystyle\sum_{Z_i\in\sS(\Sigma_\sd)} \ind^d_\tZ e^t(\pi_{\tZ_i}) 
$$
By Lemma~\ref{lem:ind_t_on_htpy_gps}, we can replace $\ind^d_\tZ$ with $(-1)^i$ as in the formula below
$$
e^t_\partial(p) = \displaystyle\sum_{Z_i\in\sS(\Sigma_\sd)} (-1)^i e^t(\pi_{\tZ_i}) 
$$
By Lemma~\ref{lem:null_deformable}, we can eliminate the summands for those $Z$ in $\sS(\Lambda)$, and we are left with the following formula in $\pi_0\Gamma_B A^\%_B(W)\otimes\Q$. 
$$
e^t_\partial(p) = \displaystyle\sum_{Z_i\in\sS(\Sigma^\#_\sd)} (-1)^i e^t(\pi_{\tZ_i}) 
$$
\end{proof}

\subsection{Fiberwise Poincar\'e–Hopf theorem for the smooth structure characteristic}
\label{subsec:PHTheta}

In this section, we return to the setting of Section \ref{sec:2}, in which we have a topologically trivial family of smooth h-cobordisms $p:W\to B$ with boundaries $\partial_0W:=M$ and $\partial_1W:= M'$ given as smooth manifold bundles $p_0:M\to B$ and $p_1:M'\to B$. The bundle $p$ admits a fiberwise generalized Morse function $f:W\to [0,1]$. By Theorem \ref{thm:DWWpullback}, we have a canonical nullhomotopy of the relative excisive A-theory Euler characteristic $\chi^\%_\partial(p)$. It then follows by Proposition \ref{prop:DWW_index_thms} that we have a nullhomotopy of the Euler section $e^t_\partial(p)$. This nullhomotopy is used in the following definition.

\begin{defn}\label{defn:euler_H}
For $p$ a topologically trivial family of smooth h-cobordisms $p:W\to B$, the Euler section $e^{t/d}_\partial(p)$ is a map 
$$
S^0\xrightarrow{{e^{t/d}_\partial(p)}}\displaystyle\holim_{U\in \disk_k^{B/}}\displaystyle\holim_{V\in\disk_m^{p^{-1}U/}}
\Omega^\infty(V^\bullet\wedge \h(*))
$$
defined to be the lift of $e^d_\partial(p)$ obtained from the nullhomotopy of $e^t_\partial(p)$. 
\end{defn}

In the proof of the theorem below, we express this nullhomotopy in terms of $\Sigma_\sd$, the stratified subset from Subsection \ref{subsec:strat_def}. Recall that the stratified subset is concentrated in two degrees, and is obtained by applying a stratified deformation to the critical locus of $f$. 

\medskip For the remainder of this paper, we denote $e^{t/d}(\pi_\tZ)\in\pi_0\Gamma_B\h^\%_B(W)\otimes\Q$ to be the lift of $e^d(\pi_\tZ)$ resulting from the identity $\eta_* e^d(\pi_\tZ)=e^t(\pi_\tZ)=0$.

\begin{theorem}\label{thm:PHTheta}
For $p:W\to B$ a topologically trivial family of smooth h-cobordisms, the following equality holds in $\pi_0\Gamma_B\h^\%_B(W)\otimes\Q$
$$
e^{t/d}_\partial(p) = \displaystyle\sum_{Z_i\in\sS(\Sigma^\#_\sd)} (-1)^i e^{t/d}(\pi_{\tZ_i})
$$
\end{theorem}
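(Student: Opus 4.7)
The plan is to lift the rational identity of Corollary~\ref{cor:ratl_formula_d}, which lives in $\pi_0\Gamma_B Q_B(W)\otimes\Q$, to $\pi_0\Gamma_B\h^\%_B(W)\otimes\Q$ along the fiber sequence
\[
\Gamma_B\h^\%_B(W) \longrightarrow \Gamma_B Q_B(W) \xrightarrow{\eta} \Gamma_B A^\%_B(W).
\]
An element of $\pi_0\h^\%$ is specified by an element of $\pi_0 Q$ together with a nullhomotopy of its image under $\eta$, modulo indeterminacy from $\pi_1 A^\%$. Corollary~\ref{cor:ratl_formula_d} identifies the images in $\pi_0 Q \otimes \Q$ of the two sides of the claimed identity, so the argument reduces to showing that the two specified nullhomotopies of the common $A^\%$-theoretic class agree up to this indeterminacy after rationalization.

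Next I would unpack the two nullhomotopies being compared. On the left, the nullhomotopy of $e^t_\partial(p)$ is the canonical one produced from the fiberwise homeomorphism $h\colon W\to M\times I$ via Theorem~\ref{thm:DWWpullback} and Proposition~\ref{prop:DWW_index_thms}; this is exactly the data recorded by the definition of $e^{t/d}_\partial(p)$ in Definition~\ref{defn:euler_H}. On the right, the sum $\sum(-1)^i e^t(\pi_{\tZ_i})$ is nullhomotopic via the term-by-term identities $e^t(\pi_{\tZ_i}) = 0$ used to produce the individual lifts $e^{t/d}(\pi_{\tZ_i})$, assembled by formal addition. What must be shown is that these two nullhomotopies of a common underlying class match up to rational indeterminacy.

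To identify them, I would invoke the cube of Theorem~\ref{thm:PHcubeSD} together with Corollary~\ref{cor:ratl_formula_t}. The cube exhibits the $Q$-theoretic and $A^\%$-theoretic factorizations of $e^d_\partial(p)$ and $e^t_\partial(p)$ through the strata of $\Sigma_\sd$ as a single homotopy-coherent diagram whose vertical arrows are $\eta$. Applying $\eta$ to the $Q$-theoretic identity of Corollary~\ref{cor:ratl_formula_d} produces the $A^\%$-theoretic identity of Corollary~\ref{cor:ratl_formula_t} along with a specified homotopy between the two expressions for $e^t_\partial(p)$; taking homotopy fibers along $\eta$ at each vertex of the cube then realizes the comparison of the two nullhomotopies as a single rationally-trivial path in $\Gamma_B A^\%_B(W)$. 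The rationality and Lemma~\ref{lem:null_deformable} are used to discard the summands in the trivial stratum $\Lambda$ and to kill the residual indeterminacy coming from $\pi_1\Gamma_B A^\%_B(W)$.

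The main obstacle is precisely this coherence. The factorizations in Corollaries~\ref{cor:ratl_formula_d} and~\ref{cor:ratl_formula_t} only assert $\pi_0$-level identities after rationalization, but promoting the identity to $\pi_0\Gamma_B\h^\%_B(W)\otimes\Q$ requires compatibility one homotopical level higher. The cube of Theorem~\ref{thm:PHcubeSD} is built expressly to provide this: it relates the $d$- and $t$-factorizations through homotopies rather than bare equalities of classes, so that after passing to homotopy fibers of $\eta$ the induced lifts $e^{t/d}_\partial(p)$ and $\sum(-1)^i e^{t/d}(\pi_{\tZ_i})$ coincide rationally.
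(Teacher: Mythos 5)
Your high-level framing is right — you correctly identify the fiber sequence, the ingredient (Corollary~\ref{cor:ratl_formula_d}), and the potential obstruction (that a $\pi_0$ identity in $Q$-theory does not automatically lift to $\pi_0 \h^\%$ because of $\pi_1$ indeterminacy). But the route you propose is both more complicated than the paper's and has a genuine gap.

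The paper sidesteps the entire problem of comparing nullhomotopies that you set up. Because the fibration $\Gamma_B\h^\%_B(W)\to\Gamma_B Q_B(W)\to\Gamma_B A^\%_B(W)$ is split by the trace map, $\pi_1\Gamma_B Q_B(W)\to\pi_1\Gamma_B A^\%_B(W)$ is surjective, the connecting map $\pi_1 A^\%\to\pi_0\h^\%$ is zero, and so rationally one has a \emph{short exact sequence}
$$
0\to\pi_0\Gamma_B\h^\%_B(W)\otimes\Q\to\pi_0\Gamma_B Q_B(W)\otimes\Q\to\pi_0\Gamma_B A^\%_B(W)\otimes\Q\to0.
$$
Injectivity of the first map means a class in $\pi_0\h^\%\otimes\Q$ is \emph{determined by its $Q$-theoretic image}, so no comparison of the choices of nullhomotopy is needed at all: once Corollary~\ref{cor:ratl_formula_d} gives $e^d_\partial(p)=\sum(-1)^ie^d(\pi_{\tZ_i})$, any two lifts to $\pi_0\h^\%\otimes\Q$ of these equal classes must agree. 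Your proposal instead claims the indeterminacy from $\pi_1 A^\%$ is killed by ``rationality and Lemma~\ref{lem:null_deformable},'' but that lemma only controls the $\Lambda$ component of $\Sigma_\sd$ and has nothing to do with $\pi_1$; the splitting is the actual mechanism, and you never invoke it. Likewise, the cube of Theorem~\ref{thm:PHcubeSD} is not what the paper uses for this step and it is unclear that ``taking homotopy fibers along $\eta$ at each vertex'' produces the comparison you want without further argument.

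Separately, your proposal takes for granted the ``term-by-term identities $e^t(\pi_{\tZ_i})=0$.'' Those vanishings are not given; they must be established, and they are in fact a substantive part of the paper's proof: one applies Corollary~\ref{cor:ratl_formula_t} and the topological triviality to get $\sum(-1)^ie^t(\pi_{\tZ_i})=0$, then uses that $\Sigma^\#_\sd$ is concentrated in two consecutive degrees with trivial tangential data on the lower stratum (Lemmas~\ref{lem:SD1},~\ref{lem:SD2}) to force each $e^t(\pi_{\tZ_j})=0$ on the lower stratum, and finally uses the fact that the remaining summands are all in a single degree to conclude each $e^t(\pi_{\tZ_{j+1}})=0$ individually. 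Without this step, the lifts $e^{t/d}(\pi_{\tZ_i})$ you want to sum are not even defined.
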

\begin{proof}
Corollary \ref{cor:ratl_formula_d} indicates that the following equality holds in $\pi_0\Gamma_B Q_B(W)\otimes\Q$
$$
e^d_\partial(p) = \displaystyle\sum_{Z_i\in\sS(\Sigma^\#_\sd)} (-1)^i e^d(\pi_{\tZ_i}) 
$$
From Corollary \ref{cor:ratl_formula_t}, if we apply $\eta_*$ to both sides we have the following equality in $\pi_0\Gamma_B A^\%_B(W)\otimes\Q$
$$
e^t_\partial(p) = \displaystyle\sum_{Z_i\in\sS(\Sigma^\#_\sd)} (-1)^i e^t(\pi_{\tZ_i}) 
$$
Since $p$ is a topologically trivial family of smooth h-cobordisms, $e^t_\partial(p)=0$ in $\pi_0\Gamma_B A^\%_B(W)\otimes\Q$. Recall from Lemma \ref{lem:SD1} and Lemma \ref{lem:SD2} that the elements of $\sS(\Sigma^\#_\sd)$ are concentrated in two degrees, $j$ and $j+1$. Let $\mathscr{A}_j\subset\sS(\Sigma^\#_\sd)$ contain those elements of degree $j$ and let $\mathscr{A}_{j+1}\subset\sS(\Sigma^\#_\sd)$ contain those elements of degree $j+1$. Since $\psi_\sd$ is trivial on each element $Z_j$ in $\mathscr{A}_j$, for all such elements we have that $e^t(\pi_{\tZ_j})=0$. We are left with the equation
$$
\displaystyle\sum_{Z\in\mathscr{A}_{j+1}} e^t(\pi_{\tZ}) = 0 
$$
Since each $Z\in\mathscr{A}_{j+1}$ is of the same degree, there cannot be a relation among these characteristics. Thus, each $e^t(\pi_{\tZ})$ for $\tZ$ an upper stratum element in $\mathscr{A}_{j+1}$ must be 0. It follows that, for $\eta$ the unit map from $\S$ to $A(*)$, $\eta_* e^d_\partial(p)=e^t_\partial(p)=0$, and $\eta_* e^d(\pi_\tZ)=e^t(\pi_\tZ)=0$ for each $Z\in\sS(\Sigma_\sd^\#)$.   

Since the fibration $\Gamma_B\h_B^\%(W)\to\Gamma_B Q_B^\%(W)\to\Gamma_B A_B^\%(W)$ is split by the trace map, we have a short exact sequence

$$
0\to\pi_0\Gamma_B\h^\%_B(W)\otimes\Q\to\pi_0\Gamma_B Q_B(W)\otimes\Q\to\pi_0\Gamma_B A^\%_B(W)\otimes\Q\to0
$$

\medskip\noindent It then follows from Definition \ref{defn:euler_H} that 
$$
e^{t/d}_\partial(p) = \displaystyle\sum_{Z_j\in \mathscr{A}_j} (-1)^j e^{t/d}(\pi_{\tZ_i}) + \displaystyle\sum_{Z_{j+1}\in \mathscr{A}_{j+1}} (-1)^{j+1} e^{t/d}(\pi_{\tZ_{j+1}})
$$
The result follows since $\mathscr{A}_{j}\cup\mathscr{A}_{j+1}=\sS(\Sigma_\sd^\#)$
\end{proof}

\section{Calculations of the smooth structure class}
\label{sec:ThetaCalc}

In this section we prove the duality and vanishing theorems. We begin in subsection~\ref{subsec:setup_conj_pf} by reviewing the setup for the proofs of these theorems. In Subsection~\ref{subsec:duality_thm} we use Theorem~\ref{thm:PHTheta} to prove a duality theorem for the smooth structure class, Theorem~\ref{thm:duality}. In Subsection~\ref{subsec:conj_proof} we prove the vanishing theorem, Theorem~\ref{thm:rigidity}. We also give slight generalizations to manifolds with boundary in Theorem~\ref{thm:rigidity_relative} and Corollary~\ref{cor:rigidity_stable}.

\subsection{Setup for the proofs of the duality and vanishing theorems}
\label{subsec:setup_conj_pf}

Let $W$ be a smooth h-cobordism bundle over $B$ that is \textit{topologically trivial} as in Section ~\ref{sec:2}. 
This means that $p:W\to B$ is a smooth fiber bundle with two boundary components $\partial_0W:=M_0$ and $\partial_1W:=M_1$ so that $p_0:M_0\to B$ and $p_1:M_1\to B$ are smooth manifold bundles and there is a homeomorphism $h:W\to M\times I$ over $B$.
If the fibers of $M_0$ and $M_1$ themselves have boundary, then we additionally require that $\partial^\vee W$ is diffeomorphic to $M_0\cup \partial M_0\times I\cup M_0$.

Let $\chi^\%(W,\partial_0W)$ denote the \textit{relative excisive A-theory Euler characteristic} of $(W,\partial_0W)$, and let $\tr(W,\partial_0W)$ denote the \textit{relative Becker–Gottlieb transfer} of $(W,\partial_0W)$. 
Then the map $h$ provides a path from $\chi^\%(W,\partial_0W)$ to  $\chi^\%(M_0\times I, M_0)$. 
Concatenating this path with the homotopy from $\eta\circ\tr(W,\partial_0W)$ to $\chi^\%(W,\partial_0 W)$ supplied in~\cite{DWW03}, we have a nullhomotopy of the composition $\eta\circ\tr(W,\partial_0W)$. 
In Definition~\ref{defn:theta}, the \textit{relative smooth structure characteristic} $\theta(W,\partial_0W)\in \Gamma_B\H^\%_B(W)$ is defined to be the lift of $\tr(W,\partial_0W)$ determined by this nullhomotopy. 
This is summarized in the following diagram. 

$$
\xymatrix{
&&\Gamma_B\H_B^\%(W)\ar[dd]\\
\\
S^0\ar[rruu]^-{\theta(W,\partial_0W)}\ar[rr]^-{\tr(W,\partial_0W)}\ar[ddrr]_{\chi^\%(W,\partial_0W)}&&\Gamma_BQ_B (W)\ar[dd]^{\eta}\\
\\
&&\Gamma_BA_B^\%(W)
}
$$

\noindent We can similarly define $\theta(W,\partial_1W)$ in $\Gamma_B\H_B^\%(W)$.

\begin{prop}\label{prop:PDtheta}
Given a topologically trivial family of smooth h-cobordisms $p:W\to B$, the smooth structure characteristic $\theta(W,\partial_0W)$ is fiberwise Poincar\'e dual to the Euler section $e^{t/d}_\partial(p)$ of Definition~\ref{defn:euler_H}. 
\end{prop}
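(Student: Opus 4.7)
The plan is to show that the fiberwise Poincar\'e duality map $\p$ carries $e^{t/d}_\partial(p)$ to $\theta(W,\partial_0W)$. Since $\p$ is an equivalence and is natural with respect to the unit map $\eta:\S\to A(*)$, it induces a homotopy equivalence between the homotopy fiber of $\eta_*$ on the Euler section side (where $e^{t/d}_\partial(p)$ lives by Definition~\ref{defn:euler_H}) and the homotopy fiber of $\eta_*:\Gamma_BQ_B(W)\to\Gamma_BA^\%_B(W)$, which is $\Gamma_B\h^\%_B(W)$ (where $\theta(W,\partial_0W)$ lives). So there is no ambiguity in the assertion being made.

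First I would invoke Proposition~\ref{prop:DWW_index_thms} applied to $p$ and to the boundary bundle $p_0$, and combine via the retraction $r:W\to M$, to obtain canonical paths $\p e^d_\partial(p)\simeq \tr_\partial(p)$ in $\Gamma_BQ_B(W)$ and $\p e^t_\partial(p)\simeq \chi^\%_\partial(p)$ in $\Gamma_BA^\%_B(W)$. The naturality underlying the proofs of those DWW results also supplies a canonical higher homotopy rendering the resulting square (with vertical arrows $\eta$) commutative, so these two paths fit compatibly into the cube defining both lifts.

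Next I would identify the two nullhomotopies at play. The one used to construct $\theta(W,\partial_0W)$ is the concatenation of the DWW homotopy $\eta\tr_\partial(p)\simeq\chi^\%_\partial(p)$ with the nullhomotopy of $\chi^\%_\partial(p)$ induced by the fiberwise homeomorphism $h:W\to M\times I$. The nullhomotopy used to define $e^{t/d}_\partial(p)$ is its Poincar\'e dual, obtained from the nullhomotopy of $e^t_\partial(p)$ coming from $h$ via the topological-bundle homeomorphism invariance of the Becker--Euler section $e^t_n(\tau)$. The compatibility of these two nullhomotopies under $\p$ is precisely the content of Theorem 4.13 of~\cite{DWW03}, already invoked in the proof of Proposition~\ref{prop:DWW_vanishing}.

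Putting the pieces together, $\p$ sends the lift $e^{t/d}_\partial(p)$ to a lift of $\tr_\partial(p)$ along $\eta$ whose nullhomotopy of $\chi^\%_\partial(p)$ agrees with the one used in Definition~\ref{defn:theta}, so by the universal property of the homotopy fiber it is canonically identified with $\theta(W,\partial_0W)$. The main obstacle will be the second step: carefully articulating the nullhomotopy of $e^t_\partial(p)$ determined by the topological trivialization, which is only implicit in Definition~\ref{defn:euler_H} via the chain of equivalences through Theorem~\ref{thm:DWWpullback}, and matching it term by term with the nullhomotopy used to define $\theta$. This is essentially a bookkeeping exercise relying on the naturality of the constructions of Subsection~\ref{subsec:PHthms_setup} under fiberwise homeomorphism.
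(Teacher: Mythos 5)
Your proposal is correct and follows essentially the same route as the paper: identify both $\theta(W,\partial_0W)$ and $e^{t/d}_\partial(p)$ as lifts along $\eta$ determined by specific nullhomotopies, and transport one to the other via the Poincar\'e duality equivalence $\p$ using Proposition~\ref{prop:DWW_index_thms}. The paper's proof is a one-line citation of exactly these three ingredients (Definition~\ref{defn:theta}, Definition~\ref{defn:euler_H}, and Proposition~\ref{prop:DWW_index_thms}); your concern about matching the two nullhomotopies is legitimate but is in fact resolved by construction, since Definition~\ref{defn:euler_H} \emph{defines} the nullhomotopy of $e^t_\partial(p)$ as the Poincar\'e dual of the nullhomotopy of $\chi^\%_\partial(p)$, so the compatibility under $\p$ is built in rather than needing a separate appeal to naturality.
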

\begin{proof}
This follows by combining the definition of $\theta(W,\partial_0W)$ (\ref{defn:theta}) as the homotopy fiber of $\tr(W,\partial_0W)$ over $\chi^\%(W,\partial_0W)$, the definition of $e^{t/d}_\partial(p)$ as the homotopy fiber of $e^d_\partial(p)$ over $e^t_\partial(p)$,  and Proposition \ref{prop:DWW_index_thms}.
\end{proof}

\noindent To prove the vanishing theorem, we must compute $\Theta(W,\partial_0W)-\Theta(W,\partial_1W)$ for a topologically trivial h-cobordism bundle $W$. 

\subsection{A Duality Theorem for the smooth structure class}
\label{subsec:duality_thm}

In this section we prove a duality theorem for the smooth structure class using Theorem~\ref{thm:PHTheta}.
Recall how, in Section~\ref{sec:PHthms}, we made use of a fiberwise generalized Morse function $f:W\to [0,1]$ for which $f(\partial_0N)=0$ and $f(\partial_1N)=1$. 
In this section we will also make use of the fiberwise generalized Morse function $\overline{f} := 1-f$. 
In particular we will prove a duality theorem, Theorem~\ref{thm:duality}, by applying Theorem~\ref{thm:PHTheta} to $f$ and $\overline{f}$, and then comparing the results.

\begin{theorem}
\label{thm:duality}
For $p:W\to B$ a topologically trivial bundle of smooth h-cobordisms with fiber dimension $n$,
$$
\Theta(W,\partial_0W) = (-1)^{n-1}\Theta(W,\partial_1W)
$$
\end{theorem}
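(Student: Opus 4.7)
The plan is to apply Theorem~\ref{thm:PHTheta} twice, to two naturally related fiberwise generalized Morse functions on $W$, and to compare the resulting formulas. Let $f:W\to[0,1]$ be a fiberwise generalized Morse function with $f|_{\partial_0W}=0$ and $f|_{\partial_1W}=1$. By Proposition~\ref{prop:PDtheta}, $\theta(W,\partial_0W)$ is fiberwise Poincar\'e dual to $e^{t/d}_\partial(p)$, and Theorem~\ref{thm:PHTheta} expresses the latter rationally as a signed sum $\sum_i (-1)^i e^{t/d}(\pi_{\tilde Z_i})$ over strata of a stratified deformation of $\Sigma_f$. This pins down $\Theta(W,\partial_0W)$.

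Next, consider the inverted function $\bar f=1-f$. This is itself a fiberwise generalized Morse function, with $\bar f|_{\partial_1W}=0$ and $\bar f|_{\partial_0W}=1$, and whose critical locus agrees with $\Sigma_f$ pointwise. Because $\mathrm{Hess}(\bar f)=-\mathrm{Hess}(f)$, the positive and negative eigenspace bundles are swapped, so a critical point of index $i$ for $f$ has index $n-i$ for $\bar f$. A stratified deformation of $\Sigma_f$ therefore induces one of $\Sigma_{\bar f}$ by composing the classifying map $\psi:\Sigma\to BO\times BO$ with the coordinate swap and relabeling degrees via $i\mapsto n-i$; in particular the upper and lower strata of the immersed lenses provided by Lemma~\ref{lem:SD2} are interchanged. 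Applying Theorem~\ref{thm:PHTheta} to $\bar f$ then computes $\Theta(W,\partial_1W)$ as a sum over the same strata, now weighted by $(-1)^{n-i}$ in place of $(-1)^i$.

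Since the local Euler contributions $e^{t/d}(\pi_{\tilde Z})$ depend only on the tangential data at the stratum and not on the Morse-theoretic index labeling of the ambient function, the terms in the two sums coincide. Combining the sign change $(-1)^i \leftrightarrow (-1)^{n-i}$ with the Poincar\'e duality translation of Proposition~\ref{prop:PDtheta} then yields the desired identity $\Theta(W,\partial_0W)=(-1)^{n-1}\Theta(W,\partial_1W)$.

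The main obstacle is a careful accounting of signs. Verifying that the stratified deformation for $f$ can be canonically transported to one for $\bar f$ without altering the local Euler contributions is essentially mechanical given the construction in Subsection~\ref{subsec:strat_def}. The delicate point is reconciling the naive count (which contributes an overall sign $(-1)^n$ from the substitution $i\mapsto n-i$) with the expected $(-1)^{n-1}$; the remaining factor of $-1$ must be contributed by the interplay of the relative conventions used in the definitions of $\tr_\partial$ and $\chi^\%_\partial$ relative to a chosen boundary with the fiberwise Poincar\'e duality isomorphism relating $e^{t/d}_\partial(p)$ to $\theta$, and this is where the most attentive bookkeeping is required.
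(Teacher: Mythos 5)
Your strategy matches the paper's: apply Theorem~\ref{thm:PHTheta} to both $f$ and $\bar f = 1-f$, and compare the resulting signed sums over strata. However, there is a genuine gap in your sign accounting, and you correctly suspect something is off --- the ``naive count'' does indeed give the wrong sign, and the discrepancy is not absorbed by the boundary conventions or the Poincar\'e duality identification.

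The problem is your assumption that the stratified subset $\Sigma_\sd$ can be transported to the setting of $\bar f$ ``by composing the classifying map $\psi\colon\Sigma\to BO\times BO$ with the coordinate swap and relabeling degrees via $i\mapsto n-i$.'' This relabeled object is not a valid input to Theorem~\ref{thm:PHTheta}. Recall that Lemma~\ref{lem:SD2} guarantees the tangential data is trivial on the \emph{lower} stratum of each immersed lens, and the proof of Theorem~\ref{thm:PHTheta} uses this crucially to deduce that $e^t(\pi_{\tZ_j})=0$ on those strata. If you merely relabel degrees via $i\mapsto n-i$, the stratum that was upper (degree $i$, nontrivial $\psi$) becomes lower (degree $n-i$), while the stratum that was lower (degree $i-1$, trivial $\psi$) becomes upper (degree $n-i+1$). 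The result has nontrivial tangential data on the \emph{lower} stratum, so the hypothesis fails, and the formula of Theorem~\ref{thm:PHTheta} is not available.

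Instead, the paper re-runs the stratified deformation construction of Subsection~\ref{subsec:strat_def} on $(\Sigma_{\bar f},\psi_{\bar f})$, obtaining a lens decomposition $(\overline{\Sigma_\sd},\overline{\psi_\sd})$ in which the lower stratum is again the one with trivial tangential data. Under the resulting correspondence, the lens $L_{i-1}(Z,\psi_{i-1},\psi_i)$ for $f$ matches the lens $L_{n-i}(Z,\psi_{n-i},\psi_{n-(i-1)})$ for $\bar f$, and in particular the degree-$i$ \emph{upper} stratum corresponds to the degree-$(n-(i-1)) = n-i+1$ upper stratum, not the degree-$(n-i)$ stratum. Since the local terms $e^{t/d}(\pi_{\tZ})$ agree (the Euler section depends only on $T^\vee W|_Z \cong \gamma_f\oplus\gamma_{-f}\ominus\epsilon^n$, which is insensitive to the swap of eigenspace bundles), the ratio of signs is $(-1)^{n-(i-1)}/(-1)^i = (-1)^{n+1} = (-1)^{n-1}$ --- the off-by-one in the lens index is exactly the missing $-1$. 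The analogous computation on lower strata gives $(-1)^{n-i}/(-1)^{i-1} = (-1)^{n-1}$ as well, so the whole sum is scaled uniformly. So your proof is not wrong in spirit, but the sign bookkeeping you flag as ``delicate'' cannot be resolved by convention-chasing: it requires tracking how the stratified deformation shifts the lens indexing by one, which is where the $(-1)^{n-1}$ (rather than $(-1)^n$) actually arises.
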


\begin{proof}[Proof of Theorem~\ref{thm:duality}]

From Subsection~\ref{subsec:strat_def}, we have a stratified deformation of the stratified subset $(\Sigma_f,\psi_f)$ to the stratified subset concentrated in two degrees $(\Sigma_\sd,\psi_\sd)$. Applying this stratified deformation instead to $(\Sigma_{\overline{f}},\psi_{\overline{f}})$, we obtain the stratified deformation $(\overline{\Sigma_\sd},\overline{\psi_\sd})$. We take care to point out that while $\Sigma_\sd$ and $\overline{\Sigma_\sd}$ are diffeomorphic, a particular stratum of $\sS(\Sigma_\sd)$ may correspond to a stratum of a different degree in $\overline{\Sigma_\sd}$, and the bundles $\psi_\sd$ and $\overline{\psi_\sd}$ are only equivalent after applying the swap map $BO\times BO\to BO\times BO$, since the positive and negative eigenspaces have been exchanged. 

\medskip

\noindent We begin by applying Theorem~\ref{thm:PHTheta} to $f$ and $\overline{f}$ to obtain the following formulas. 

\begin{equation}\label{eqn:posf}
e^{t/d}_{\partial_0}(p) = \displaystyle\sum_{Z_i\in\sS(\Sigma^\#_\sd)} (-1)^i e^{t/d}(\pi_{\tZ_i})
\end{equation}

\begin{equation}\label{eqn:negf}
    e^{t/d}_{\partial_1}(p) = \displaystyle\sum_{Z_j\in\sS(\overline{\Sigma^\#_\sd})} (-1)^j e^{t/d}(\pi_{\tZ_i})
\end{equation}


The stratified subsets $(\Sigma_{SD},\psi_{SD})$ and $(\overline{\Sigma_{SD}},\overline{{\psi}_{SD}})$ are each disjoint unions of immersed lenses concentrated in two degrees, and a component on which the tangential data is trivial. 
We do not consider this extra component because it has a trivial rational contribution, according to Lemma~\ref{lem:null_deformable}.
Because the critical loci of $f$ and $\overline{f}$ are identical, the submanifolds $\Sigma_{SD}$ and $\overline{\Sigma_{SD}}$ constructed from the same stratified deformation are identical. 
In particular, there is a one-to-one correspondence between immersed lenses comprising $(\Sigma_{SD},\psi_{SD})$ and $(\overline{\Sigma_{SD}},\overline{{\psi}_{SD}})$. 
It suffices to consider one such pair, and observe how the bundle data $\psi_{SD}$ and $\overline{{\psi}_{SD}}$ has changed. 
Below we depict $L_{i-1}(Z,\psi_{i-1},\psi_i)$, an immersed lens belonging to $(\Sigma_{SD},\psi_{SD})$. 

\begin{center}
\begin{figure}[H]
\begin{overpic}[abs,width=8cm]{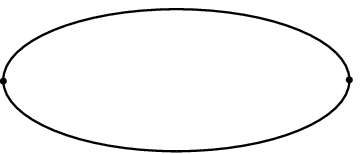}
\put(-100,45){$L_{i-1}(Z,\psi_{i-1},\psi_i)$}
\put(-30,80){degree $i$}
\put(-30,10){degree $i-1$}
\put(220,10){$\psi_{i-1}=*$}
\put(220,80){$\psi_{i}$}
\put(110,10){$Z$} 
\put(110,80){$Z$}
\end{overpic}
\end{figure}
\end{center}

\noindent The immersed lens $L_{i-1}(Z,\psi_{i-1},\psi_i)$ above corresponds to the immersed lens $L_{n-i}(Z,\psi_{n-i},\psi_{n-(i-1)})$ belonging to $(\overline{\Sigma_{SD}},\overline{{\psi}_{SD}})$, depicted below.

\begin{center}
\begin{figure}[H]
\begin{overpic}[abs,width=8cm]{immersed_lens.eps}
\put(-125,45){$L_{n-i}(Z,\psi_{n-i},\psi_{n-(i-1)})$}
\put(-65,80){degree $n-(i-1)$}
\put(-65,10){degree $n-i$}
\put(220,10){$\psi_{n-i}=*$}
\put(220,80){$\psi_{n-(i-1)}$}
\put(110,10){$Z$}
\put(110,80){$Z$}
\end{overpic}
\end{figure}
\end{center}

Recall that after the stratified deformation of Lemma~\ref{lem:SD1}, the bundle data on the lower stratum is trivial, so $\psi_{i-1}$ and $\psi_{n-i}$ are trivial as in the diagrams above. 
On the upper strata, $\psi_{SD}$ is the map $Z\to BO\times BO$ classifying the stable negative and positive eigenspace bundles, $\gamma_f$ and $\gamma_{-f}$, respectively. These stable bundles $\gamma_f$ and $\gamma_{-f}$ have the property that $\gamma_f\oplus\gamma_{-f}\cong T^\vee M|_Z\oplus \epsilon^n$. 
The section $e^{t/d}(\pi_\tZ)$ is defined in terms of the restriction of the vertical tangent bundle $T^\vee M|_Z$.
Thus the summand $(-1)^i e^{t/d}(\pi_{\tZ_i})$ for $Z\in\sS(\Sigma_\sd^\#)$ is the same, up to a sign, as the summand $(-1)^{n-(i-1)}e^{t/d}(\pi_{\tZ_{n-(i-1)}})$ with $Z_{n-(i-1)}$ in $\overline{\Sigma_\sd^\#}$. It then follows that 

$$
e^{t/d}_{\partial_0}(p) = (-1)^{n-1}e^{t/d}_{\partial_1}(p)
$$

\noindent Taking Poincar\'e duals on both sides gives the result in the theorem statement.
\end{proof}

\subsection{Proof of the vanishing theorem}
\label{subsec:conj_proof}
We can now prove the Vanishing Theorem.

\begin{theorem}\label{thm:rigidity}
If the fibers of $p_0:M\to B$ are even dimensional and closed, then for a topologically trivial family of smooth h-cobordisms $W$ from $M$ to $M'$, $\Theta(W,M)-\Theta(W,M')$ is trivial. 
\end{theorem}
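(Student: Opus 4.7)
The plan is to deduce the Vanishing Theorem as an immediate corollary of the Duality Theorem (Theorem~\ref{thm:duality}) via a simple parity argument on the fiber dimension. First I would observe that since $W$ is a bundle of smooth h-cobordisms from $M$ to $M'$, the fiber dimension $n$ of $p:W\to B$ satisfies $n=d+1$, where $d$ is the common dimension of the closed, even-dimensional fibers of $p_0:M\to B$ and $p_1:M'\to B$. The hypothesis that $d$ is even therefore forces $n$ to be odd, so $n-1$ is even and $(-1)^{n-1}=1$.

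Next I would invoke the Duality Theorem, which under this parity collapses to the equality $\Theta(W,\partial_0 W) = \Theta(W,\partial_1 W)$ in $\pi_0\Gamma_B \h^\%_B(W)\otimes\Q$. Since $\partial_0W=M$ and $\partial_1W=M'$ by definition, this is precisely the claim $\Theta(W,M)=\Theta(W,M')$, so their difference is zero and hence trivial, as required.

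There is no genuine technical obstacle at this stage: all the substantive work has been absorbed into Theorem~\ref{thm:duality}, whose proof in turn rests on applying Theorem~\ref{thm:PHTheta} to a fiberwise generalized Morse function $f$ and to $\overline f=1-f$ and then tracking how the stratified subset $(\Sigma_\sd,\psi_\sd)$ transforms under the swap of negative and positive eigenspace bundles on each immersed lens. The only book-keeping one has to be careful about in the present proof is the identification of the section spaces $\Gamma_B\h^\%_B(M)$, $\Gamma_B\h^\%_B(W)$, and $\Gamma_B\h^\%_B(M')$ induced by the fiberwise homotopy equivalences $M\hookrightarrow W$ and $M'\hookrightarrow W$, so that the equation produced by Theorem~\ref{thm:duality} is correctly interpreted as a vanishing statement in whichever single rational group one prefers to work in; given that $W$ is topologically trivial and hence fiberwise homotopy equivalent to both boundary bundles, this identification is routine.
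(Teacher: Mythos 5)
Your proposal is correct and follows the same route as the paper: both deduce the vanishing theorem immediately from Theorem~\ref{thm:duality}, observing that if the fiber dimension of $M$ is even then the fiber dimension $n$ of the h-cobordism bundle $W$ is odd, so $(-1)^{n-1}=1$ and the two smooth structure classes coincide. Your added remarks on the identification of section spaces via the fiberwise homotopy equivalences are sensible but not needed beyond what the paper takes as understood.
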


\begin{proof}
When the fibers of $M$ are even dimensional, $n-1$ is even. 
Thus, by Theorem~\ref{thm:duality}, $\Theta(W,\partial_0W) = \Theta(W,\partial_1W)$. 
\end{proof}

We might also consider the relative case, where the boundaries $\partial_0W$ and $\partial_1W$ have corners.
Recall that in this case we consider h-cobordisms $W$ from $M$ to $M'$ so that $\partial^\vee W$ is diffeomorphic to $M\cup \partial M\times I \cup M'$. 

\begin{theorem}\label{thm:rigidity_relative}
If $M$ has boundary and the fiber dimension of $M$ is even, then for any topologically trivial family of smooth h-cobordisms $W$ from $M$ to $M'$ with $\partial^\vee W$ diffeomorphic to $M\cup \partial M\times I \cup M'$, the smooth structure class $\Theta(W,M)-\Theta(W,M')$ is trivial. 
\end{theorem}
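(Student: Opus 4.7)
The plan is to deduce Theorem~\ref{thm:rigidity_relative} as a direct corollary of Theorem~\ref{thm:duality}, exactly as Theorem~\ref{thm:rigidity} was deduced, once we know that Theorem~\ref{thm:duality} holds for h-cobordism bundles whose fibers are manifolds with corners of the specified form. The fiber dimension of $W$ is $n$ and that of $M$ is $n-1$; by hypothesis $n-1$ is even, so $(-1)^{n-1}=1$, and duality will force $\Theta(W,M)=\Theta(W,M')$, whence the difference vanishes.

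The substantive step is therefore to verify that Theorem~\ref{thm:duality}, together with its input Theorem~\ref{thm:PHTheta}, goes through in the corner setting. My strategy is to choose a fiberwise generalized Morse function $f\colon (W,\partial_0W)\to (I,0)$ adapted to the corner structure, in the sense that $f$ restricted to the side boundary $\partial M\times I\subset\partial^\vee W$ is the projection onto $I$. Such an $f$ can be produced by defining $f$ first on a collar of $\partial^\vee W$ using the obvious projection on $\partial M\times I$ (which has no fiberwise critical points), and then extending inward using the existence results of Igusa, Lurie, and Eliashberg--Mishachev for fiberwise GMFs applied relative to this boundary data. All critical points of the resulting $f$ lie in the interior of $W$, so the critical locus $\Sigma_f$ and its ghost sets live away from $\partial^\vee W$. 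Consequently the transversality computation of Section~\ref{subsec:gmf_trans}, the Poincar\'e--Hopf theorems of Section~\ref{sec:PHthms}, and the stratified deformation machinery of Subsection~\ref{subsec:strat_def} apply verbatim, and we obtain the Euler section $e^{t/d}_\partial(p)\in\pi_0\Gamma_B\h^\%_B(W)\otimes\Q$ together with the identity of Theorem~\ref{thm:PHTheta}.

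With $f$ and $\bar f=1-f$ in hand (and $\bar f$ still adapted to the corner structure, via the projection $1-t$ on $\partial M\times I$), the argument in the proof of Theorem~\ref{thm:duality} comparing the stratified subsets $(\Sigma_{SD},\psi_{SD})$ and $(\overline{\Sigma_{SD}},\overline{\psi_{SD}})$ produces the identity $e^{t/d}_{\partial_0}(p)=(-1)^{n-1}\,e^{t/d}_{\partial_1}(p)$ in $\pi_0\Gamma_B\h^\%_B(W)\otimes\Q$; dualizing via Proposition~\ref{prop:PDtheta} gives $\Theta(W,\partial_0W)=(-1)^{n-1}\Theta(W,\partial_1W)$, and the parity hypothesis $n-1$ even finishes the proof. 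The one step I expect to require care is the relative existence of a fiberwise GMF adapted to the corner structure; this should be a routine application of the h-principle for GMFs relative to $\partial^\vee W$, and would probably be packaged as a short lemma invoking the cited existence theorems, but it is the only place where the passage from closed to cornered boundary is not automatic.
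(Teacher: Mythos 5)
Your proposal is correct and takes exactly the approach the paper does: the paper's proof of Theorem~\ref{thm:rigidity_relative} is the one-line remark ``The proof is the same as the proof of Theorem~\ref{thm:rigidity},'' i.e.\ invoke Theorem~\ref{thm:duality} and use that $n-1$ is even so $(-1)^{n-1}=1$. Where you go beyond the paper is in explicitly flagging that the duality theorem and the underlying Poincar\'e--Hopf machinery must be checked to hold for a bundle of h-cobordisms with corners, and in sketching how to construct a fiberwise GMF adapted to $\partial^\vee W$ (projection on the side boundary $\partial M\times I$, extension relative to that collar via the cited h-principle results, critical locus confined to the interior). The paper leaves this entirely implicit, so your elaboration is a genuine, if modest, improvement in rigor; it does not change the proof strategy.
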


\begin{proof}
The proof is the same as the proof of Theorem~\ref{thm:rigidity}.
\end{proof}

\noindent Applying the two theorems above to a bundle $M$ with closed fibers, we obtain the following corollary. 

\begin{corollary}\label{cor:rigidity_stable}
If $n+k$ is even and $k\ge 0$, then for any topologically trivial family of smooth h-cobordisms $W$ from $M\times I^k$ to $M'\times I^k$ satisfying the condition $\partial^\vee W =M\times I^k\cup \partial (M\times I^k)\times I \cup M'\times I^k$, the smooth structure class $\Theta(W,M'\times I^{k})-\Theta(W,M'\times I^{k})$ is trivial. 
\end{corollary}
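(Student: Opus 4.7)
The plan is to observe that Corollary~\ref{cor:rigidity_stable} follows immediately from Theorems~\ref{thm:rigidity} and~\ref{thm:rigidity_relative} once we reinterpret the data. The point is that the bundle $M \times I^k \to B$ is itself a smooth manifold bundle whose fiber dimension is $n+k$, which is even by hypothesis. So we should verify that $W$, viewed as a topologically trivial family of smooth h-cobordisms between $M \times I^k$ and $M' \times I^k$, satisfies the hypotheses of either Theorem~\ref{thm:rigidity} or Theorem~\ref{thm:rigidity_relative}, depending on whether $k = 0$ or $k \ge 1$.

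First I would dispose of the case $k = 0$. Here $M \times I^0 = M$ has closed fibers of even dimension $n$, and the corollary reduces to the statement $\Theta(W,M) - \Theta(W,M') = 0$, which is exactly Theorem~\ref{thm:rigidity}. For the case $k \ge 1$, the fibers of $M \times I^k$ are compact manifolds with nonempty boundary of dimension $n+k$, which is even by assumption. The vertical boundary condition in the hypothesis, $\partial^\vee W = M \times I^k \cup \partial(M \times I^k) \times I \cup M' \times I^k$, is precisely the condition required for Theorem~\ref{thm:rigidity_relative} to apply to $W$ as a family of h-cobordisms with boundary. Invoking that theorem yields $\Theta(W, M \times I^k) - \Theta(W, M' \times I^k) = 0$.

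The only genuine content in this argument is recognizing that ``stabilizing" $M$ by taking products with $I^k$ does not create any difficulty for the vanishing theorem: one merely absorbs the extra $k$ dimensions into the ambient fiber, and the parity condition on the fiber dimension in Theorem~\ref{thm:rigidity_relative} is satisfied precisely when $n+k$ is even. Consequently there is no real obstacle in the proof; the content is entirely in the theorems it cites. The statement is phrased as a corollary rather than folded into Theorem~\ref{thm:rigidity_relative} because it highlights the \emph{stable} formulation of the rigidity statement that is most natural when comparing to the language of stable smoothings used in~\cite{GIW14,GI14}, as discussed in Subsection~\ref{subsec:comparison_to_GIW}.
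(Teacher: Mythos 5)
Your proof is correct and matches the paper's own (implicit) argument: the paper simply states that the corollary follows by applying Theorems~\ref{thm:rigidity} and \ref{thm:rigidity_relative} to the bundle with fibers $F\times I^k$, and your case split between $k=0$ (closed fibers, Theorem~\ref{thm:rigidity}) and $k\ge 1$ (fibers with boundary, Theorem~\ref{thm:rigidity_relative}) is exactly that. As a minor note, you implicitly and correctly fixed a typo in the statement, which should read $\Theta(W,M\times I^k)-\Theta(W,M'\times I^k)$ rather than having $M'$ twice.
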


\begin{rmk}
Note that the dependence on $k$ in the corollary above implies that the statement applies just as well when $n$ is odd and $k$ is odd. 
We explain why this does not contradict the constructions in~\cite{GI14}, which produce topologically trivial h-cobordisms whose boundaries are bundles with closed odd dimensional fibers. 
If we take such a bundle and stabilize once by multiplying with an interval, we produce an h-cobordism, but the boundary is not a product h-cobordism, and thus Theorem~\ref{thm:rigidity_relative} does not apply. 
It may be helpful to note that the difference $\Theta(W,\partial_0W)-\Theta(W,\partial_1W)$ is not a stable invariant with respect to the upper and lower stabilization maps on the h-cobordism space, which maintain a product structure on the boundary. 
In particular, $\Theta(W,\partial_0W)$ is an invariant of the lower stabilization map, and $\Theta(W,\partial_1W)$ is an invariant of the upper stabilization map. 
Still, the difference $\Theta(W,\partial_0W)-\Theta(W,\partial_1W)$ is not an invariant of either stabilization map, but is only an invariant of the stabilization which multiplies the entire h-cobordism by an interval.
Theorem \ref{thm:rigidity_relative} does not apply to h-cobordisms obtained from that type of stabilization.
\end{rmk}

\newpage
\bibliography{ref}
\bibliographystyle{alpha}
\nocite{*}

\end{document}